\documentclass[12pt]{amsart}
\usepackage[T1]{fontenc}
\usepackage[utf8]{inputenc}
\usepackage{lmodern}
\usepackage{microtype}
\microtypesetup{expansion=false}
\usepackage{amsaddr}

\usepackage{amsmath,amssymb,amsthm,mathrsfs}

\usepackage{booktabs}
\usepackage{longtable}
\usepackage{tikz-cd}
\usepackage{placeins}
\usepackage{caption}
\usepackage{xcolor}
\usepackage{array}
\usepackage{tabularx}
\usepackage{etoolbox}
\newcolumntype{C}[1]{>{\centering\arraybackslash}p{#1}}
\newcolumntype{L}[1]{>{\raggedright\arraybackslash}p{#1}}
\newcolumntype{R}[1]{>{\raggedleft\arraybackslash}p{#1}}

\AtBeginEnvironment{tabular}{\footnotesize}
\AtBeginEnvironment{tabularx}{\footnotesize}
\AtBeginEnvironment{longtable}{\footnotesize}

\usepackage[colorlinks=true,
            linkcolor=red!50!black,
            citecolor=red!70!black,
            urlcolor=blue!80!black]{hyperref}
\usepackage[nameinlink,capitalise]{cleveref}

\numberwithin{equation}{section}

\newtheorem{thm}{Theorem}[section]
\newtheorem{lem}[thm]{Lemma}
\newtheorem{prop}[thm]{Proposition}
\newtheorem{cor}[thm]{Corollary}
\theoremstyle{definition}
\newtheorem{rem}[thm]{Remark}
\newtheorem{defn}[thm]{Definition}
\newtheorem{ex}[thm]{Example}
\theoremstyle{plain}

\DeclareMathOperator{\Aut}{Aut}
\DeclareMathOperator{\Orb}{Orb}
\DeclareMathOperator{\Stab}{Stab}
\DeclareMathOperator{\Sym}{Sym}
\DeclareMathOperator{\Hol}{Hol}
\DeclareMathOperator{\GL}{GL}
\DeclareMathOperator{\Cay}{Cay}
\DeclareMathOperator{\SCM}{SCM}
\DeclareMathOperator{\CB}{CB}
\DeclareMathOperator{\Conj}{Conj}
\DeclareMathOperator{\Orth}{Orth}
\DeclareMathOperator{\Perm}{Perm}

\newcommand{\F}{\mathbb{F}}
\newcommand{\Mat}{\mathcal{M}}

\newcommand{\Dp}{\Delta^{+}}
\newcommand{\Dm}{\Delta^{-}}
\newcommand{\Dc}{\Delta^{c}}

\newcommand{\Rt}[1]{R_{#1}}
\newcommand{\conjg}[1]{\gamma_{#1}}

\title{Colouring bijections of finite $3$-groups}

\author{Piotr Grzeszczuk}

\address{
Faculty of Computer Science\\
Bialystok University of Technology\\
Wiejska 45A\\
15-351 Białystok\\
Poland
}

\email{p.grzeszczuk@pb.edu.pl}

\thanks{This work was supported by Grant WZ/WI-IIT/2/2025 from the Bialystok University of Technology and funded by the Ministry of Science and Higher Education of Poland through resources allocated for research.}

\keywords{complete mapping, strong complete mapping, colouring bijection, mutually orthogonal Latin squares, Cayley graph, chromatic number, finite $3$-group, holomorph, quandle}

\subjclass[2020]{05B15, 05C15, 05C25, 20D15}

\begin{document}

\begin{abstract}
Let $G$ be a finite group.  A bijection $\sigma\colon G\to G$ is a \emph{colouring
bijection} if the three maps
\[
\Dp\colon x\mapsto x\,\sigma(x),
\qquad
\Dm\colon x\mapsto x^{-1}\sigma(x),
\qquad
\Dc\colon x\mapsto \sigma(x)^{-1}x\,\sigma(x)
\]
are again bijections of $G$.  The first two conditions say that $\sigma$ is a strong
complete mapping.  The third is a genuinely nonabelian requirement.

Our main theorem is that every noncyclic $3$-group not isomorphic to the modular group
$M_{3^{r}}$ $(r\ge4)$ admits a colouring bijection.  This is the exact analogue, for
colouring bijections, of the theorem of Akhtar and Gagola on strong complete mappings.

The notion has three equivalent readings.  A colouring bijection properly colours the
Cayley graph $\mathscr{G}_3(G)=\Cay(G^{3},\mathbf S_3)$ with $|G|$ colours.  It
determines a triple of mutually orthogonal translation Latin squares based on $G$.  In
the orthomorphism graph of $G$ that triple is a triangle through the Cayley table.  It
also determines a common transversal of three arrays attached to $G$.  These are the
multiplication table, the division table, and the operation table of the conjugation
quandle.  The last of them is not a Latin square.

Two consequences follow.  Every noncyclic $3$-group $G\not\cong M_{3^{r}}$ $(r\ge4)$
carries three mutually orthogonal Latin squares of order $|G|$ based on $G$.  Moreover
$\chi(\mathscr{G}_3(G))=|G|$ for every such group.
\end{abstract}

\maketitle

\section*{Introduction}

Let $G$ be a finite group.  A bijection $\sigma\colon G\to G$ is a \emph{complete
mapping} if $\Dp_\sigma\colon x\mapsto x\sigma(x)$ is again a bijection.  It is an \emph{orthomorphism}
if $\Dm_\sigma\colon x\mapsto x^{-1}\sigma(x)$ is a bijection.  The two notions are equivalent under the
correspondence $\sigma\mapsto\Dp_\sigma$. Indeed,
$x^{-1}\Dp_\sigma(x)=\sigma(x)$, so $\sigma$ is a complete mapping if and only if
$\Dp_\sigma$ is an orthomorphism.

Complete mappings are the algebraic form of a combinatorial notion.  Let $L(r,c)=rc$ be
the Cayley table of $G$, a Latin square of order $|G|$, and call a set of $|G|$ cells, one
in each row, one in each column, carrying pairwise distinct symbols, a \emph{transversal}
of $L$.  The cells $\{(r,\sigma(r)):r\in G\}$ form a transversal exactly when $\sigma$ is
a complete mapping, and the translates $\{(r,\sigma(r)g)\}$, $g\in G$, then split the
table into $|G|$ disjoint transversals.  Hence $L$ has an orthogonal mate if and only if
$G$ admits a complete mapping (see \cite{E3} for a systematic account).

Which groups admit complete mappings is the content of the Hall--Paige theorem. Hall and Paige \cite{HP}
proved that a group with a nontrivial cyclic Sylow $2$-subgroup admits no complete
mapping. They conjectured the converse. Wilcox \cite{W} reduced the conjecture to the
simple groups. Evans \cite{E1} settled it for the sporadic groups, with the exception of
$J_4$. Bray completed the proof, see \cite{BCCSZ}. A finite group therefore admits a
complete mapping if and only if its Sylow $2$-subgroups are trivial or noncyclic.

A bijection that is at once a complete mapping and an orthomorphism is a \emph{strong
complete mapping}. The term was introduced by Hsu and Keedwell in 1984
\cite{HsuKeedwell1984,HsuKeedwell1985}. Earlier the same maps were called \emph{strong
orthomorphisms} \cite{Anderson1984} or \emph{strong permutations} \cite{Horton1990}. In
the context of block designs one also meets the name \emph{starter-adder pair}.

The existence question was
settled for abelian groups in \cite{E2}, with further constructions in \cite{BGZ}, and for
nonabelian $3$-groups and $2$-groups in \cite{AG} and \cite{ACG}, respectively.

\medskip

\noindent\textbf{The graphs $\mathscr{G}_m(G)$.}
For $m\ge 2$, Bagi\'{n}ski and Grzeszczuk \cite{BG} introduced the Cayley graphs
\[
\mathscr{G}_m(G)=\Cay(G^m,\mathbf S_m),
\]
whose connection set is built as follows.  For $\varnothing\neq A\subseteq\{1,\dots,m\}$
and $g\in G$ let $v_A(g)\in G^m$ be the vector with $i$-th coordinate $g$ for $i\in A$ and
$e$ otherwise, and call $A$ an \emph{interval} if $A=\{i,i+1,\dots,j\}$ for some
$i\le j$.  Then
\[
\mathbf S_m=\{\,v_A(g)\ :\ A\ \text{an interval},\ g\in G\setminus\{e\}\,\},
\]
so that $\mathbf S_m$ splits into $m(m+1)/2$ families, each of size $|G|-1$.  In
particular
\[
\mathbf S_2=\{(g,e),\ (e,g),\ (g,g)\},
\]
\[
\mathbf S_3=\{(g,e,e),\ (e,g,e),\ (e,e,g),\ (g,g,e),\ (e,g,g),\ (g,g,g)\},
\]
with $g$ ranging over $G\setminus\{e\}$. The vectors $(g,e,g)$ are absent because
$\{1,3\}$ is the unique nonempty subset of $\{1,2,3\}$ that is not an interval.  The
family $\mathscr{G}_m(G)$ arose in \cite{BG} in connection with a ring-theoretic problem,
where the motivation for its study is explained in detail.  These graphs determine $G$:
it is shown in \cite{BG} that $\mathscr{G}_m(G)\cong\mathscr{G}_m(H)$ if and only if
$G\cong H$.

\medskip

\noindent\textbf{From $m=2$ to $m=3$.}
The graph $\mathscr{G}_2(G)$ is the Latin square graph of the Cayley table. It is the
model for everything that follows. The family $(g,g)$ acts on $G^2$ with $|G|$ orbits. The
invariant of an orbit is $r^{-1}c$. Call $r^{-1}c$ the \emph{symbol} of the cell $(r,c)$.
This turns $G^2$ into the Latin square
\[
L^{-}(r,c)=r^{-1}c ,
\]
the left-division parastrophe of $L$. The three families of $\mathbf S_2$ then become the
three ways in which two cells of a Latin square can conflict: same column, same row, same
symbol. Rows, columns and symbol classes are cliques of size $|G|$. Hence
$\chi(\mathscr{G}_2(G))\ge|G|$. On the other hand, $c(x,y)=x^{-1}\tau(y)$ is a proper
colouring with $|G|$ colours precisely when $\tau$ and $y\mapsto y^{-1}\tau(y)$ are
bijections. That is, precisely when $\tau$ is an orthomorphism. So
$\chi(\mathscr{G}_2(G))=|G|$ if and only if $G$ admits an orthomorphism, equivalently a
complete mapping. By
Hall--Paige, this holds if and only if the Sylow $2$-subgroups of $G$ are trivial or
noncyclic.

The underlying mechanism is a count of the conditions imposed by each family of
generators. There are two axial families, $(g,e)$ and $(e,g)$. They force a single
property: bijectivity of $\tau$. There is one diagonal family, $(g,g)$. It adds exactly
one further condition, namely that $\tau$ is an orthomorphism.

For $m=3$ there are three axial families and three diagonal ones. The diagonal families
are $(g,g,e)$, $(e,g,g)$ and $(g,g,g)$. Each of them contributes one condition on a
colouring of the form $c(x,y,z)=x^{-1}\tau(y)z$. So the single extra condition of the
case $m=2$ becomes three conditions for $m=3$. These three conditions are the main theme
of this paper.

A strong complete mapping $\sigma$ is a \emph{colouring bijection}
if in addition
\[
\Dc(x)=\sigma(x)^{-1}x\,\sigma(x)
\]
is a bijection of $G$. A group admitting one is called \emph{colourable}.  

A colouring bijection $\sigma$ yields a proper colouring of $\mathscr{G}_3(G)$ with $|G|$
colours, namely $c(x,y,z)=x^{-1}\sigma^{-1}(y)z$. Hence
$\chi(\mathscr{G}_3(G))=|G|$ (Proposition~\ref{chromatic}). The name refers to this
colouring. The correspondence is exact. In the proof of Proposition~\ref{chromatic} the
neighbours of type $(gx,gy,z)$ force $\Dm$ to be injective. Those of type $(x,gy,gz)$
force $\Dp$, and those of type $(gx,gy,gz)$ force $\Dc$. The three defining conditions
therefore match the three diagonal families $(g,g,e)$, $(e,g,g)$ and $(g,g,g)$ one by
one. Running the same computations backwards shows that none of them can be dropped.
Finally, $\chi(\mathscr{G}_3(G))\ge|G|$ for every $G$. A colouring bijection is thus a
bijection that colours $\mathscr{G}_3(G)$ with the least possible number of colours.

Perhaps the most compelling motivation for the definition comes from the theory of orthogonal
Latin squares based on groups \cite{E3}. For a map $\theta\colon G\to G$ let
$L_\theta$ be the translation square $L_\theta(r,c)=c\,\theta(r)$.  A colouring bijection
$\sigma$ is exactly a triple of mutually orthogonal such squares of order $|G|$: the
Cayley table $L_{\mathrm{id}}$, the square $L_\sigma$, and the square $L_{\Dp}$ built
from $\Dp(x)=x\sigma(x)$ (Proposition~\ref{prop:mols}).  Two mutually orthogonal translation
squares correspond to a complete mapping, and so to the Hall--Paige theorem. Three of
them correspond to a colouring bijection, and the third square is not free but forced,
being the pointwise product of the other two.  Combined with the main theorem, this
yields three mutually orthogonal Latin squares of order $|G|$ based on $G$ for every
noncyclic $3$-group $G\not\cong M_{3^{r}}$.  Section~\ref{sec:mols} develops this in the
language of the orthomorphism graph of $G$.

The third condition is not, then, an ad hoc strengthening, and it has a second reading as
a statement about transversals.  The maps $\Dp$, $\Dm$, $\Dc$ are the diagonals along
$\sigma$ of the three binary operations $x\cdot y$, $x^{-1}y$ and
$x\triangleright y=y^{-1}xy$.  The first two are the Cayley table and its division
parastrophe; the third is the operation table of the \emph{conjugation quandle} of $G$.
Thus $\sigma$ is a colouring bijection if and only if $\{(x,\sigma(x)):x\in G\}$ is a
common transversal of these three arrays.  The last of them is not a Latin square and is
not a parastrophe of the Cayley table, which is why colouring bijections lie outside the
framework in which complete mappings live.  Section~\ref{sec:quandle} develops this.

\medskip

\noindent\textbf{Known results and the main theorem.}
Evans \cite{E2} determined which abelian groups admit strong complete mappings (further
constructions are in \cite{BGZ}).  For nonabelian $3$-groups the existence problem was
settled by Akhtar and Gagola \cite{AG}, with one family of exceptions: the modular
$3$-groups
\[
M_{3^{r}}=\langle a,b\mid a^{3^{r-1}}=b^3=e,\ bab^{-1}=a^{1+3^{r-2}}\rangle,
\qquad r\ge 4 .
\]
Here $M_{3^{r}}$ denotes, as usual, the modular group of order $3^{r}$, that is, the
unique nonabelian $3$-group of order $3^{r}$ with a cyclic maximal subgroup.  Note that $M_{3^{r}}$ has nilpotency
class $2$ for every $r\ge3$, since $[a,b]=a^{3^{r-2}}$ is central.
They proved that every noncyclic $3$-group not isomorphic to $M_{3^{r}}$ $(r\ge4)$ admits a
strong complete mapping.  Our main result is the exact analogue for colouring bijections.

The natural framework for this existence problem is Evans' theory of orthomorphism graphs
\cite{E3,E4}. Section~\ref{sec:mols} recalls it in the form used here. The vertices of
the translation graph $\Perm(G)$ are the Latin squares based on $G$, and adjacency is
orthogonality. The orthomorphism graph $\Orth(G)$ is the neighbourhood of the Cayley
table. The inversion map $\iota(x)=x^{-1}$ is a vertex of $\Perm(G)$. An orthomorphism of
$G$ is a strong complete mapping precisely when it is orthogonal to $\iota$
\cite[\S2]{E4}. Thus $G$ admits a strong complete mapping if and only if $\mathrm{id}$
and $\iota$ have a common neighbour in $\Perm(G)$. For $|G|$ odd these two vertices are
themselves adjacent. The condition then reads that the edge $\{\mathrm{id},\iota\}$ lies
in a triangle. For $|G|$ even they are never adjacent.

For colouring bijections the third vertex of the triangle is not free. By
Proposition~\ref{prop:mols}, a bijection $\sigma$ is a colouring bijection precisely when
the triangle on $\mathrm{id}$ and $\sigma$ closes on the vertex $\Dp_\sigma$ determined
by $\sigma$ itself. In this language the theorem of Akhtar and Gagola produces the first
triangle for every noncyclic $3$-group not isomorphic to $M_{3^{r}}$ $(r\ge4)$. The Main
Theorem below produces the second one in the same range. Both statements leave open the
groups $M_{3^{r}}$, $r\ge4$ (see the concluding remarks and the survey \cite{E5}).

\begin{quote}
\textbf{Main Theorem.}
\emph{Every noncyclic $3$-group not isomorphic to $M_{3^{r}}$ $(r\ge4)$ admits a colouring
bijection.  Consequently $\chi(\mathscr{G}_3(G))=|G|$ for every such group.}
\end{quote}

The base cases are the two nonabelian groups of order $27$, for which colouring
bijections are produced by computer search.  The inductive step lifts a colouring
bijection of $G/H$ to one of $G$ along a normal subgroup $H$ isomorphic to
$C_3\times C_3$ or to $C_9\times C_3$. The two cases are joined by a structural lemma:
every noncyclic $3$-group of order greater than $27$ whose maximal subgroups are all
noncyclic contains a normal subgroup isomorphic to $C_3\times C_3$ with noncyclic
quotient.

\medskip

\noindent\textbf{Organisation.}
Sections~\ref{Section1}--\ref{sec:hol} introduce the colouring bijection and justify its
study. Section~\ref{Section1} fixes the notation, proves the graph-colouring
interpretation (Proposition~\ref{chromatic}) and records the elementary existence
results. Section~\ref{sec:mols} identifies colouring bijections with distinguished
triples of mutually orthogonal Latin squares based on $G$ (Proposition~\ref{prop:mols}).
Section~\ref{sec:quandle} identifies them with common transversals of three arrays: the
multiplication table, its division parastrophe, and the operation table of the
conjugation quandle. Section~\ref{sec:hol} describes the action of the holomorph
$\Hol(G)=G\rtimes\Aut(G)$, which enlarges each colouring bijection to a whole orbit.

The remaining sections address the existence problem in the class of finite $3$-groups.
Section~\ref{sec:order27} settles the groups of order at most $27$.
Section~\ref{sec:lift} develops the lifting machinery and proves the two lifting theorems.
Section~\ref{sec:Main} proves the main theorem. Section~\ref{sec:omega} draws a
consequence for the clique number of the orthomorphism graph. Every group of exponent $3$
and order greater than $3$ is colourable. It therefore carries five mutually orthogonal
Latin squares and satisfies $\omega(\Orth(G))\ge4$. For every order greater than $27$
this improves to $\omega(\Orth(G))\ge7$. For the extraspecial groups $3^{1+2m}$ of either
exponent $3$ or exponent $9$ there is the bound $\omega(\Orth(G))\ge3^{m}-2$, which is
informative for $m\ge2$. The extraspecial group $H_3$ of order
$27$ is the only case in which the first bound is the best one available. The appendix
lists the auxiliary maps of $C_9\times C_3$ used in the proof of the second lifting
result, Theorem~\ref{thm:lift-C9xC3}. They were found by computer search.

\section{Colouring bijections}\label{Section1}

All groups are finite and written multiplicatively; $e$ denotes the
identity, $Z(G)$ the centre of $G$, and
\[
\rho_x\colon G\to G,\qquad \rho_x(g)=xgx^{-1},
\]
the inner automorphism determined by $x\in G$.  If $H\trianglelefteq G$,
then $\rho_x$ restricts to an automorphism of $H$, still denoted by
$\rho_x$, and $\pi\colon G\to G/H$ is the natural projection.

\begin{defn}\label{def:scm_bc}
A bijection $\sigma\colon G\to G$ is a \emph{strong complete mapping}
(SCM) if the two maps
\[
  \Dp(x)=x\cdot\sigma(x)
  \qquad\text{and}\qquad
  \Dm(x)=x^{-1}\cdot \sigma(x)
\]
are both bijections.  An SCM $\sigma$ is called a \emph{colouring
bijection} (CB) if additionally the map
\[
  \Dc(x)=\sigma(x)^{-1}\cdot x\cdot \sigma(x)
\]
is a bijection.  We write $\SCM(G)$ and $\CB(G)$ for the respective sets
of all strong complete mappings and all colouring bijections of $G$.
\end{defn}

A group $G$ is \emph{strongly admissible} if $\SCM(G)\neq\varnothing$, and
\emph{colourable} if $\CB(G)\neq\varnothing$.  By
Definition~\ref{def:scm_bc},
\begin{equation}\label{eq:CB-in-SCM}
\CB(G)\subseteq\SCM(G),
\end{equation}
so every colourable group is strongly admissible.  When $\sigma$ has to
be displayed we write $\Dp_\sigma,\Dm_\sigma,\Dc_\sigma$.

\begin{rem}\label{rem:abelian}
If $G$ is abelian, then $\Dc_\sigma(x)=\sigma(x)^{-1}x\,\sigma(x)=x$ for
every $x$, so $\Dc_\sigma=\mathrm{id}_G$ is a bijection for every
$\sigma$.  Hence
\[
\CB(G)=\SCM(G)\qquad\text{for every abelian group }G .
\]
The condition imposed on $\Dc$ is therefore a genuinely nonabelian
requirement, and by \eqref{eq:CB-in-SCM} it is a restriction of the
notion of a strong complete mapping.
\end{rem}

\medskip

The following proposition is the combinatorial motivation for
Definition~\ref{def:scm_bc}.  We keep the notation of the Introduction for
the graph $\mathscr{G}_3(G)=\Cay(G^{3},\mathbf S_3)$ of \cite{BG} and use
that it contains cliques of size $|G|$, so that
$\chi(\mathscr{G}_3(G))\ge |G|$.

\begin{prop}\label{chromatic}
Let $G$ be a finite group and let $\sigma\in\CB(G)$.  Then
\[
c\colon G^{3}\longrightarrow G,
\qquad
c(x,y,z)=x^{-1}\,\sigma^{-1}(y)\,z,
\]
is a proper colouring of $\mathscr{G}_3(G)$ with $|G|$ colours.
Consequently, every colourable group $G$ satisfies
\[
\chi(\mathscr{G}_3(G))=|G| .
\]
\end{prop}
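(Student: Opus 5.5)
The plan is to verify directly that adjacent vertices of $\mathscr{G}_3(G)$ get distinct colours, one family of $\mathbf S_3$ at a time. I take the Cayley graph with left multiplication, so the neighbours of $(x,y,z)$ are $s\cdot(x,y,z)$ with $s\in\mathbf S_3$; this matches the neighbour types $(gx,gy,z)$ named in the Introduction. Since $\mathbf S_3$ is closed under inverses, the graph is undirected. It therefore suffices to show that $c(s\cdot v)\neq c(v)$ for every vertex $v=(x,y,z)$ and every $s=v_A(g)$ with $A$ an interval and $g\neq e$. Clearly $c$ takes at most $|G|$ values. Together with the clique bound $\chi(\mathscr{G}_3(G))\ge|G|$ recalled before the statement, this gives the equality $\chi(\mathscr{G}_3(G))=|G|$.

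Throughout, set $u=\sigma^{-1}(y)$. For the families that move $y$, also set $v=\sigma^{-1}(gy)$, so that $g=\sigma(v)\sigma(u)^{-1}$; note that $g\neq e$ if and only if $v\neq u$.

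The three axial families are immediate.
\begin{itemize}
\item For $(gx,y,z)$, the colours $x^{-1}g^{-1}uz$ and $x^{-1}uz$ coincide only if $g=e$.
\item For $(x,y,gz)$, the colours coincide only if $gz=z$, i.e. $g=e$.
\item For $(x,gy,z)$, the colours coincide only if $v=u$, which is excluded because $\sigma$ is a bijection.
\end{itemize}

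The diagonal families are the real content, and each should reduce to injectivity of one of $\Dm,\Dp,\Dc$.

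\begin{itemize}
\item For $(gx,gy,z)$, equality of colours reads $g^{-1}v=u$. Substituting $g^{-1}=\sigma(u)\sigma(v)^{-1}$ gives $\sigma(v)^{-1}v=\sigma(u)^{-1}u$. The map $w\mapsto\sigma(w)^{-1}w$ is the pointwise inverse of $\Dm$, so it is injective, and hence $u=v$, a contradiction.
\item For $(x,gy,gz)$, equality reads $vg=u$. Substituting gives $v\sigma(v)=u\sigma(u)$, i.e. $\Dp(v)=\Dp(u)$, so again $u=v$.
\item For $(gx,gy,gz)$, equality reads $g^{-1}vg=u$. Substituting and conjugating by $\sigma(u)$ gives $\sigma(v)^{-1}v\,\sigma(v)=\sigma(u)^{-1}u\,\sigma(u)$, i.e. $\Dc(v)=\Dc(u)$, so again $u=v$.
\end{itemize}

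The only step needing care is this bookkeeping in the three diagonal cases: each must be rewritten in exactly the shape of the corresponding diagonal map. This is where the choice of $\sigma^{-1}$ in the middle coordinate of $c$ pays off. Apart from that the argument is routine, and it confirms the one-to-one matching of the diagonal families $(g,g,e)$, $(e,g,g)$, $(g,g,g)$ with $\Dm$, $\Dp$, $\Dc$ announced in the Introduction.
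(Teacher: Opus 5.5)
Your proposal is correct and follows the paper's proof essentially verbatim. It uses the same substitution $u=\sigma^{-1}(y)$, $v=\sigma^{-1}(gy)$, $g=\sigma(v)\sigma(u)^{-1}$, disposes of the three axial families in the same way, and reduces the diagonal families $(gx,gy,z)$, $(x,gy,gz)$, $(gx,gy,gz)$ to injectivity of $\Dm$, $\Dp$, $\Dc$ respectively. The only cosmetic difference is that the paper also checks surjectivity via $c(e,\sigma(w),e)=w$, so that exactly $|G|$ colours are used; your ``at most $|G|$'' together with the clique bound already suffices.
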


\begin{proof}
Every neighbour of a vertex $(x,y,z)$ of $\mathscr{G}_3(G)$ has one of the
six forms
\[
(gx,y,z),\quad (x,gy,z),\quad (x,y,gz),\quad
(gx,gy,z),\quad (x,gy,gz),\quad (gx,gy,gz),
\]
where $g\in G\setminus\{e\}$.  Fix such a $g$ and put
\[
u=\sigma^{-1}(y),
\qquad
v=\sigma^{-1}(gy),
\]
so that
\begin{equation}\label{eq:uv}
\sigma(u)=y,
\qquad
\sigma(v)=g\,\sigma(u),
\qquad
g=\sigma(v)\,\sigma(u)^{-1}.
\end{equation}
Since $g\neq e$ and $\sigma$ is injective, we have $v\neq u$.  In this
notation
\[
c(x,y,z)=x^{-1}u\,z,
\qquad
c(x,gy,z)=x^{-1}v\,z .
\]
We show that in each of the six cases the assumption that the neighbour
receives the colour $c(x,y,z)$ leads to a contradiction.

For a neighbour of the form $(gx,y,z)$ the colour equals $x^{-1}g^{-1}u\,z$.
If this equals $x^{-1}u\,z$, then $g^{-1}u=u$, whence $g=e$, a
contradiction.  Likewise, the colour of $(x,y,gz)$ equals $x^{-1}u\,gz$.
If this equals $x^{-1}u\,z$, then $gz=z$ and again $g=e$.  Finally, the
colour of $(x,gy,z)$ equals $x^{-1}v\,z$, and if this equals $x^{-1}u\,z$,
then $v=u$, contrary to $v\neq u$.

Consider now a neighbour of the form $(gx,gy,z)$, whose colour is
$x^{-1}g^{-1}v\,z$.  Assume that
\[
x^{-1}g^{-1}v\,z=x^{-1}u\,z .
\]
Cancelling $x^{-1}$ on the left and $z$ on the right gives $g^{-1}v=u$.
Substituting $g^{-1}=\sigma(u)\,\sigma(v)^{-1}$ from \eqref{eq:uv} we obtain
$\sigma(u)\,\sigma(v)^{-1}v=u$, that is,
\[
\sigma(v)^{-1}v=\sigma(u)^{-1}u .
\]
Inverting both sides yields $v^{-1}\sigma(v)=u^{-1}\sigma(u)$, i.e.\ 
$\Dm_\sigma(v)=\Dm_\sigma(u)$.  As $\sigma\in\CB(G)$, the map $\Dm_\sigma$
is injective, so $v=u$, a contradiction.

Next let the neighbour be $(x,gy,gz)$, with colour $x^{-1}v\,gz$, and
assume that
\[
x^{-1}v\,gz=x^{-1}u\,z .
\]
Cancelling as before gives $v\,g=u$, and substituting
$g=\sigma(v)\,\sigma(u)^{-1}$ we get $v\,\sigma(v)\,\sigma(u)^{-1}=u$, that
is,
\[
v\,\sigma(v)=u\,\sigma(u),
\]
i.e.\ $\Dp_\sigma(v)=\Dp_\sigma(u)$.  Since $\Dp_\sigma$ is injective, again
$v=u$, a contradiction.

Finally, let the neighbour be $(gx,gy,gz)$, with colour
$x^{-1}g^{-1}v\,gz$, and assume that
\[
x^{-1}g^{-1}v\,gz=x^{-1}u\,z .
\]
Cancelling gives $g^{-1}v\,g=u$, and substituting for $g$ and $g^{-1}$
from \eqref{eq:uv} we obtain
$\sigma(u)\,\sigma(v)^{-1}\,v\,\sigma(v)\,\sigma(u)^{-1}=u$, that is,
\[
\sigma(v)^{-1}v\,\sigma(v)=\sigma(u)^{-1}u\,\sigma(u),
\]
i.e.\ $\Dc_\sigma(v)=\Dc_\sigma(u)$.  The map $\Dc_\sigma$ is injective, so
$v=u$, once more a contradiction.

Hence adjacent vertices always receive distinct colours, and $c$ is a
proper colouring.  It is surjective, since $c(e,\sigma(w),e)=w$ for every
$w\in G$, so it uses exactly $|G|$ colours.  Together with
$\chi(\mathscr{G}_3(G))\ge|G|$ this gives $\chi(\mathscr{G}_3(G))=|G|$.
\end{proof}

Colourability is a restriction only at the primes $2$ and $3$.

\begin{cor}\label{cor:coprime6}
If $\gcd(|G|,6)=1$, then $\sigma(x)=x^{2}$ is a colouring bijection of
$G$.  In particular, every group of order coprime to $6$ is colourable.
\end{cor}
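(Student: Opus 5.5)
We take $\sigma(x)=x^{2}$ and check directly that $\sigma$, $\Dp_\sigma$, $\Dm_\sigma$ and $\Dc_\sigma$ are bijections of $G$. All four maps turn out to be power maps $x\mapsto x^{k}$, because every element commutes with its own powers. The whole argument then rests on one standard fact: if $\gcd(k,|G|)=1$, then $x\mapsto x^{k}$ is a bijection of $G$. To see this, choose $\ell$ with $k\ell\equiv 1\pmod{|G|}$. Since $x^{|G|}=e$, we get $(x^{k})^{\ell}=x=(x^{\ell})^{k}$, so $x\mapsto x^{\ell}$ is a two-sided inverse.

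The computation is as follows:
\[
\sigma(x)=x^{2},\qquad
\Dp_\sigma(x)=x\cdot x^{2}=x^{3},\qquad
\Dm_\sigma(x)=x^{-1}x^{2}=x,\qquad
\Dc_\sigma(x)=x^{-2}\,x\,x^{2}=x .
\]
The hypothesis $\gcd(|G|,6)=1$ means that $|G|$ is prime to both $2$ and $3$. Hence $x\mapsto x^{2}$ and $x\mapsto x^{3}$ are bijections by the fact above. The maps $\Dm_\sigma$ and $\Dc_\sigma$ are the identity, so they are trivially bijections. By Definition~\ref{def:scm_bc}, $\sigma\in\CB(G)$, and so $G$ is colourable. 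Combining this with Proposition~\ref{chromatic} also gives $\chi(\mathscr{G}_3(G))=|G|$ for such groups.

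There is no real obstacle here. The only point to be careful about is that every simplification is a manipulation inside the cyclic subgroup $\langle x\rangle$, so no commutativity of $G$ is used. The computation also shows why the primes $2$ and $3$ are exactly the ones excluded: they are the exponents appearing in $\sigma$ and $\Dp_\sigma$.
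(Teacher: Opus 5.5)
Your proof is correct and matches the paper's argument: the same computation $\Dp(x)=x^{3}$, $\Dm(x)=x$, $\Dc(x)=x$, with $x\mapsto x^{2}$ and $x\mapsto x^{3}$ bijective because $2\nmid|G|$ and $3\nmid|G|$. The only difference is that you also prove the standard fact that a power map with exponent coprime to $|G|$ is a bijection, which the paper takes for granted.
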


\begin{proof}
As $2\nmid|G|$, the map $x\mapsto x^{2}$ is a bijection of $G$.  Moreover
\[
\Dp(x)=x\cdot x^{2}=x^{3},
\qquad
\Dm(x)=x^{-1}\cdot x^{2}=x,
\qquad
\Dc(x)=x^{-2}\cdot x\cdot x^{2}=x,
\]
and $x\mapsto x^{3}$ is a bijection because $3\nmid|G|$.
\end{proof}

We shall use the following theorem of Evans.

\begin{thm}[{\cite{E2}}]\label{3-group-abelian}
Every noncyclic abelian $3$-group is strongly admissible, and hence, by
Remark~\ref{rem:abelian}, colourable.
\end{thm}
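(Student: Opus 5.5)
The statement is attributed to Evans \cite{E2}, and its second half is a formal consequence of results already in the paper. So the plan is to take the strong admissibility of noncyclic abelian $3$-groups from \cite{E2} and then deduce colourability from Remark~\ref{rem:abelian}. I would nevertheless sketch why Evans' result holds, so that the citation is transparent.

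\emph{Step 1 (reduction to colourability).} Let $G$ be a noncyclic abelian $3$-group and let $\sigma\in\SCM(G)$. Since $G$ is abelian, $\Dc_\sigma=\mathrm{id}_G$, which is a bijection. Hence $\sigma\in\CB(G)$, and so $\CB(G)=\SCM(G)\neq\varnothing$. This step is immediate.

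\emph{Step 2 (existence of an SCM).} Write $G\cong C_{3^{a_1}}\times\cdots\times C_{3^{a_k}}$ with $k\ge2$. The key observation is that a direct product of SCMs is an SCM: if $\sigma_i\in\SCM(G_i)$, then $\sigma_1\times\sigma_2$ has $\Dp=\Dp_{\sigma_1}\times\Dp_{\sigma_2}$ and $\Dm=\Dm_{\sigma_1}\times\Dm_{\sigma_2}$, both of which are bijections. It therefore suffices to treat the groups $C_{3^a}\times C_{3^b}$ and $C_{3^a}\times C_{3^b}\times C_{3^c}$. Indeed, any $k\ge2$ splits into blocks of sizes $2$ and $3$.

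For these rank-$2$ and rank-$3$ groups, I would first try linear maps. On an elementary abelian group $\F_3^n$ with $n\ge2$, choose $\sigma=A\in\GL_n(\F_3)$ such that $A+I$ and $A-I$ are both invertible. Equivalently, $A$ has no eigenvalue $\pm1$. For example, take the companion matrix of $x^2+1$ when $n=2$, and of an irreducible cubic when $n=3$. This construction fails for $n=1$, which is why cyclic groups are excluded.

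For unequal exponents, a homomorphism is no longer enough in general. I would then use an Evans-style lifting: choose a subgroup $H$ with $H\cong C_3\times C_3$ such that $G/H$ is again noncyclic, or is small enough to handle directly. Then build $\sigma$ coset by coset as $\sigma(x)=s(\bar\sigma(\bar x))\,A(x)$, combining an SCM $\bar\sigma$ of the quotient with a suitable linear SCM on $H$, and check that $\Dp$ and $\Dm$ are bijective by the usual fibre-counting argument.

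\emph{Main obstacle.} The hard part is Step 2 for groups such as $C_{3^a}\times C_3$ with $a\ge2$. There a quotient by a rank-$2$ subgroup may be cyclic, so the induction needs a separate base construction, for instance an explicit SCM of $C_9\times C_3$ and a direct lifting through cyclic quotients. This is exactly the content of \cite{E2}. In the paper it suffices to cite Evans for it and then apply Step 1.
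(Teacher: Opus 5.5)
Your argument is the paper's own: strong admissibility is quoted from Evans \cite{E2}, and colourability then follows at once from Remark~\ref{rem:abelian}, since $\Dc_\sigma=\mathrm{id}_G$ for abelian $G$. The Step~2 sketch is not needed, and its final suggestion would not work as stated: every noncyclic subgroup of $C_{3^a}\times C_3$ contains $\Omega_1(C_{3^a}\times C_3)\cong C_3\times C_3$, so the quotient by it is cyclic, and no lifting from a strongly admissible subgroup and a strongly admissible quotient can reach this family; it needs a direct construction for every $a$ (or a lifting device of a different kind), and that is exactly what is imported from \cite{E2}.
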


\begin{rem}\label{rem:cyclic-excluded}
Noncyclicity cannot be dropped: a nontrivial cyclic $3$-group admits no
strong complete mapping \cite{E2,E3}, hence by \eqref{eq:CB-in-SCM} it is
not colourable.  This explains the exclusion of cyclic groups in all
statements below.
\end{rem}

\section{Colouring bijections and mutually orthogonal Latin squares}\label{sec:mols}

The three conditions of Definition~\ref{def:scm_bc} have a classical reading. It comes
from the theory of orthogonal Latin squares based on a group. The standard reference is
Evans \cite{E3}. We first recall that framework in the form we need: translation squares,
orthogonality, and the graph $\Perm(G)$ whose vertices are \emph{all} of them, of which
the orthomorphism graph $\Orth(G)$ is the neighbourhood of the Cayley table. We then show
that colouring bijections are exactly the triangles of $\Perm(G)$ through the Cayley
table (Proposition~\ref{prop:mols}), that each of them carries a second triangle disjoint
from the first (Proposition~\ref{prop:bowtie}), and that for $|G|$ odd the two triangles
close up into a wheel on five vertices (Proposition~\ref{prop:odd-triangles}). 

\subsection{The translation graph and the orthomorphism graph}\label{subsec:orth-graph}

Throughout, $\theta,\varphi$ denote maps $G\to G$. Recall the notion of orthogonality.
Let $A,B$ be two Latin squares of order $n$ on the same symbol set. They are
\emph{orthogonal}, written $A\perp B$, if the map
$(r,c)\mapsto\bigl(A(r,c),B(r,c)\bigr)$ is a bijection. Its target is the set of ordered
pairs of symbols. A family of squares is \emph{mutually orthogonal} (MOLS) if its members
are pairwise orthogonal.

The squares attached to $G$ are the following. Let $\theta\colon G\to G$ be a map. Let
$L_\theta$ be the array on $G\times G$ given by
\[
L_\theta(r,c)=c\cdot\theta(r).
\]
We call it the \emph{translation square} of $\theta$ \emph{based on} $G$. Take
$\theta=\mathrm{id}$. This gives $L_{\mathrm{id}}(r,c)=cr$. That is the Cayley table of
$G$, with the roles of the two arguments interchanged. Two computations govern everything
that follows.

\smallskip
\noindent\textbf{(T1)}\quad\emph{$L_\theta$ is a Latin square if and only if $\theta$ is
a bijection of $G$.}

\noindent Indeed, for fixed $r$ the row $c\mapsto c\,\theta(r)$ is a permutation of $G$
whatever $\theta$ is, whereas for fixed $c$ the column $r\mapsto c\,\theta(r)$ is a
permutation of $G$ exactly when $\theta$ is one.  In particular $L_{\mathrm{id}}$ is
always a Latin square.

\smallskip
\noindent\textbf{(T2)}\quad\emph{For bijections $\theta,\varphi$ of $G$,}
\[
L_\theta\perp L_\varphi
\qquad\Longleftrightarrow\qquad
r\mapsto \theta(r)^{-1}\varphi(r)\ \text{ is a bijection of } G .
\]

\noindent Indeed, fix $(u,v)\in G\times G$ and solve $L_\theta(r,c)=u$,
$L_\varphi(r,c)=v$.  The first equation gives $c=u\,\theta(r)^{-1}$, and substituting it
into the second one leaves
\[
\theta(r)^{-1}\varphi(r)=u^{-1}v .
\]
As $(u,v)$ runs over $G\times G$ the element $u^{-1}v$ runs over $G$, so the pair map is
a bijection precisely when $r\mapsto\theta(r)^{-1}\varphi(r)$ attains every value exactly
once.

\smallskip
By (T1) the translation squares based on $G$ that are Latin are indexed by the bijections
of $G$, and by (T2) their orthogonality is a relation between such bijections.  This makes
the following graph, rather than any subgraph of it, the natural object.

Two products occur side by side in what follows, and we keep them apart notationally:
juxtaposition or $\circ$ always denotes composition of maps,
$(\varphi\circ\theta)(r)=\varphi(\theta(r))$, whereas a product taken in $G$ is always
written with its argument displayed, as in $r\mapsto\theta(r)^{-1}\varphi(r)$.  Thus
$\Sym(G)$ is a group under composition, while an expression such as
$\theta(r)^{-1}\varphi(r)$ is an element of $G$.

\begin{defn}\label{def:perm-graph}
For bijections $\theta,\varphi$ of $G$ let
\[
w(\theta,\varphi)\colon G\to G,
\qquad
w(\theta,\varphi)(r)=\theta(r)^{-1}\varphi(r),
\]
be the \emph{witness} of the pair.  The \emph{translation graph} $\Perm(G)$ has as
vertices all bijections $\theta\colon G\to G$, with an edge $\theta\sim\varphi$ whenever
$w(\theta,\varphi)$ is a bijection.
\end{defn}

The witness of an edge is itself a vertex of $\Perm(G)$, and $w(\varphi,\theta)$ is the
pointwise inverse of $w(\theta,\varphi)$, so the relation is symmetric.  Witnesses will
label every edge we draw, and the three maps of Definition~\ref{def:scm_bc} will turn out
to be witnesses of three particular edges.

By (T1) and (T2) the vertices of $\Perm(G)$ are exactly the Latin translation squares
based on $G$, its edges are exactly the orthogonal pairs among them,
\[
\theta\ \text{a vertex}\iff L_\theta\ \text{is Latin},
\qquad
\theta\sim\varphi\iff L_\theta\perp L_\varphi ,
\]
and the Cayley table is the vertex $\mathrm{id}$.  Recall that a bijection $\theta$ is an
\emph{orthomorphism} of $G$ if $r\mapsto r^{-1}\theta(r)=w(\mathrm{id},\theta)(r)$ is
again a bijection. By (T2) with $\theta=\mathrm{id}$ this says precisely that
$L_\theta$ is orthogonal to the Cayley table, that is, $\theta\sim\mathrm{id}$.  The classical object of \cite{E3} is therefore a link in $\Perm(G)$.

\begin{defn}\label{def:orth-graph}
The \emph{orthomorphism graph} $\Orth(G)$ is the subgraph of $\Perm(G)$ induced on the
neighbourhood of the vertex $\mathrm{id}$. Its vertices are the orthomorphisms of $G$ and
$\theta\sim\varphi$ has the same meaning as in Definition~\ref{def:perm-graph}.
\end{defn}

We allow ourselves to write $\theta\in\Orth(G)$ and $|\Orth(G)|$ when we mean the vertex
set of $\Orth(G)$, that is, the set of orthomorphisms of $G$.  Note that for $|G|>1$ the
map $\mathrm{id}$ is not an orthomorphism, since $r\mapsto r^{-1}r$ is constant. The
Cayley table is the \emph{anchor} of $\Orth(G)$ and not one of its vertices.  (For the
trivial group, $\mathrm{id}$ is an orthomorphism and $\Perm(G)$ is a single vertex with a
loop; we exclude this case throughout.)  The passage between the two graphs is governed by the following lemma, which
also shows that no information is lost: all links of $\Perm(G)$ are copies of $\Orth(G)$.

\begin{lem}\label{lem:perm-cayley}
Let $G$ be a finite group with $|G|>1$.
\begin{enumerate}
\item For bijections $\theta,\varphi$ of $G$ one has
$\theta\sim\varphi$ if and only if $\varphi\circ\theta^{-1}\in\Orth(G)$.
\item The set $\Orth(G)$ is closed under inversion in $\Sym(G)$ and does not contain
$\mathrm{id}$, so it is a legitimate connection set, and by (1)
\[
\Perm(G)=\Cay\bigl(\Sym(G),\Orth(G)\bigr).
\]
In particular $\Perm(G)$ is vertex-transitive, has $|G|!$ vertices and is regular of
degree $|\Orth(G)|$.
\item For every $\psi\in\Sym(G)$ the map $\theta\mapsto\theta\circ\psi$ is an
automorphism of $\Perm(G)$. Consequently the neighbourhood of any vertex $\theta$ induces
a graph isomorphic to $\Orth(G)$.
\end{enumerate}
\end{lem}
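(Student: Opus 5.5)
The plan is to reduce everything to a single change of variables in the witness map. For part (1), fix bijections $\theta,\varphi$ and put $\psi=\varphi\circ\theta^{-1}$. I will compute $w(\theta,\varphi)$ after the substitution $s=\theta(r)$, which is legitimate because $\theta$ is a bijection:
\[
w(\theta,\varphi)(r)=\theta(r)^{-1}\varphi(r)=s^{-1}\psi(s)=w(\mathrm{id},\psi)(s),
\qquad s=\theta(r).
\]
This gives the identity $w(\theta,\varphi)=w(\mathrm{id},\psi)\circ\theta$. Hence $w(\theta,\varphi)$ is a bijection if and only if $w(\mathrm{id},\psi)$ is one. The latter is exactly the statement that $\psi\sim\mathrm{id}$, i.e.\ $\psi\in\Orth(G)$, using Definition~\ref{def:orth-graph} and the remark that orthomorphisms are the neighbours of $\mathrm{id}$.

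For part (2), closure under inversion comes from applying (1) twice together with the symmetry of $\sim$ noted after Definition~\ref{def:perm-graph}. Take $\psi\in\Orth(G)$. Then $\mathrm{id}\sim\psi$ by definition. By symmetry $\psi\sim\mathrm{id}$, and (1) applied with $\theta=\psi$, $\varphi=\mathrm{id}$ gives $\mathrm{id}\circ\psi^{-1}=\psi^{-1}\in\Orth(G)$. The fact $\mathrm{id}\notin\Orth(G)$ for $|G|>1$ was observed before the lemma: $r\mapsto r^{-1}r=e$ is constant, hence not a bijection. Part (1) then says literally that $\theta\sim\varphi$ iff $\varphi\theta^{-1}\in\Orth(G)$, which is the adjacency rule of the Cayley graph $\Cay(\Sym(G),\Orth(G))$ in its right-multiplication convention. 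We need to check which convention is used: here $\varphi=s\circ\theta$ with $s\in\Orth(G)$, which is the left-translate form. That is fine as long as it is stated consistently. Vertex-transitivity, the vertex count $|G|!$, and regularity of degree $|\Orth(G)|$ are standard for Cayley graphs.

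For part (3), for $\psi\in\Sym(G)$ note that $(\varphi\psi)(\theta\psi)^{-1}=\varphi\theta^{-1}$. So by (1), right composition with $\psi$ preserves adjacency and non-adjacency, and it is a bijection of $\Sym(G)$ with inverse given by composition with $\psi^{-1}$; hence it is an automorphism. Taking $\psi=\theta$ sends $\mathrm{id}$ to $\theta$. It therefore maps the neighbourhood of $\mathrm{id}$, which induces $\Orth(G)$, isomorphically onto the neighbourhood of $\theta$.

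There is no real obstacle here. The only points needing care are keeping composition and the pointwise group product separate in the key identity $w(\theta,\varphi)=w(\mathrm{id},\varphi\theta^{-1})\circ\theta$, and fixing the side convention for the Cayley graph.
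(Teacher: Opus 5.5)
Your proposal is correct and follows the paper's proof: part (1) uses the same identity $w(\theta,\varphi)=w(\mathrm{id},\varphi\circ\theta^{-1})\circ\theta$, and parts (2) and (3) rest on the same observations. The only difference is minor: you obtain closure of $\Orth(G)$ under inversion and invariance under right composition from (1) together with the symmetry of $\sim$, whereas the paper checks both directly on the witness maps, via $s^{-1}\chi^{-1}(s)=(r^{-1}\chi(r))^{-1}$ and $D\circ\psi$ respectively.
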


\begin{proof}
(1) Substituting $s=\theta(r)$ gives the identity
\[
w(\theta,\varphi)=w\bigl(\mathrm{id},\varphi\circ\theta^{-1}\bigr)\circ\theta ,
\]
so the two witnesses differ by composition with the bijection $\theta$.  One is therefore
a bijection precisely when the other is, and the latter condition says exactly that
$\varphi\circ\theta^{-1}$ is an orthomorphism.

(2) Let $\chi\in\Orth(G)$ and put $s=\chi(r)$.  Then
$s^{-1}\chi^{-1}(s)=\chi(r)^{-1}r=\bigl(r^{-1}\chi(r)\bigr)^{-1}$, and as $r$ runs over
$G$ so does $s$. Being the composite of the bijection $r\mapsto r^{-1}\chi(r)$ with
inversion and with $\chi^{-1}$, the map $s\mapsto s^{-1}\chi^{-1}(s)$ is a bijection, so
$\chi^{-1}\in\Orth(G)$.  Moreover $\mathrm{id}\notin\Orth(G)$ because $|G|>1$.  Thus the
right-hand side is a simple undirected graph, and by (1) its adjacency is that of
$\Perm(G)$.

(3) Right composition $\theta\mapsto\theta\circ\psi$ preserves adjacency for every
$\psi\in\Sym(G)$: writing $D(r)=\theta(r)^{-1}\varphi(r)$ for the map governing
$\theta\sim\varphi$, the map governing the pair $\theta\circ\psi$, $\varphi\circ\psi$ is
\[
r\longmapsto\theta\bigl(\psi(r)\bigr)^{-1}\varphi\bigl(\psi(r)\bigr)=(D\circ\psi)(r),
\]
which is a bijection precisely when $D$ is one.  Taking $\psi=\theta^{-1}$ carries
$\theta$ to $\mathrm{id}$, hence the neighbourhood of $\theta$ onto that of $\mathrm{id}$.
\end{proof}

\begin{lem}\label{lem:cliques-mols}
Let $G$ be a finite group and $k\ge1$.
\begin{enumerate}
\item The $k$-cliques of $\Perm(G)$ are exactly the sets of $k$ mutually orthogonal
Latin squares based on $G$.
\item Every $k$-clique of $\Perm(G)$ can be carried by an automorphism of $\Perm(G)$ to a
$k$-clique containing $\mathrm{id}$, whose remaining $k-1$ vertices form a $(k-1)$-clique
of $\Orth(G)$.  In particular
\[
\omega\bigl(\Perm(G)\bigr)=\omega\bigl(\Orth(G)\bigr)+1 .
\]
\end{enumerate}
\end{lem}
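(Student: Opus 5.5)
Part (1) is a direct translation, and part (2) uses the vertex-transitivity established in Lemma~\ref{lem:perm-cayley}.

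\textbf{Part (1).} By (T1), the vertices of $\Perm(G)$ are exactly the bijections $\theta$, and these correspond to the Latin translation squares $L_\theta$. By (T2), $\theta\sim\varphi$ holds precisely when $L_\theta\perp L_\varphi$. A $k$-clique is a set of $k$ pairwise adjacent vertices, so it corresponds to $k$ pairwise orthogonal Latin squares of the form $L_\theta$, that is, $k$ MOLS based on $G$. Conversely, any such family of MOLS gives a $k$-clique.

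One point needs care. Distinct bijections give distinct squares, since $L_\theta(r,e)=\theta(r)$ recovers $\theta$. So the correspondence between vertex sets and families of squares is a bijection. For $k\ge2$ no loops intervene, because $\theta\not\sim\theta$: the witness $w(\theta,\theta)$ is constant, hence not a bijection when $|G|>1$.

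\textbf{Part (2).} Let $\{\theta_1,\dots,\theta_k\}$ be a $k$-clique. Apply the automorphism $\theta\mapsto\theta\circ\theta_1^{-1}$ from Lemma~\ref{lem:perm-cayley}(3). This carries the clique to another $k$-clique $\{\mathrm{id},\theta_2\circ\theta_1^{-1},\dots,\theta_k\circ\theta_1^{-1}\}$.

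The remaining $k-1$ vertices are adjacent to $\mathrm{id}$, so by Lemma~\ref{lem:perm-cayley}(1) they are orthomorphisms. They are pairwise adjacent in $\Perm(G)$. Since $\Orth(G)$ is an induced subgraph, they form a $(k-1)$-clique of $\Orth(G)$.

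Conversely, take any $(k-1)$-clique of $\Orth(G)$. Adjoining $\mathrm{id}$ to it gives a $k$-clique of $\Perm(G)$, since every orthomorphism is adjacent to $\mathrm{id}$.

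These two maps show that $\Perm(G)$ has a $k$-clique if and only if $\Orth(G)$ has a $(k-1)$-clique. Hence $\omega(\Perm(G))=\omega(\Orth(G))+1$.

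\textbf{Degenerate cases.} There is no substantive obstacle; the only care needed is here. The case $k=1$ uses the convention that the empty set is a $0$-clique. If $\Orth(G)$ is empty, then $\omega(\Orth(G))=0$ and $\omega(\Perm(G))=1$, which agrees with the formula. We assume $|G|>1$ throughout, as the paper does.
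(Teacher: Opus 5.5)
Your proposal is correct and follows the paper's own argument: part (1) comes from (T1) and (T2), and part (2) comes from the right-composition automorphism $\theta\mapsto\theta\circ\theta_1^{-1}$ of Lemma~\ref{lem:perm-cayley}(3). You also make explicit several points the paper leaves implicit: that $\theta\mapsto L_\theta$ is injective, that there are no loops for $|G|>1$, and how the degenerate cases $k=1$ and $\Orth(G)=\varnothing$ are handled.
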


\begin{proof}
(1) is (T1) together with (T2), and (2) follows from Lemma~\ref{lem:perm-cayley}(3).
\end{proof}

Part (2) is the usual normalisation: a set of MOLS based on $G$ may always be moved so
that one of its members is the Cayley table, which is why the classical theory works with
$\Orth(G)$ and counts an $r$-clique there as $r+1$ MOLS \cite[Lemma~2]{E4}.  In the same
vein, adjoining the constant row $e$ to the $k$ rows given by a $k$-clique produces a
$(|G|,k+1;1)$-difference matrix over $G$, and every such matrix arises in this way after
multiplying all of its rows pointwise by the inverse of one of them. Normalising a second
row is then exactly the passage to $\Orth(G)$ \cite{Jun}, \cite[\S2]{E4}.  Finally, $\Perm(G)$ has an edge if
and only if $\Orth(G)\neq\varnothing$, that is, if and only if $G$ admits a complete
mapping. By Hall--Paige \cite{HP,W,E1,BCCSZ} this fails exactly when the Sylow
$2$-subgroup of $G$ is nontrivial and cyclic, and then $\Perm(G)$ has no edges at all.

\subsection{Colouring bijections are the canonical triangles}

\begin{prop}\label{prop:mols}
Let $\sigma\colon G\to G$ be a map and put $\Dp(r)=r\,\sigma(r)$.  The following are
equivalent.
\begin{enumerate}
\item $\sigma\in\CB(G)$;
\item $L_{\mathrm{id}}$, $L_{\sigma}$ and $L_{\Dp}$ are three mutually orthogonal Latin
      squares of order $|G|$.
\end{enumerate}
\end{prop}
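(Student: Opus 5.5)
The plan is to translate each of the six conditions in (2) into a statement about witnesses, using (T1) and (T2), and then match them against Definition~\ref{def:scm_bc}. By (T1), $L_{\mathrm{id}}$ is always Latin. $L_\sigma$ is Latin iff $\sigma$ is a bijection, and $L_{\Dp}$ is Latin iff $\Dp$ is a bijection. So the Latin conditions in (2) say exactly that $\sigma$ and $\Dp$ are bijections. These are two of the requirements of (1).

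Next I would compute the three witnesses. We have $w(\mathrm{id},\sigma)(r)=r^{-1}\sigma(r)=\Dm(r)$, so $L_{\mathrm{id}}\perp L_\sigma$ iff $\Dm$ is a bijection. Also $w(\mathrm{id},\Dp)(r)=r^{-1}r\sigma(r)=\sigma(r)$, so $L_{\mathrm{id}}\perp L_{\Dp}$ iff $\sigma$ is a bijection, which is already covered. The key computation is the third one:
\[
w(\sigma,\Dp)(r)=\sigma(r)^{-1}\,r\,\sigma(r)=\Dc(r),
\]
so $L_\sigma\perp L_{\Dp}$ iff $\Dc$ is a bijection.

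Assembling these gives both directions at once. If (1) holds, then $\sigma,\Dp$ are bijections, so all three squares are Latin, and $\Dm,\sigma,\Dc$ are bijections, so the squares are pairwise orthogonal. Conversely, (2) forces $\sigma$ and $\Dp$ to be bijections (Latin property) and $\Dm$ and $\Dc$ to be bijections (orthogonality). That is precisely $\sigma\in\CB(G)$.

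One point needs care. (T2) is stated only for bijections $\theta,\varphi$, and in (2) the hypothesis is merely that $\sigma$ is a map. So I must first extract bijectivity of $\sigma$ and $\Dp$ from the Latin property before applying (T2). Apart from this ordering, the argument is routine. The only conceptual step, and the mild obstacle, is spotting that the witness of the edge $\sigma\sim\Dp$ is exactly the conjugation map $\Dc$.
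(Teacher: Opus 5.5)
Your proposal is correct and follows the paper's proof essentially verbatim. (T1) handles the Latin property, and (T2) reduces the three orthogonalities to bijectivity of $\Dm$, $\sigma$ and $\Dc$, with the middle one redundant; your remark about extracting bijectivity of $\sigma$ and $\Dp$ before invoking (T2) is a sound detail that the paper leaves implicit.
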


\begin{proof}
By (T1) the array $L_{\mathrm{id}}$ is always Latin, while $L_\sigma$ and $L_{\Dp}$ are
Latin if and only if $\sigma$ and $\Dp$ are bijections.  By (T2) the three possible
orthogonalities are governed by the three maps
\[
\begin{aligned}
L_{\mathrm{id}}\perp L_\sigma
   &:\ r\mapsto \mathrm{id}(r)^{-1}\sigma(r)=r^{-1}\sigma(r)=\Dm(r),\\
L_{\mathrm{id}}\perp L_{\Dp}
   &:\ r\mapsto \mathrm{id}(r)^{-1}\Dp(r)=r^{-1}r\,\sigma(r)=\sigma(r),\\
L_\sigma\perp L_{\Dp}
   &:\ r\mapsto \sigma(r)^{-1}\Dp(r)=\sigma(r)^{-1}r\,\sigma(r)=\Dc(r).
\end{aligned}
\]
Hence statement~(2) holds if and only if the four maps $\sigma$, $\Dp$, $\Dm$ and $\Dc$
are bijections of $G$, which by Definition~\ref{def:scm_bc} is statement~(1).  (The
second line imposes no condition beyond the bijectivity of $\sigma$, which is already
required for $L_\sigma$ to be Latin.)
\end{proof}

\medskip

In the language of Subsection~\ref{subsec:orth-graph}, Proposition~\ref{prop:mols} says
that
\[
\sigma\in\CB(G)
\qquad\Longleftrightarrow\qquad
\{\mathrm{id},\sigma,\Dp\}\ \text{is a triangle of}\ \Perm(G),
\]
that is, a triangle through the Cayley table.  The three defining conditions of a
colouring bijection are distributed over its three edges: $\mathrm{id}\sim\sigma$ holds
exactly when $\Dm$ is a bijection, $\mathrm{id}\sim\Dp$ exactly when $\Dp$ is one, and
$\sigma\sim\Dp$ exactly when $\Dc$ is one.  Equivalently, by
Lemma~\ref{lem:cliques-mols}, $\sigma$ and $\Dp$ are adjacent vertices of $\Orth(G)$.

What makes the resulting triangle $L_{\mathrm{id}}$--$L_\sigma$--$L_{\Dp}$ \emph{canonical}
is that its third square is not a free vertex but the forced one
$\Dp(r)=r\,\sigma(r)$, the pointwise product of the other two.  Beyond ``$\sigma$ is an
orthomorphism'' the sole new demand is the edge $\sigma\sim\Dp$, that is, $\Dc$.  For
abelian $G$ that edge comes for free by Remark~\ref{rem:abelian}, and one recovers the
classical triple $\{\mathrm{id},\sigma,\sigma+\mathrm{id}\}$ attached to a strong complete
mapping.
\medskip

Consequently, combining Theorem~\ref{thm:main} with Proposition~\ref{prop:mols} gives:

\begin{cor}\label{cor:mols-3groups}
Let $G$ be a noncyclic $3$-group not isomorphic to $M_{3^{r}}$ $(r\ge4)$.  Then there exist
three mutually orthogonal Latin squares of order $|G|$ based on $G$, namely
$L_{\mathrm{id}}$, $L_{\sigma}$ and $L_{\Dp_\sigma}$ for any $\sigma\in\CB(G)$.
\end{cor}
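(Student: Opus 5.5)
The corollary is a direct combination of the Main Theorem (Theorem~\ref{thm:main}) with Proposition~\ref{prop:mols}. My plan is as follows.

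First, let $G$ be a noncyclic $3$-group with $G\not\cong M_{3^{r}}$ for $r\ge4$. By Theorem~\ref{thm:main}, the set $\CB(G)$ is nonempty, so I fix some $\sigma\in\CB(G)$.

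Second, I apply the implication (1)$\Rightarrow$(2) of Proposition~\ref{prop:mols} to this $\sigma$. It says that $L_{\mathrm{id}}$, $L_\sigma$ and $L_{\Dp_\sigma}$ are Latin squares of order $|G|$ and are pairwise orthogonal. Concretely, (T1) makes each of them Latin, since $\mathrm{id}$, $\sigma$ and $\Dp_\sigma$ are bijections. By (T2), the three orthogonalities are witnessed by the bijections $\Dm_\sigma$, $\sigma$ and $\Dc_\sigma$.

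Third, all three squares are translation squares $L_\theta(r,c)=c\,\theta(r)$ built from the multiplication of $G$, so they are based on $G$ in the sense of Subsection~\ref{subsec:orth-graph}. This completes the argument.

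There is no real obstacle here. All of the difficulty lies in Theorem~\ref{thm:main}, which is taken as given. The one point to check is that the three squares are genuinely distinct, since otherwise they would not form a set of three MOLS. This follows because $L_\theta=L_\varphi$ forces $\theta=\varphi$, and orthogonality excludes equality: when $|G|>1$, a square is never orthogonal to itself, because the witness $w(\theta,\theta)$ is constant.
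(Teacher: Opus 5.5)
Your proposal is correct and is exactly the paper's argument: Theorem~\ref{thm:main} gives $\CB(G)\neq\varnothing$, and the implication (1)$\Rightarrow$(2) of Proposition~\ref{prop:mols}, applied to an arbitrary $\sigma\in\CB(G)$, yields the three mutually orthogonal squares $L_{\mathrm{id}}$, $L_\sigma$, $L_{\Dp_\sigma}$. Your additional distinctness check is also correct: $w(\theta,\theta)$ is constant and $|G|>1$, so orthogonality forces the squares to be pairwise distinct. The paper leaves this point implicit, so including it does no harm.
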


\medskip

\begin{cor}\label{cor:cb-witness}
Let $G$ be any finite group.  For a bijection $\sigma$ of $G$ the three maps of
Definition~\ref{def:scm_bc} are the witnesses
\[
w(\mathrm{id},\sigma)=\Dm,\qquad w(\iota,\sigma)=\Dp,\qquad w(\sigma,\Dp)=\Dc,
\]
and
\[
\sigma\in\SCM(G)\iff \sigma\sim\mathrm{id}\ \text{and}\ \sigma\sim\iota,
\qquad
\sigma\in\CB(G)\iff \sigma\sim\mathrm{id},\ \sigma\sim\iota\ \text{and}\
\sigma\sim w(\iota,\sigma).
\]
\end{cor}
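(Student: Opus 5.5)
This is a direct computation of the three witnesses from Definition~\ref{def:perm-graph}, followed by reading off the equivalences from Definition~\ref{def:scm_bc}. The only care needed is to keep composition and pointwise products apart, and to note that each witness is taken between genuine vertices of $\Perm(G)$, that is, between bijections.

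First compute the witnesses. With $\iota(r)=r^{-1}$ we have $\iota$ a bijection, so it is a vertex.
\begin{itemize}
\item $w(\mathrm{id},\sigma)(r)=r^{-1}\sigma(r)=\Dm(r)$.
\item $w(\iota,\sigma)(r)=\iota(r)^{-1}\sigma(r)=(r^{-1})^{-1}\sigma(r)=r\,\sigma(r)=\Dp(r)$.
\item $w(\sigma,\Dp)(r)=\sigma(r)^{-1}\,r\,\sigma(r)=\Dc(r)$, as in the third line of the proof of Proposition~\ref{prop:mols}.
\end{itemize}
The last witness only makes sense when $\Dp$ is a vertex, i.e.\ a bijection. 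Since $w(\iota,\sigma)=\Dp$, we may also write this witness as $w(\sigma,w(\iota,\sigma))$.

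Next derive the equivalences. By Definition~\ref{def:perm-graph}, $\sigma\sim\mathrm{id}$ holds iff $\Dm$ is a bijection. Adjacency is symmetric, because $w(\varphi,\theta)$ is the pointwise inverse of $w(\theta,\varphi)$. Likewise $\sigma\sim\iota$ holds iff $\Dp$ is a bijection. Together with the standing assumption that $\sigma$ is a bijection, these two conditions are exactly the definition of $\sigma\in\SCM(G)$.

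For the $\CB$ statement, suppose $\sigma\sim\iota$. Then $\Dp=w(\iota,\sigma)$ is a bijection, hence a vertex. So $\sigma\sim w(\iota,\sigma)$ is meaningful, and it holds iff $\Dc$ is a bijection. Combining this with the $\SCM$ characterisation gives precisely $\sigma\in\CB(G)$. Conversely, if $\sigma\in\CB(G)$, all three maps are bijections, so all three adjacencies hold.

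There is no real obstacle here. The one point needing a remark is the order of quantifiers: the third adjacency presupposes the second, so that $w(\iota,\sigma)$ is a vertex.
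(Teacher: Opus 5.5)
Your proposal is correct and matches the paper's proof. Both compute the three witnesses pointwise from Definition~\ref{def:perm-graph} and then read off the $\SCM$ and $\CB$ characterisations from Definition~\ref{def:scm_bc}. You also make the same observation the paper makes with ``Given these'': the adjacency $\sigma\sim w(\iota,\sigma)$ is meaningful only once $\Dp$ is known to be a bijection.
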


\begin{proof}
The three witnesses are the computations $w(\mathrm{id},\sigma)(r)=r^{-1}\sigma(r)$,
$w(\iota,\sigma)(r)=r\,\sigma(r)$ and $w(\sigma,\Dp)(r)=\sigma(r)^{-1}r\,\sigma(r)$, none
of which uses the order of $G$.  By definition $\sigma\sim\mathrm{id}$ means that
$\Dm$ is a bijection and $\sigma\sim\iota$ that $\Dp$ is one, which together with the
bijectivity of $\sigma$ is Definition~\ref{def:scm_bc} for a strong complete mapping.
Given these, $w(\iota,\sigma)=\Dp$ and $\sigma\sim\Dp$ means that $\Dc$ is a bijection.
\end{proof}

Thus a colouring bijection is a vertex adjacent to the Cayley table and to $\iota$, and
adjacent as well to the witness of its own edge with $\iota$: a closure condition inside
$\Perm(G)$ that cannot even be stated in $\Orth(G)$, since $\Dp$ and $\Dm$ need not be
vertices there.

\begin{prop}\label{prop:bowtie}
Let $G$ be a finite group and let $\sigma$ be a bijection of $G$.  Then
$\sigma\in\CB(G)$ if and only if
\[
\{\mathrm{id},\sigma,\Dp\}
\qquad\text{and}\qquad
\{\iota,\sigma,\Dm\}
\]
are triangles of $\Perm(G)$.  Their six edges are witnessed by
\[
\begin{aligned}
w(\mathrm{id},\sigma)&=\Dm, &\qquad w(\mathrm{id},\Dp)&=\sigma, &\qquad w(\sigma,\Dp)&=\Dc,\\
w(\iota,\sigma)&=\Dp, &\qquad w(\iota,\Dm)&=\sigma, &\qquad w(\sigma,\Dm)&=(\Dc)^{-1},
\end{aligned}
\]
and only three of them carry information: given that $\Dp$ and $\Dm$ are bijections, the
edges $\mathrm{id}\sim\Dp$, $\iota\sim\Dm$ and $\sigma\sim\Dm$ hold automatically.  Every
colouring bijection therefore produces two triples of mutually orthogonal Latin squares
based on $G$, one containing the Cayley table and one not.  If $3$ divides $|G|$, the two
triangles meet in $\sigma$ alone.  If $|G|$ is odd, the further edges
$\mathrm{id}\sim\iota$ and $\Dp\sim\Dm$ are present and the bowtie closes into the wheel
of Proposition~\ref{prop:odd-triangles} below. If $|G|$ is even, $\mathrm{id}\sim\iota$ is never
an edge, since $r\mapsto r^{-2}$ is then not injective.  If $\exp(G)\le2$ the picture
collapses altogether, since then $\iota=\mathrm{id}$ and $\Dp=\Dm$.
\end{prop}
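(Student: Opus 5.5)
The plan is to compute all six witnesses directly from Definition~\ref{def:perm-graph}, then read off the equivalence and the auxiliary claims. With $\iota(r)=r^{-1}$, the four witnesses
\[
w(\mathrm{id},\sigma)(r)=r^{-1}\sigma(r)=\Dm(r),\qquad w(\iota,\sigma)(r)=r\,\sigma(r)=\Dp(r),
\]
\[
w(\mathrm{id},\Dp)(r)=r^{-1}r\sigma(r)=\sigma(r),\qquad w(\sigma,\Dp)(r)=\sigma(r)^{-1}r\sigma(r)=\Dc(r)
\]
are already in Proposition~\ref{prop:mols} and Corollary~\ref{cor:cb-witness}. The other two are
\[
w(\iota,\Dm)(r)=r\,r^{-1}\sigma(r)=\sigma(r),\qquad w(\sigma,\Dm)(r)=\sigma(r)^{-1}r^{-1}\sigma(r)=\bigl(\Dc(r)\bigr)^{-1}.
\]
Since pointwise inversion preserves bijectivity, $w(\sigma,\Dm)$ is a bijection exactly when $\Dc$ is.

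\textbf{The equivalence.} The vertex conditions require $\sigma$, $\Dp$, $\Dm$ to be bijections; $\mathrm{id}$ and $\iota$ are bijections automatically. Given these, the edges with witness $\sigma$ hold for free, namely $\mathrm{id}\sim\Dp$ and $\iota\sim\Dm$. The edges $\mathrm{id}\sim\sigma$ and $\iota\sim\sigma$ reduce to the bijectivity of $\Dm$ and $\Dp$. The remaining two edges $\sigma\sim\Dp$ and $\sigma\sim\Dm$ are both equivalent to the bijectivity of $\Dc$. So both triples are triangles if and only if $\sigma,\Dp,\Dm,\Dc$ are bijections, which is $\sigma\in\CB(G)$. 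This also gives the "only three carry information" remark. The MOLS statement then follows from Lemma~\ref{lem:cliques-mols}(1).

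\textbf{The two triangles meet only in $\sigma$ when $3\mid|G|$.} This step needs the most care. We must rule out four coincidences.
\begin{itemize}
\item $\mathrm{id}=\iota$: then every element satisfies $r^2=e$, so $\exp(G)\le 2$, impossible when $3\mid |G|$.
\item $\mathrm{id}=\Dm$: this means $\sigma(r)=r^2$, so $\Dp(r)=r^3$, which is not injective when $3\mid|G|$ by Cauchy.
\item $\Dp=\iota$: this means $\sigma(r)=r^{-2}$, so $\Dm(r)=r^{-3}$, again not injective.
\item $\Dp=\Dm$: this gives $r\sigma(r)=r^{-1}\sigma(r)$, so $r^2=e$ for all $r$, impossible.
\end{itemize}
We also need $\sigma$ to differ from $\mathrm{id}$, $\Dp$, $\iota$, $\Dm$, which the triangle structure forces as follows:
\begin{itemize}
\item $\sigma\ne\mathrm{id}$ and $\sigma\ne\Dp$ because $\Perm(G)$ has no loops (Lemma~\ref{lem:perm-cayley}(2)) and $\sigma$ is adjacent to both.
\item $\sigma\ne\iota$ and $\sigma\ne\Dm$ hold for the same reason, since $\sigma$ is adjacent to both in the second triangle.
\end{itemize}
Hence the vertex sets intersect exactly in $\{\sigma\}$. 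The main obstacle is keeping track of which coincidences need the prime $3$ and which need only $\exp(G)>2$.

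\textbf{Parity cases.} For $|G|$ odd:
\begin{itemize}
\item $w(\mathrm{id},\iota)(r)=r^{-2}$ is a bijection, so $\mathrm{id}\sim\iota$.
\item $w(\Dp,\Dm)(r)=\sigma(r)^{-1}r^{-1}\cdot r^{-1}\sigma(r)=\sigma(r)^{-1}r^{-2}\sigma(r)$. This is $\Dc$ composed with the squaring-inverse map, i.e.\ conjugation preserves powers: $(\sigma(r)^{-1}r\sigma(r))^{-2}$. It is a bijection because $\Dc$ is and $x\mapsto x^{-2}$ is.
\end{itemize}
For $|G|$ even, $r\mapsto r^{-2}$ kills an involution, so $\mathrm{id}\not\sim\iota$. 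If $\exp(G)\le2$, then $\iota=\mathrm{id}$ and $\Dp=\Dm$ directly from the definitions.
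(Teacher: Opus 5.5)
Your proof is correct and follows essentially the paper's route. It uses the same six witness computations, the same reduction to the bijectivity of $\sigma,\Dp,\Dm,\Dc$, and the same four cross-coincidences $\mathrm{id}=\iota$, $\mathrm{id}=\Dm$, $\Dp=\iota$, $\Dp=\Dm$. The differences are minor: you exclude $\sigma\in\{\mathrm{id},\iota,\Dp,\Dm\}$ via looplessness of $\Perm(G)$ rather than the paper's direct check, and you verify the parity and exponent-$2$ clauses inline; your $w(\Dp,\Dm)=(\Dc)^{-2}$ is the paper's $w(\Dm,\Dp)=(\Dc)^{2}$ from Proposition~\ref{prop:odd-triangles}.

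Your computation also gives the accurate form of the ``only three carry information'' clause: $\sigma\sim\Dm$ is equivalent to $\sigma\sim\Dp$, since both say $\Dc$ is a bijection. It is not automatic once $\Dp$ and $\Dm$ are bijections, as the statement's wording suggests. That literal reading fails for any strong complete mapping that is not a colouring bijection, cf.\ Remark~\ref{rem:scm-not-cb}.
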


\begin{proof}
Suppose first that $\sigma\in\CB(G)$.  The six witnesses displayed above are computed directly, the parity of $|G|$ playing no
role:
$r^{-1}\sigma(r)=\Dm(r)$, $r^{-1}r\sigma(r)=\sigma(r)$,
$\sigma(r)^{-1}r\sigma(r)=\Dc(r)$ and, using $\iota(r)^{-1}=r$, $r\sigma(r)=\Dp(r)$,
$r\,r^{-1}\sigma(r)=\sigma(r)$, $\sigma(r)^{-1}r^{-1}\sigma(r)=\Dc(r)^{-1}$.  All six are
bijections, so both triples are triangles.

Conversely, suppose that both triples are triangles.  Then $\mathrm{id}\sim\sigma$,
$\iota\sim\sigma$ and $\sigma\sim\Dp$, which by the same identities say that $\Dm$,
$\Dp$ and $\Dc$ are bijections. As $\sigma$ is one by hypothesis, $\sigma\in\CB(G)$.  The
same three identities show that the remaining three edges follow from the bijectivity of
$\Dp$ and $\Dm$.

For the intersection, note first that $\sigma\notin\{\mathrm{id},\iota,\Dp,\Dm\}$: the
equalities $\sigma=\Dp$ and $\sigma=\Dm$ force $r=e$ for all $r$, while $\sigma=\mathrm{id}$
makes $\Dm$ and $\sigma=\iota$ makes $\Dp$ constant.  A further common vertex could only
arise from $\mathrm{id}=\iota$ or $\Dp=\Dm$, both of which force $\exp(G)\le2$, or from
$\mathrm{id}=\Dm$, that is $\sigma(r)=r^{2}$, or from $\Dp=\iota$, that is
$\sigma(r)=r^{-2}$.  In the last two cases $\Dp(r)=r^{3}$ and $\Dm(r)=r^{-3}$
respectively, which are not injective when $3$ divides $|G|$.
\end{proof}

\begin{rem}\label{rem:scm-not-cb}
If $\sigma$ is a strong complete mapping that is not a colouring bijection, the two spokes
$\sigma\sim\Dp$ and $\sigma\sim\Dm$ are absent. Both are governed by $\Dc$. Neither
triangle of the bowtie survives, and what is left is the path
$\Dp-\mathrm{id}-\sigma-\iota-\Dm$. This is what the third condition in
Definition~\ref{def:scm_bc} buys. Figure~\ref{fig:orth-even} shows the two pictures side
by side.

For $G=M_{16}$ the bowtie is exactly the local picture. An exhaustive check of all $4096$
colouring bijections of $M_{16}$ \cite{code} shows that none of the four remaining pairs
$\mathrm{id}\sim\iota$, $\mathrm{id}\sim\Dm$, $\iota\sim\Dp$, $\Dp\sim\Dm$ is ever an
edge. In general the bowtie is only the guaranteed part. In $D_8\times C_2$ some colouring
bijections do satisfy $\mathrm{id}\sim\Dm$ and $\iota\sim\Dp$. The two conditions coincide
there because that group has exponent $4$, so that $r^{-2}=r^{2}$.
\end{rem}
\medskip

The first equivalence of Corollary~\ref{cor:cb-witness} recovers a known observation.  For
$|G|$ odd the inversion map $\iota$ is the power orthomorphism $\varphi_{-1}$, and an
orthomorphism of $G$ is a strong complete mapping precisely when it is orthogonal to
$\varphi_{-1}$ \cite[\S2]{E4}.

\medskip

For groups of even order the global picture degenerates as well.  By
Lemma~\ref{lem:perm-cayley} the graph $\Perm(G)$ is
regular of degree $|\Orth(G)|$, so it is edgeless exactly when $G$ admits no complete
mapping, which by Hall--Paige happens precisely when the Sylow $2$-subgroup of $G$ is
nontrivial and cyclic. For $C_6$ and $S_3$ we get $\Orth(G)=\varnothing$, and no Latin
square based on the group is orthogonal to another, so that $\CB(G)=\varnothing$ as well.  For $G=C_2\times C_2$ the graph
$\Perm(G)$ has $24$ vertices and is $8$-regular, and $\Orth(G)$ is the complete bipartite
graph on $4+4$ vertices. Since it is triangle-free, $\omega(\Orth(G))=2$ and hence
$\omega(\Perm(G))=3$, in accordance with the maximum number $N(4)=3$ of mutually
orthogonal Latin squares of order $4$.  For $G=D_8$ and for $G=Q_8$ the graph $\Perm(G)$ is $384$-regular, but
$\Orth(G)$ has no edges at all, so $\omega(\Perm(G))=2$ and no three Latin squares based
on these groups are mutually orthogonal. Here $384$ counts orthomorphisms in the
unnormalised sense used throughout, that is, $48$ normalised ones.  These graphs, for all
five groups of order $8$, are determined in \cite{E6}.  Finally, in a group of exponent
$2$ one has $\Dp=\Dm$, so even the bowtie of Proposition~\ref{prop:bowtie} collapses.
This is one of the places where the order of $G$ matters.

\begin{figure}[htbp]
\centering
\begin{tikzpicture}[scale=0.95,
  vtx/.style ={circle,draw,thick,fill=black!6,inner sep=0pt,minimum size=8mm,font=\small},
  anch/.style={rectangle,draw,thick,fill=black!12,inner sep=1pt,minimum size=8mm,font=\small},
  ed/.style  ={thick},
  gone/.style={thick,densely dotted,black!30},
  lb/.style  ={font=\scriptsize,inner sep=1pt,fill=white},
  pl/.style  ={font=\footnotesize\itshape}
]
\def\bow{
  \coordinate (Bdp) at (-3.1,0.95);
  \coordinate (Bid) at (-1.55,-0.35);
  \coordinate (Bsi) at (0,0.95);
  \coordinate (Bio) at (1.55,-0.35);
  \coordinate (Bdm) at (3.1,0.95);
}
% ---------- (a) SCM, not CB ----------
\begin{scope}
  \bow
  \draw[ed] (Bdp)--(Bid) node[lb,midway]{$\sigma$};
  \draw[ed] (Bid)--(Bsi) node[lb,midway]{$\Dm$};
  \draw[ed] (Bsi)--(Bio) node[lb,midway]{$\Dp$};
  \draw[ed] (Bio)--(Bdm) node[lb,midway]{$\sigma$};
  \draw[gone] (Bsi)--(Bdp); \draw[gone] (Bsi)--(Bdm);
  \node[vtx]  at (Bdp) {$\Dp$};
  \node[anch] at (Bid) {$\mathrm{id}$};
  \node[vtx]  at (Bsi) {$\sigma$};
  \node[vtx]  at (Bio) {$\iota$};
  \node[vtx]  at (Bdm) {$\Dm$};
  \node[pl,align=center] at (0,-1.5) {(a) $\sigma\in\SCM(G)\setminus\CB(G)$: a path};
\end{scope}
% ---------- (b) CB ----------
\begin{scope}[yshift=-3.5cm]
  \bow
  \draw[ed] (Bdp)--(Bid) node[lb,midway]{$\sigma$};
  \draw[ed] (Bid)--(Bsi) node[lb,midway]{$\Dm$};
  \draw[ed] (Bsi)--(Bio) node[lb,midway]{$\Dp$};
  \draw[ed] (Bio)--(Bdm) node[lb,midway]{$\sigma$};
  \draw[ed] (Bsi)--(Bdp) node[lb,midway]{$\Dc$};
  \draw[ed] (Bsi)--(Bdm) node[lb,midway]{$(\Dc)^{-1}$};
  \node[vtx]  at (Bdp) {$\Dp$};
  \node[anch] at (Bid) {$\mathrm{id}$};
  \node[vtx]  at (Bsi) {$\sigma$};
  \node[vtx]  at (Bio) {$\iota$};
  \node[vtx]  at (Bdm) {$\Dm$};
  \node[pl,align=center] at (0,-1.5) {(b) $\sigma\in\CB(G)$: a bowtie};
\end{scope}
\end{tikzpicture}

\caption{Groups of even order: what the third condition buys
(Proposition~\ref{prop:bowtie} and Remark~\ref{rem:scm-not-cb}), drawn for $G=M_{16}$.
Dotted lines are the two absent spokes.  By Proposition~\ref{prop:bowtie} the bowtie in
(b) characterises colouring bijections: a bijection $\sigma$ lies in $\CB(G)$ precisely
when this picture can be drawn.}
\label{fig:orth-even}
\end{figure}

\subsection{The five canonical vertices}

For groups of odd order $\mathrm{id}$ and $\iota$ are themselves adjacent, so the first
equivalence of Corollary~\ref{cor:cb-witness} takes the form
\[
\sigma\in\SCM(G)\iff\{\mathrm{id},\iota,\sigma\}\ \text{is a triangle of}\ \Perm(G),
\]
while by Proposition~\ref{prop:bowtie} a colouring bijection is such a $\sigma$ for which
the bowtie closes as well.  Both statements, and the two triangles of the bowtie, sit
inside a single five-vertex configuration.

\begin{prop}\label{prop:odd-triangles}
Let $G$ be a finite group of odd order, let $\sigma\in\CB(G)$ and let $\iota(x)=x^{-1}$.
Then $\mathrm{id},\iota,\Dm,\Dp,\sigma$ are five vertices of $\Perm(G)$ which span a
wheel: the first four form a cycle
\[
\mathrm{id}\ \sim\ \iota\ \sim\ \Dm\ \sim\ \Dp\ \sim\ \mathrm{id},
\]
whose edges are witnessed by
\[
\begin{aligned}
w(\mathrm{id},\iota)&=r^{-2}, &\qquad w(\iota,\Dm)&=\sigma,\\
w(\Dm,\Dp)&=(\Dc)^{2}, &\qquad w(\mathrm{id},\Dp)&=\sigma,
\end{aligned}
\]
while $\sigma$ is adjacent to each of the four, the spokes being witnessed by
\[
\begin{aligned}
w(\mathrm{id},\sigma)&=\Dm, &\qquad w(\iota,\sigma)&=\Dp,\\
w(\sigma,\Dm)&=(\Dc)^{-1}, &\qquad w(\sigma,\Dp)&=\Dc .
\end{aligned}
\]
The two diagonals of the cycle are the only pairs whose adjacency is not automatic, and
each of them joins a vertex $\vartheta$ to the witness $w(\vartheta,\sigma)$ of that
vertex's own edge with $\sigma$:
\[
\mathrm{id}\sim\Dm=w(\mathrm{id},\sigma)
\iff r\mapsto r^{-2}\sigma(r)\ \text{is a bijection},
\]
\[
\iota\sim\Dp=w(\iota,\sigma)
\iff r\mapsto r^{2}\sigma(r)\ \text{is a bijection}.
\]
Consequently the wheel itself, that is, the rim together with the spokes, contains exactly
four triangles, namely $\sigma$ together with a rim edge, and hence four triples of
mutually orthogonal Latin squares based on $G$. Two of these triples contain the Cayley
table and two do not.  In particular the wheel contains no $K_4$: a $K_4$ can arise only
when one of the diagonals is present.
\end{prop}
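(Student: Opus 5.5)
The proof is a direct computation of witnesses, followed by some elementary counting of triangles in the resulting graph.

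\textbf{Vertices.} First we check that the five maps are vertices of $\Perm(G)$, i.e.\ bijections. The maps $\sigma,\Dp,\Dm$ are bijections by Definition~\ref{def:scm_bc}. The map $\iota$ is trivially a bijection, and so is $\mathrm{id}$.

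\textbf{The eight listed edges.} We compute each witness $w(\theta,\varphi)(r)=\theta(r)^{-1}\varphi(r)$ directly.
\begin{itemize}
\item $w(\mathrm{id},\iota)(r)=r^{-2}$, which is a bijection because squaring is bijective when $|G|$ is odd.
\item $w(\iota,\Dm)(r)=r\,r^{-1}\sigma(r)=\sigma(r)$.
\item $w(\mathrm{id},\Dp)=\sigma$.
\item The key rim computation is
\[
w(\Dm,\Dp)(r)=\sigma(r)^{-1}r\cdot r\,\sigma(r)=\bigl(\sigma(r)^{-1}r\,\sigma(r)\bigr)^{2}=\Dc(r)^{2}.
\]
This is a bijection as the composite of $\Dc$ with squaring, again using that $|G|$ is odd.
\end{itemize}
The four spokes $\sigma\sim\mathrm{id},\iota,\Dm,\Dp$ are exactly the witnesses already computed in Proposition~\ref{prop:bowtie}; they are bijections because $\sigma\in\CB(G)$.

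\textbf{The two diagonals.} The remaining pairs are $\{\mathrm{id},\Dm\}$ and $\{\iota,\Dp\}$. Their witnesses are
\[
w(\mathrm{id},\Dm)(r)=r^{-1}r^{-1}\sigma(r)=r^{-2}\sigma(r),\qquad
w(\iota,\Dp)(r)=r\,r\,\sigma(r)=r^{2}\sigma(r),
\]
which gives the two stated equivalences. The identifications $\Dm=w(\mathrm{id},\sigma)$ and $\Dp=w(\iota,\sigma)$ are those of Corollary~\ref{cor:cb-witness}.

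\textbf{Distinctness.} We must show the five vertices are pairwise distinct, so the configuration really is a wheel on five vertices. Proposition~\ref{prop:bowtie} already gives $\sigma\notin\{\mathrm{id},\iota,\Dp,\Dm\}$. Since $|G|$ is odd and $|G|>1$, we have $\exp(G)>2$, hence $\mathrm{id}\ne\iota$ and $\Dp\ne\Dm$.

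The remaining coincidences need care, and I expect this to be the main (minor) obstacle.
\begin{itemize}
\item $\mathrm{id}=\Dm$ means $\sigma(r)=r^2$. Then $\Dp(r)=r^3$ must be bijective, forcing $3\nmid|G|$.
\item $\iota=\Dp$ means $\sigma(r)=r^{-2}$, which again makes $\Dm(r)=r^{-3}$, with the same conclusion.
\item $\mathrm{id}=\Dp$ would make $\sigma$ constant, which is impossible.
\item $\iota=\Dm$ would make $\sigma$ constant, which is impossible.
\end{itemize}
So, as the statement implicitly presumes, distinctness holds whenever $3\mid|G|$, and in particular for the $3$-groups of interest. When $3\nmid|G|$ I would either remark that the coincidences $\sigma=x^{\pm2}$ (as in Corollary~\ref{cor:coprime6}) collapse a diagonal into a vertex, or restrict the wheel assertion accordingly.

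\textbf{Triangles and $K_4$.} The graph consisting of the rim $4$-cycle plus the hub $\sigma$ is the wheel $W_4$. Its triangles must contain $\sigma$, because the rim is a $4$-cycle with no chords. A triangle through $\sigma$ needs two adjacent rim vertices, so the triangles are exactly $\sigma$ together with one of the four rim edges.

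By Lemma~\ref{lem:cliques-mols}(1), each triangle is a triple of MOLS. The triangles using the rim edges $\mathrm{id}\iota$ and $\mathrm{id}\Dp$ contain the Cayley table $\mathrm{id}$; the triangles using $\iota\Dm$ and $\Dm\Dp$ do not.

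Finally, $W_4$ contains no $K_4$: a $K_4$ would need three pairwise adjacent rim vertices, which requires a chord of the $4$-cycle. Hence a $K_4$ can arise only when one of the diagonals is present.
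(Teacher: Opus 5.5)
Your proof is correct and follows the paper's argument: compute the eight witnesses directly, identify the diagonal witnesses $r^{-2}\sigma(r)$ and $r^{2}\sigma(r)$, and count the triangles of the wheel $W_4$ via Lemma~\ref{lem:cliques-mols}(1); you rightly note that odd order is needed for $(\Dc)^{2}$ as well as for $r\mapsto r^{-2}$, which the paper glosses. Your distinctness check, absent from the paper's proof, is a real improvement: when $\gcd(|G|,6)=1$, taking $\sigma(x)=x^{2}$ gives $\Dm=\mathrm{id}$ and taking $\sigma(x)=x^{-2}$ gives $\Dp=\iota$, so the statement's ``five vertices'' tacitly requires $\sigma\neq x^{\pm2}$, and this is automatic when $3\mid|G|$.
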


Here, and in the displays above, a witness is determined only up to pointwise inversion,
according to the order in which the two endpoints are taken.  This is immaterial, since a
map is a bijection precisely when its pointwise inverse is.

\begin{proof}
All five maps are bijections: $\iota$ always, and $\sigma,\Dp,\Dm$ because
$\sigma\in\CB(G)$.  Each edge is verified by computing the witness $w(\theta,\varphi)$, using
$\iota(r)^{-1}=r$:
\[
\begin{aligned}
\mathrm{id}\sim\iota&: r^{-1}r^{-1}=r^{-2}, &
\iota\sim\Dm&: r\,r^{-1}\sigma(r)=\sigma(r),\\
\Dm\sim\Dp&: \sigma(r)^{-1}r\,r\,\sigma(r)=\Dc(r)^{2}, &
\mathrm{id}\sim\Dp&: r^{-1}r\,\sigma(r)=\sigma(r),\\
\mathrm{id}\sim\sigma&: r^{-1}\sigma(r)=\Dm(r), &
\iota\sim\sigma&: r\,\sigma(r)=\Dp(r),\\
\sigma\sim\Dm&: \sigma(r)^{-1}r^{-1}\sigma(r)=\Dc(r)^{-1}, &
\sigma\sim\Dp&: \sigma(r)^{-1}r\,\sigma(r)=\Dc(r).
\end{aligned}
\]
Each of these is a bijection: $r\mapsto r^{-2}$ because $|G|$ is odd, and the others
because $\sigma,\Dp,\Dm,\Dc$ are bijections and inversion is one, so that the maps listed
are, up to inversion, exactly $\sigma$, $\Dm$, $\Dp$, $\Dc$, $(\Dc)^{-1}$ and
$(\Dc)^{2}$.  For the two diagonals the same computation gives
$\mathrm{id}(r)^{-1}\Dm(r)=r^{-2}\sigma(r)$ and $\iota(r)^{-1}\Dp(r)=r^{2}\sigma(r)$,
neither of which follows from $\sigma\in\CB(G)$.  Finally, a wheel with a four-cycle as
rim has exactly four triangles, each consisting of the hub and a rim edge. Two of the rim
edges contain $\mathrm{id}$, and the assertion about Latin squares is
Lemma~\ref{lem:cliques-mols}(1).
\end{proof}

\begin{cor}\label{cor:exp3}
Let $G$ be a group of exponent $3$ with $|G|>3$, and let $\sigma\in\CB(G)$.  Then both
diagonals of the wheel of Proposition~\ref{prop:odd-triangles} are present, so that the
five vertices $\mathrm{id},\iota,\sigma,\Dp,\Dm$ are pairwise distinct and induce a
$K_5$ in $\Perm(G)$.  Consequently
\[
L_{\mathrm{id}},\qquad L_\iota,\qquad L_\sigma,\qquad L_{\Dp},\qquad L_{\Dm}
\]
are five mutually orthogonal Latin squares of order $|G|$ based on $G$.  Moreover
$\CB(G)\neq\varnothing$ for every such $G$.
\end{cor}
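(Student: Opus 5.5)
Given $\sigma\in\CB(G)$, the first part of the statement only needs the two diagonals of the wheel of Proposition~\ref{prop:odd-triangles}. In exponent $3$ we have $r^{3}=e$ for every $r$, hence $r^{-2}=r$ and $r^{2}=r^{-1}$. The two diagonal witnesses therefore become
\[
r\mapsto r^{-2}\sigma(r)=r\,\sigma(r)=\Dp(r),
\qquad
r\mapsto r^{2}\sigma(r)=r^{-1}\sigma(r)=\Dm(r),
\]
and both are bijections because $\sigma\in\CB(G)$. So $\mathrm{id}\sim\Dm$ and $\iota\sim\Dp$. Together with the rim and the spokes of the wheel, all ten pairs among $\mathrm{id},\iota,\sigma,\Dp,\Dm$ are then adjacent.

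Next I check that the five vertices are pairwise distinct. For this it suffices that $\Perm(G)$ has no loops, since adjacent vertices of a simple graph differ. Indeed, $\theta\sim\theta$ would need $r\mapsto\theta(r)^{-1}\theta(r)=e$ to be a bijection, which is impossible when $|G|>1$. One can also check directly that $\mathrm{id}\ne\iota$ and $\Dp\ne\Dm$, because $\exp(G)=3>2$, and the remaining coincidences are excluded as in the proof of Proposition~\ref{prop:bowtie}, since $3$ divides $|G|$. Hence we get a $K_5$, and Lemma~\ref{lem:cliques-mols}(1) turns it into five MOLS based on $G$.

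The substantive claim is that $\CB(G)\neq\varnothing$ for every group of exponent $3$ with $|G|>3$. Such a group is a noncyclic $3$-group, since its order exceeds its exponent. It is also not isomorphic to any $M_{3^{r}}$, because $M_{3^{r}}$ contains an element of order $3^{r-1}\ge 9$. Therefore the Main Theorem (Theorem~\ref{thm:main}) applies and gives a colouring bijection. If one prefers to avoid circularity with later sections, the abelian case $C_3^{n}$ with $n\ge2$ already follows from Theorem~\ref{3-group-abelian}. The nonabelian case is exactly where the lifting argument along a normal $C_3\times C_3$ is needed, with base case the Heisenberg group $H_3$ of order $27$ found by computer. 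The main obstacle is this nonabelian existence, and I would defer it entirely to Theorem~\ref{thm:main} rather than attempt a direct construction.

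As a possible shortcut for class $2$, one could try a direct formula such as $\sigma(x)=x^{-1}$ twisted by a fixed-point-free automorphism. This fails, however, because $\Dp$ for $\sigma=\iota$ is constant, so I would not pursue it and would simply cite the Main Theorem.
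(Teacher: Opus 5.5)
Your proposal is correct and follows the paper's proof. The identities $r^{-2}=r$ and $r^{2}=r^{-1}$ turn the two diagonal witnesses into $\Dp$ and $\Dm$, and $\CB(G)\neq\varnothing$ is obtained from Theorem~\ref{thm:main} by the same noncyclic/exponent argument. The only difference is the distinctness step: the paper rules out each coincidence by hand, whereas you note that $\Perm(G)$ has no loops for $|G|>1$ (the witness $w(\theta,\theta)$ is constant), so pairwise distinctness follows at once from the ten adjacencies, which is a cleaner argument.
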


\begin{proof}
In a group of exponent $3$ one has $r^{-2}=r$ and $r^{2}=r^{-1}$, so the two conditional
maps of Proposition~\ref{prop:odd-triangles} are
\[
r\mapsto r^{-2}\sigma(r)=r\,\sigma(r)=\Dp(r),
\qquad
r\mapsto r^{2}\sigma(r)=r^{-1}\sigma(r)=\Dm(r),
\]
and both are bijections because $\sigma\in\CB(G)$.  Hence all ten pairs are edges.

For the distinctness, note first that $\sigma\neq\mathrm{id}$ and $\sigma\neq\iota$, since
otherwise $\Dm$ or $\Dp$ would be the constant map $r\mapsto e$.  Next, $\Dp=\sigma$ or
$\Dm=\sigma$ would force $r=e$ for every $r$, while $\Dp=\mathrm{id}$ would make $\sigma$
constant.  Each of $\Dp=\iota$, $\Dm=\iota$ and $\Dm=\mathrm{id}$ gives $\sigma=\mathrm{id}$,
using $r^{-2}=r$, which has just been excluded.  Finally $\mathrm{id}=\iota$ and
$\Dp=\Dm$ both force $r^{2}=e$ for all $r$, contradicting $\exp(G)=3$ and $|G|>3$.
Distinct bijections give distinct translation squares, so
Lemma~\ref{lem:cliques-mols}(1) yields the five mutually orthogonal squares.

A group of exponent $3$ with $|G|>3$ is a noncyclic $3$-group, and it is not isomorphic
to any $M_{3^{r}}$, whose exponent is $3^{\,r-1}\ge9$. Hence $\CB(G)\neq\varnothing$ by
Theorem~\ref{thm:main}.
\end{proof}

Figure~\ref{fig:orth-local-odd} shows this configuration.  Each node is a Latin
translation square based on $G$ and each line an orthogonal pair, the square node being
the Cayley table. Since $\Orth(G)$ is the neighbourhood of $\mathrm{id}$, a node lies in
$\Orth(G)$ exactly when it is joined to the square node.  Panel (a) is the wheel itself,
with the four spokes labelled by their witnesses and the two conditional diagonals
dashed. Panel (b) is the situation of Corollary~\ref{cor:exp3} and panel (c) that of
Remark~\ref{rem:Dm-not-vertex} below.  A wheel has no $K_4$, which is why
Corollary~\ref{cor:mols-3groups} yields three mutually orthogonal squares and not four.

\begin{figure}[htbp]
\centering
\begin{tikzpicture}[scale=0.95,
  vtx/.style ={circle,draw,thick,fill=black!6,inner sep=0pt,minimum size=8mm,font=\small},
  anch/.style={rectangle,draw,thick,fill=black!12,inner sep=1pt,minimum size=8mm,font=\small},
  ed/.style  ={thick},
  cond/.style={thick,densely dashed,black!55},
  lb/.style  ={font=\scriptsize,inner sep=1pt,fill=white},
  pl/.style  ={font=\footnotesize\itshape}
]
\def\rim{
  \coordinate (Aid) at (-1.65,1.45);
  \coordinate (Aio) at (1.65,1.45);
  \coordinate (Adm) at (1.65,-1.45);
  \coordinate (Adp) at (-1.65,-1.45);
  \coordinate (Asi) at (0,0);
}
% ---------- (a) ----------
\begin{scope}
  \rim
  \draw[ed] (Aid)--(Aio) node[lb,midway]{$r^{-2}$};
  \draw[ed] (Aio)--(Adm) node[lb,midway]{$\sigma$};
  \draw[ed] (Adm)--(Adp) node[lb,midway]{$(\Dc)^{2}$};
  \draw[ed] (Adp)--(Aid) node[lb,midway]{$\sigma$};
  \draw[ed] (Asi)--(Aid) node[lb,pos=.55]{$\Dm$};
  \draw[ed] (Asi)--(Aio) node[lb,pos=.55]{$\Dp$};
  \draw[ed] (Asi)--(Adm) node[lb,pos=.55]{$(\Dc)^{-1}$};
  \draw[ed] (Asi)--(Adp) node[lb,pos=.55]{$\Dc$};
  \draw[cond] (Aid) to[bend left=55] (Adm);
  \draw[cond] (Aio) to[bend left=55] (Adp);
  \node[anch] at (Aid) {$\mathrm{id}$};
  \node[vtx]  at (Aio) {$\iota$};
  \node[vtx]  at (Adm) {$\Dm$};
  \node[vtx]  at (Adp) {$\Dp$};
  \node[vtx]  at (Asi) {$\sigma$};
  \node[pl,align=center] at (0,-2.5) {(a) every $\sigma\in\CB(G)$};
\end{scope}
% ---------- (b) ----------
\begin{scope}[xshift=5.3cm]
  \rim
  \draw[ed] (Aid)--(Aio); \draw[ed] (Aio)--(Adm); \draw[ed] (Adm)--(Adp); \draw[ed] (Adp)--(Aid);
  \draw[ed] (Asi)--(Aid); \draw[ed] (Asi)--(Aio); \draw[ed] (Asi)--(Adm); \draw[ed] (Asi)--(Adp);
  \draw[ed] (Aid) to[bend left=55] (Adm);
  \draw[ed] (Aio) to[bend left=55] (Adp);
  \node[anch] at (Aid) {$\mathrm{id}$};
  \node[vtx]  at (Aio) {$\iota$};
  \node[vtx]  at (Adm) {$\Dm$};
  \node[vtx]  at (Adp) {$\Dp$};
  \node[vtx]  at (Asi) {$\sigma$};
  \node[pl,align=center] at (0,-2.5) {(b) $\exp(G)=3$:\\ $K_5$, hence $5$ MOLS};
\end{scope}
% ---------- (c) ----------
\begin{scope}[xshift=10.6cm]
  \rim
  \draw[ed] (Aid)--(Aio); \draw[ed] (Aio)--(Adm); \draw[ed] (Adm)--(Adp); \draw[ed] (Adp)--(Aid);
  \draw[ed] (Asi)--(Aid); \draw[ed] (Asi)--(Aio); \draw[ed] (Asi)--(Adm); \draw[ed] (Asi)--(Adp);
  \node[anch] at (Aid) {$\mathrm{id}$};
  \node[vtx]  at (Aio) {$\iota$};
  \node[vtx]  at (Adm) {$\Dm$};
  \node[vtx]  at (Adp) {$\Dp$};
  \node[vtx]  at (Asi) {$\sigma$};
  \node[pl,align=center] at (0,-2.5) {(c) $G=M_{27}$:\\ pure wheel};
\end{scope}
\end{tikzpicture}

\caption{The wheel spanned by a colouring bijection, $|G|$ odd
(Proposition~\ref{prop:odd-triangles}); dashed lines are the two conditional diagonals.}
\label{fig:orth-local-odd}
\end{figure}

\begin{rem}\label{rem:Dm-not-vertex}
Of the four triangles of the wheel, the two containing $\mathrm{id}$ correspond to edges
of $\Orth(G)$ together with the anchor. The other two need not lie in $\Orth(G)$ at all,
since $\Dm$ is a vertex of $\Orth(G)$ exactly when the diagonal
$\mathrm{id}\sim\Dm$ is present.  Both conditional edges are present whenever $\exp(G)=3$, by
Corollary~\ref{cor:exp3}, and then $\{\mathrm{id},\iota,\sigma,\Dp,\Dm\}$ induces a $K_5$
in $\Perm(G)$.  Both can also fail: for the colouring bijection $\sigma$ of $M_{27}$ displayed in
Table~\ref{tab:H3-L3-add-fixing-id}, and for every one of the $1458$ bijections in its
$\Hol$-orbit, the map $r\mapsto r^{-2}\sigma(r)$ is not injective \cite{code}, so there
$\Dm\notin\Orth(M_{27})$ although $\{\sigma,\Dp,\Dm\}$ is still a triangle of
$\Perm(M_{27})$.  See Figure~\ref{fig:orth-local-odd}.
\end{rem}

\FloatBarrier

\section{Colouring bijections as triple transversals}\label{sec:quandle}

We give one more interpretation of the definition of a colouring bijection. It rests on
three observations. First, $\Dp$, $\Dm$ and $\Dc$ are the diagonal maps of three binary
operations on $G$. Second, the corresponding condition is a simultaneous transversal
condition in three arrays. These arrays are the multiplication table, the division table,
and the operation table of the \emph{conjugation quandle} of $G$. Third, the last of these
arrays is not a Latin square. This places the notion outside the classical Latin-square
framework in which complete mappings live.

\subsection{Three operations}

Besides its multiplication, every finite group is equipped with two further binary
operations that will concern us:
\[
x\cdot y,
\qquad
x\backslash y:=x^{-1}y,
\qquad
x\triangleright y:=y^{-1}xy .
\]
The second is the \emph{left division} of the quasigroup $(G,\cdot)$. The
third is conjugation, written as a binary operation.  For a bijection
$\sigma\colon G\to G$ the three maps of Definition~\ref{def:scm_bc} are
precisely the diagonals of these operations along $\sigma$:
\begin{equation}\label{eq:three-diagonals}
\Dp_\sigma(x)=x\cdot\sigma(x),
\qquad
\Dm_\sigma(x)=x\backslash\sigma(x),
\qquad
\Dc_\sigma(x)=x\triangleright\sigma(x).
\end{equation}
Thus a complete mapping is a bijection whose diagonal along the multiplication is again a
bijection. A strong complete mapping asks the same for the multiplication and for the
division. A colouring bijection, in turn, asks it for all three operations.

\subsection{The conjugation quandle}

The operation $\triangleright$ is neither associative nor unital, so
$(G,\triangleright)$ is not a group.  It is, however, an instance of a
classical algebraic structure.

\begin{defn}\label{def:quandle}
A \emph{quandle} is a set $X$ together with a binary operation
$\triangleright$ satisfying, for all $x,y,z\in X$:
\begin{enumerate}
\item[\textup{(Q1)}] $x\triangleright x=x$ \hfill (idempotency);
\item[\textup{(Q2)}] the right translation
      $R_y\colon x\mapsto x\triangleright y$ is a bijection of $X$
      \hfill (right invertibility);
\item[\textup{(Q3)}] $(x\triangleright y)\triangleright z
      =(x\triangleright z)\triangleright(y\triangleright z)$
      \hfill (right self-distributivity).
\end{enumerate}
\end{defn}

Quandles were introduced independently by Joyce \cite{J} and Matveev
\cite{Mt} as algebraic invariants of knots. Axioms (Q1)--(Q3) are the
algebraic shadow of the three Reidemeister moves.  Every group provides an
example, and it is the one we need.

\begin{ex}\label{ex:conj-quandle}
For a group $G$ the pair $\Conj(G):=(G,\triangleright)$ with
$x\triangleright y=y^{-1}xy$ is a quandle, the \emph{conjugation quandle} of $G$.
Axiom (Q1) reads $x^{-1}xx=x$.  For (Q2), the right translation
$R_y=\rho_{y^{-1}}$ is an inner automorphism of $G$, hence a bijection.  For (Q3),
both sides equal $z^{-1}y^{-1}xyz$: on the left
$(x\triangleright y)\triangleright z=z^{-1}(y^{-1}xy)z$, while on the right
\[
(x\triangleright z)\triangleright(y\triangleright z)
=(z^{-1}yz)^{-1}(z^{-1}xz)(z^{-1}yz)
=z^{-1}y^{-1}z\cdot z^{-1}xz\cdot z^{-1}yz
=z^{-1}y^{-1}xyz .
\]
\end{ex}

Two features of $\Conj(G)$ matter here.

First, axiom (Q2) states that every right translation $R_y=\rho_{y^{-1}}$ is a
permutation of $G$.  This is precisely what turns the third defining condition into a
transversal condition, as we shall see in Subsection~\ref{subsec:triple}.

Second, the orbits of the group generated by the right translations
$R_y$ --- the inner automorphism group of $G$ --- are the conjugacy
classes.  The quandle $\Conj(G)$ therefore decomposes into orbits indexed
by the conjugacy classes of $G$, and this decomposition governs the
third condition (Proposition~\ref{prop:class-local}).  If $G$ is abelian,
then $x\triangleright y=x$, so $\Conj(G)$ is the \emph{trivial} quandle, all
its right translations are the identity, and the third condition carries no
information. This is Remark~\ref{rem:abelian} seen from the quandle side.

\subsection{The triple transversal theorem}\label{subsec:triple}

The three conditions of Definition~\ref{def:scm_bc} become a single condition on a
single set of cells, once the three operations of \eqref{eq:three-diagonals} are laid
out as arrays.  Consider the three
arrays on $G\times G$
\begin{equation}\label{eq:three-tables}
L(r,c)=rc,
\qquad
L^{-}(r,c)=r^{-1}c,
\qquad
Q(r,c)=c^{-1}rc=r\triangleright c ,
\end{equation}
the multiplication table, the division table and the operation table of
$\Conj(G)$.  Here $L(r,c)=rc$ is the transpose of the translation square
$L_{\mathrm{id}}(r,c)=cr$ of Section~\ref{sec:mols}. The transpose is used so that the
diagonal $L(x,\sigma(x))=x\sigma(x)$ reproduces $\Dp_\sigma$ directly.  The notion of a
transversal recalled in the Introduction makes sense verbatim for an arbitrary array on
$G\times G$. We call such an array \emph{column-Latin} if each of its columns is a
permutation of $G$.  The tables $L$ and $L^{-}$ are Latin
squares (they are two of the six parastrophes of the quasigroup
$(G,\cdot)$).  The table $Q$ is \emph{not} a Latin square: its column $c$
is the inner automorphism $r\mapsto c^{-1}rc$, which is a permutation ---
this is precisely axiom (Q2) --- whereas its row $r$ takes as values the
conjugacy class of $r$, each element repeated $|C_G(r)|$ times.  Hence $Q$
is column-Latin, and it is Latin only for the trivial group.  Axiom (Q2)
is thus exactly the statement that transversals of $Q$ are a meaningful
notion at all.

\begin{thm}\label{thm:triple}
A bijection $\sigma\colon G\to G$ is a colouring bijection if and only if
the cell set
\[
T_\sigma=\{(x,\sigma(x))\ :\ x\in G\}
\]
is a common transversal of the three arrays $L$, $L^{-}$ and $Q$ of
\eqref{eq:three-tables}.
\end{thm}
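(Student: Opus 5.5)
The plan is to unwind the definition of a transversal of an arbitrary array on $G\times G$ and observe that, for a cell set of the special form $T_\sigma$, the row and column conditions are automatic and the symbol condition is exactly bijectivity of the corresponding diagonal map. Recall that a set of $|G|$ cells is a transversal of an array $A$ if it meets each row once, each column once, and its $|G|$ cells carry pairwise distinct symbols of $A$.

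First I treat the row and column conditions, which do not depend on the array. Since $\sigma$ is a bijection, $T_\sigma=\{(x,\sigma(x))\}$ has exactly $|G|$ cells. It meets row $x$ only in $(x,\sigma(x))$, and it meets column $c$ only in $(\sigma^{-1}(c),c)$. Hence $T_\sigma$ is a common transversal of $L$, $L^{-}$ and $Q$ if and only if, for each of the three arrays, the symbols on $T_\sigma$ are pairwise distinct.

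Next I read off the symbols using \eqref{eq:three-diagonals} and \eqref{eq:three-tables}:
\[
L(x,\sigma(x))=x\sigma(x)=\Dp_\sigma(x),
\qquad
L^{-}(x,\sigma(x))=x^{-1}\sigma(x)=\Dm_\sigma(x),
\qquad
Q(x,\sigma(x))=\sigma(x)^{-1}x\sigma(x)=\Dc_\sigma(x).
\]
Distinctness of the symbols on $T_\sigma$ in each array is therefore injectivity of the corresponding diagonal map. For a self-map of the finite set $G$, injectivity is the same as bijectivity. So $T_\sigma$ is a common transversal exactly when $\Dp_\sigma$, $\Dm_\sigma$ and $\Dc_\sigma$ are bijections. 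Together with the standing hypothesis that $\sigma$ is a bijection, this is Definition~\ref{def:scm_bc} of $\sigma\in\CB(G)$.

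No step is a real obstacle. The one point needing care is that $Q$ is not Latin, so one might worry that a transversal of $Q$ is ill-defined or needs extra hypotheses. The definition of transversal used above makes sense for any array, because it only asks about rows, columns and distinctness of symbols. Column-Latinity of $Q$ (axiom (Q2)) is not needed for the equivalence itself. It only explains why such transversals are a natural notion, so I would mention it as a remark rather than use it.
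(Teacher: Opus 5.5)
Your proposal is correct and follows the paper's own proof essentially verbatim: the row and column conditions are automatic because $\sigma$ is a bijection, and the symbols of $T_\sigma$ in $L$, $L^{-}$, $Q$ are exactly $\Dp_\sigma(x)$, $\Dm_\sigma(x)$, $\Dc_\sigma(x)$. Distinctness of those symbols is therefore bijectivity of the three maps. Your side remark is also accurate: column-Latinity of $Q$ is not used in the equivalence itself.
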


\begin{proof}
The set $T_\sigma$ meets every row exactly once because $\sigma$ is a map,
and every column exactly once because $\sigma$ is a bijection.  By
\eqref{eq:three-diagonals} its symbol sequences in the three arrays are
\[
L(x,\sigma(x))=\Dp_\sigma(x),
\qquad
L^{-}(x,\sigma(x))=\Dm_\sigma(x),
\qquad
Q(x,\sigma(x))=\Dc_\sigma(x).
\]
These symbols are pairwise distinct in all three arrays precisely when
$\Dp_\sigma$, $\Dm_\sigma$ and $\Dc_\sigma$ are bijections.
\end{proof}

\medskip

A single colouring bijection does more than produce one transversal.

\begin{prop}\label{prop:mate}
Let $\sigma\in\CB(G)$ and put $N(x,y)=\sigma(x)^{-1}y$.  Then:
\begin{enumerate}
\item $N$ is a Latin square, orthogonal to $L$ and to $L^{-}$
      simultaneously;
\item the map $(x,y)\mapsto\bigl(Q(x,y),N(x,y)\bigr)$ is a bijection of
      $G\times G$;
\item every symbol class of $N$, that is, every translate
      $T_{\sigma g}=\{(x,\sigma(x)g):x\in G\}$ with $g\in G$, is again a
      common transversal of $(L,L^{-},Q)$.  Hence a single colouring
      bijection resolves $G\times G$ into $|G|$ pairwise disjoint triple
      transversals.
\end{enumerate}
\end{prop}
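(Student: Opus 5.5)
The plan is a direct reduction of all three parts to the fact, recorded in (T2), that two arrays are orthogonal exactly when a certain map of one variable is a bijection. The idea is to fix a symbol pair and solve for the cell.

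\emph{Part (1).} For fixed $x$, the row $y\mapsto\sigma(x)^{-1}y$ is a permutation of $G$. For fixed $y$, the column $x\mapsto\sigma(x)^{-1}y$ is a permutation because $\sigma$ is a bijection. So $N$ is Latin.

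For $L\perp N$, fix $(u,v)$ and solve $xy=u$, $\sigma(x)^{-1}y=v$. The first equation gives $y=x^{-1}u$, and substituting into the second yields
\[
\sigma(x)^{-1}x^{-1}u=v,
\qquad\text{that is}\qquad
x\,\sigma(x)=u\,v^{-1}.
\]
This has a unique solution $x$ because $\Dp$ is a bijection, and then $y$ is determined.

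For $L^{-}\perp N$, the equations $x^{-1}y=u$ and $\sigma(x)^{-1}y=v$ give $y=xu$, and then
\[
\sigma(x)^{-1}x\,u=v,
\qquad\text{that is}\qquad
x^{-1}\sigma(x)=u\,v^{-1}.
\]
This is uniquely solvable because $\Dm$ is a bijection.

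\emph{Part (2).} Fix $(u,v)$ and solve $y^{-1}xy=u$, $\sigma(x)^{-1}y=v$. The second equation gives $y=\sigma(x)v$. Substituting into the first gives
\[
v^{-1}\,\sigma(x)^{-1}x\,\sigma(x)\,v=u,
\qquad\text{that is}\qquad
\Dc(x)=v\,u\,v^{-1}.
\]
This has a unique solution $x$ since $\Dc$ is a bijection, and then $y$ is determined. So the pair map is injective, hence bijective on the finite set $G\times G$. This is the only step that uses the third condition, and the only mildly delicate point is getting the conjugation bookkeeping right; I expect no real obstacle.

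\emph{Part (3).} The symbol class $\{N=g\}$ is exactly $\{(x,\sigma(x)g)\}=T_{\sigma g}$, where $\sigma g$ denotes the map $x\mapsto\sigma(x)g$. It therefore suffices to show that each such class is a common transversal. This follows from (1) and (2): a symbol class of a Latin square meets each row and column once. Orthogonality of $N$ to $L$, $L^{-}$ and the bijectivity in (2) say that on the class $\{N=g\}$ every symbol of $L$, $L^{-}$ and $Q$ occurs exactly once.

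Alternatively, one can check directly that $\sigma g$ is a colouring bijection. Its three maps are $\Dp_{\sigma g}(x)=\Dp(x)g$ and $\Dm_{\sigma g}(x)=\Dm(x)g$, and
\[
\Dc_{\sigma g}(x)=g^{-1}\,\Dc(x)\,g,
\]
all of which are bijections; then apply Theorem~\ref{thm:triple}.

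Disjointness of the classes for distinct $g$ is clear, and the $|G|$ classes cover $G\times G$, which gives the claimed resolution.
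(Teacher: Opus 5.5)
Your proof is correct and follows the paper's route. In (1) and (2) you make essentially the same substitution $y=\sigma(x)v$ and reduce everything to the bijectivity of $\Dp$, $\Dm$ and $\Dc$; you solve for the cell given a symbol pair, whereas the paper checks injectivity of the pair map on two cells with equal $N$-symbol. Your alternative argument for (3) is exactly the paper's, via Remark~\ref{rem:hol-special}(2) and Theorem~\ref{thm:triple}. Your primary argument for (3), which reads the transversal property of each symbol class off (1) and (2), is also valid and has the small advantage of not needing the holomorph action.
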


\begin{proof}
(1) $N$ is obtained from $L$ by the row relabelling
$x\mapsto\sigma(x)^{-1}$, hence is a Latin square.  Suppose
$N(x_1,y_1)=N(x_2,y_2)$. Then $y_2=\sigma(x_2)\sigma(x_1)^{-1}y_1$.
Substituting this into $L(x_1,y_1)=L(x_2,y_2)$ gives
$x_1\sigma(x_1)=x_2\sigma(x_2)$, so $x_1=x_2$ by injectivity of
$\Dp_\sigma$, and then $y_1=y_2$. Thus orthogonality of $N$ and $L$ follows.
The same substitution in $L^{-}(x_1,y_1)=L^{-}(x_2,y_2)$ gives
$x_1^{-1}\sigma(x_1)=x_2^{-1}\sigma(x_2)$, and injectivity of $\Dm_\sigma$
applies.

(2) With the same substitution, $Q(x_1,y_1)=Q(x_2,y_2)$ reduces to
$\sigma(x_1)^{-1}x_1\sigma(x_1)=\sigma(x_2)^{-1}x_2\sigma(x_2)$, that is,
to $\Dc_\sigma(x_1)=\Dc_\sigma(x_2)$, whence $x_1=x_2$ and $y_1=y_2$.  A
count of the $|G|^{2}$ cells finishes the argument.

(3) The translate $T_{\sigma g}$ is $T_{\sigma_g}$ for
$\sigma_g(x)=\sigma(x)g$, and $\sigma_g=\sigma^{(g,\mathrm{id})}$ is the
image of $\sigma$ under the right-translation part of the holomorph
action. By Remark~\ref{rem:hol-special}(2),
\[
\Dp_{\sigma_g}=\Rt{g}\circ\Dp_\sigma,
\qquad
\Dm_{\sigma_g}=\Rt{g}\circ\Dm_\sigma,
\qquad
\Dc_{\sigma_g}=\conjg{g}\circ\Dc_\sigma,
\]
all of which are bijections.  Thus $\sigma_g\in\CB(G)$, and
Theorem~\ref{thm:triple} applies.  The cell sets $T_{\sigma g}$, $g\in G$,
are pairwise disjoint and cover $G\times G$.
\end{proof}

\subsection{Locality of the third condition}

The condition on $\Dc_\sigma$ is concentrated exactly where $G$ fails to be
commutative.

\begin{prop}\label{prop:class-local}
For every bijection $\sigma$ and every $x\in G$, the element
$\Dc_\sigma(x)=x\triangleright\sigma(x)$ is conjugate to $x$.
Consequently $\Dc_\sigma$ is a bijection if and only if for every
conjugacy class $K$ of $G$ the restriction $\Dc_\sigma|_K\colon K\to K$ is
a bijection.  Equivalently, for every class $K$ and every $k\in K$ there is
exactly one $r\in K$ with $\sigma(r)\in T_{r\to k}$, where
\[
T_{r\to k}=\{c\in G:\ c^{-1}rc=k\}
\]
is empty or a right coset of the centraliser $C_G(r)$.
\end{prop}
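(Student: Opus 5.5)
The plan is to verify the first assertion directly and derive the rest by a counting argument on conjugacy classes. By definition, $\Dc_\sigma(x)=\sigma(x)^{-1}x\,\sigma(x)$ is the image of $x$ under the inner automorphism $\rho_{\sigma(x)^{-1}}$. It is therefore conjugate to $x$, whatever $\sigma$ is; no property of $\sigma$ beyond being a map is needed. Hence for every conjugacy class $K$ we have $\Dc_\sigma(K)\subseteq K$, so $\Dc_\sigma$ restricts to a self-map $\Dc_\sigma|_K\colon K\to K$.

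For the first equivalence, I will use that $G$ is the disjoint union of its conjugacy classes and that $\Dc_\sigma$ preserves each class. Injectivity of $\Dc_\sigma$ then holds exactly when each restriction is injective: two elements with equal images lie in the same class, since their common image is conjugate to both. Because $K$ is finite, injectivity of $\Dc_\sigma|_K$ is equivalent to bijectivity. Conversely, if every restriction is a bijection, then $\Dc_\sigma$ is the disjoint union of these bijections and is therefore a bijection of $G$.

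For the reformulation, fix $K$ and $k\in K$. Bijectivity of $\Dc_\sigma|_K$ means that for each $k\in K$ there is exactly one $r\in K$ with $\sigma(r)^{-1}r\sigma(r)=k$, that is, with $\sigma(r)\in T_{r\to k}$; this is just the definition of $T_{r\to k}$ with $c=\sigma(r)$. Surjectivity plus injectivity on a finite set are both captured by ``exactly one'', although ``at least one for each $k$'' would already suffice by counting.

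It remains to describe $T_{r\to k}$. If $c_0\in T_{r\to k}$, then $c^{-1}rc=k=c_0^{-1}rc_0$ holds if and only if $(cc_0^{-1})^{-1}r(cc_0^{-1})=r$. This says $cc_0^{-1}\in C_G(r)$, i.e.\ $c\in C_G(r)c_0$, a right coset of the centraliser. Otherwise $T_{r\to k}$ is empty; since $k$ and $r$ lie in the same class $K$, it is in fact nonempty, but the statement only needs the dichotomy. There is no real obstacle here; the only point needing care is to orient the coset correctly, as a right coset $C_G(r)c_0$, consistent with the convention $c^{-1}rc$.
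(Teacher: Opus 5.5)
Your proposal is correct and takes the same route as the paper. Class-preservation of $\Dc_\sigma$ follows from its definition as a conjugate of $x$, and bijectivity then reduces to bijectivity on each finite conjugacy class. The condition $\sigma(r)\in T_{r\to k}$ is identified with $\Dc_\sigma(r)=k$, and the transporter is empty or a right coset $C_G(r)c_0$. You simply write out the details the paper calls immediate: the injectivity-across-classes argument and the explicit coset computation.
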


\begin{proof}
The first assertion is immediate from
$\Dc_\sigma(x)=\sigma(x)^{-1}x\,\sigma(x)$.  A class-preserving map of a
finite group is therefore a bijection of $G$ if and only if it is a
bijection on each class.  Finally, $\sigma(r)\in T_{r\to k}$ says exactly
that $\Dc_\sigma(r)=k$, and the transporter $T_{r\to k}$ is empty or a
right coset of $C_G(r)$.
\end{proof}

\section{The holomorph and its action}\label{sec:hol}

Colouring bijections are never isolated: they come in orbits under a
large group of symmetries.  This section makes that precise.  The action
of $\Aut(G)$ by conjugation is the special case $g=e$ below.

For $g\in G$ let
\[
\Rt{g}\colon G\to G,\quad \Rt{g}(u)=ug,
\qquad\text{and}\qquad
\conjg{g}\colon G\to G,\quad \conjg{g}(u)=g^{-1}ug,
\]
denote right translation by $g$ and conjugation by $g$. Both are
bijections of $G$, and $\conjg{g}=\rho_{g^{-1}}$ is an automorphism.

\begin{defn}\label{def:hol}
The \emph{holomorph} of $G$ is the semidirect product
\[
\Hol(G)=G\rtimes\Aut(G),
\qquad
(g,f)\cdot(h,\varphi)=\bigl(g\cdot f(h),\ f\circ\varphi\bigr).
\]
For $\sigma\in\Sym(G)$ and $(g,f)\in\Hol(G)$ put
\begin{equation}\label{eq:hol-action}
\sigma^{(g,f)}
:=f^{-1}\circ \Rt{g}\circ\sigma\circ f,
\qquad\text{i.e.}\qquad
\sigma^{(g,f)}(x)=f^{-1}\bigl(\sigma(f(x))\cdot g\bigr).
\end{equation}
Two special cases are:
\begin{itemize}
\item \emph{automorphism conjugation} ($g=e$):
      $\sigma^{(e,f)}=f^{-1}\circ\sigma\circ f$;
\item \emph{right translation} ($f=\mathrm{id}$):
      $\sigma^{(g,\mathrm{id})}(x)=\sigma(x)\cdot g$.
\end{itemize}
\end{defn}

\begin{lem}\label{lem:hol-is-action}
Formula \eqref{eq:hol-action} defines a right action of $\Hol(G)$ on
$\Sym(G)$.
\end{lem}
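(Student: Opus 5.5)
The statement asks us to verify two things for the formula \eqref{eq:hol-action}: that it sends $\mathrm{Sym}(G)$ into itself, and that it satisfies the axioms of a right action, namely $\sigma^{(e,\mathrm{id})}=\sigma$ and $\bigl(\sigma^{(g,f)}\bigr)^{(h,\varphi)}=\sigma^{(g,f)\cdot(h,\varphi)}$. The first point is immediate. The map $\sigma^{(g,f)}=f^{-1}\circ\Rt{g}\circ\sigma\circ f$ is a composite of bijections of $G$, so it lies in $\Sym(G)$. The identity axiom is also clear, since $\Rt{e}=\mathrm{id}$ and $f=\mathrm{id}$.

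For the compatibility axiom I will work with the composite form rather than with elements. The one identity needed is the intertwining relation
\[
\varphi^{-1}\circ\Rt{h'}=\Rt{\varphi^{-1}(h')}\circ\varphi^{-1},
\]
which holds because $\varphi^{-1}$ is a homomorphism: $\varphi^{-1}(uh')=\varphi^{-1}(u)\varphi^{-1}(h')$. Equivalently, $\Rt{g}\circ\varphi=\varphi\circ\Rt{\varphi^{-1}(g)}$.

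With this in hand I compute
\[
\bigl(\sigma^{(g,f)}\bigr)^{(h,\varphi)}
=\varphi^{-1}\circ\Rt{h}\circ f^{-1}\circ\Rt{g}\circ\sigma\circ f\circ\varphi .
\]
The aim is to bring this to the form $(f\varphi)^{-1}\circ\Rt{k}\circ\sigma\circ(f\varphi)$ for a single element $k$. First I move $\Rt{h}$ past $f^{-1}$ using $\Rt{h}\circ f^{-1}=f^{-1}\circ\Rt{f(h)}$. Then I combine the translations with $\Rt{f(h)}\circ\Rt{g}=\Rt{g\,f(h)}$, recalling that $\Rt{a}\circ\Rt{b}(u)=uba$. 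This gives
\[
\varphi^{-1}\circ f^{-1}\circ\Rt{g f(h)}\circ\sigma\circ f\circ\varphi=\sigma^{(g f(h),\,f\circ\varphi)} .
\]
By the product rule of Definition~\ref{def:hol}, $(g\,f(h),f\circ\varphi)$ is exactly $(g,f)\cdot(h,\varphi)$, so the axiom holds.

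The main point of care is the order of multiplication in the combined translation. It must come out as $g\,f(h)$ and not $f(h)\,g$, since the latter would fail to match the holomorph product. I will check this at the element level: starting from $\sigma^{(g,f)}(y)=f^{-1}(\sigma(f(y))g)$, applying the $(h,\varphi)$-action gives
\[
x\mapsto\varphi^{-1}\bigl(f^{-1}(\sigma(f\varphi(x))g)\,h\bigr)=(f\varphi)^{-1}\bigl(\sigma(f\varphi(x))\,g\,f(h)\bigr),
\]
where the last step uses that $f$ is a homomorphism. This confirms the order $g\,f(h)$, and the proof is complete.
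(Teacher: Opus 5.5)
Your proof is correct and takes essentially the same route as the paper. Both arguments work with composites: they use the intertwining identity $f\circ\Rt{h}\circ f^{-1}=\Rt{f(h)}$ (your form is $\Rt{h}\circ f^{-1}=f^{-1}\circ\Rt{f(h)}$) and the rule $\Rt{f(h)}\circ\Rt{g}=\Rt{g\,f(h)}$ to arrive at $\sigma^{(g\,f(h),\,f\circ\varphi)}=\sigma^{(g,f)\cdot(h,\varphi)}$, while your element-level check of the order $g\,f(h)$ and your remark that $\sigma^{(g,f)}\in\Sym(G)$ simply make explicit what the paper leaves implicit.
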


\begin{proof}
Clearly $\sigma^{(e,\mathrm{id})}=\sigma$.  Since $f$ is an automorphism,
$f\circ \Rt{h}\circ f^{-1}=\Rt{f(h)}$, and right translations satisfy
$\Rt{g}\circ \Rt{h}=\Rt{hg}$.  Hence, for $(g,f),(h,\varphi)\in\Hol(G)$,
\[
\begin{aligned}
\bigl(\sigma^{(g,f)}\bigr)^{(h,\varphi)}
&=\varphi^{-1}\Rt{h}\bigl(f^{-1}\Rt{g}\,\sigma f\bigr)\varphi
 =(f\varphi)^{-1}\bigl(f\,\Rt{h}\,f^{-1}\bigr)\Rt{g}\,\sigma\,(f\varphi)\\
&=(f\varphi)^{-1}\,\Rt{f(h)}\Rt{g}\,\sigma\,(f\varphi)
 =(f\varphi)^{-1}\,\Rt{g\cdot f(h)}\,\sigma\,(f\varphi),
\end{aligned}
\]
which is exactly $\sigma^{(g,f)\cdot(h,\varphi)}$.
\end{proof}

\begin{lem}\label{lem:hol_scm}
If $\sigma\in\SCM(G)$ and $(g,f)\in\Hol(G)$, then
$\sigma^{(g,f)}\in\SCM(G)$.
\end{lem}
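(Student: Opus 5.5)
The plan is to compute the three diagonal maps of $\tau:=\sigma^{(g,f)}$ directly and express each as a composite of a known bijection with $\Dp_\sigma$ or $\Dm_\sigma$. Since $f$ is an automorphism, $f^{-1}$ respects products. Write $\tau(x)=f^{-1}\bigl(\sigma(f(x))\,g\bigr)$ and put $y=f(x)$. Then
\[
\Dp_\tau(x)=x\,\tau(x)=f^{-1}(y)\,f^{-1}\bigl(\sigma(y)g\bigr)=f^{-1}\bigl(y\,\sigma(y)\,g\bigr),
\]
which gives
\[
\Dp_\tau=f^{-1}\circ\Rt{g}\circ\Dp_\sigma\circ f .
\]
The same manipulation applied to $x^{-1}\tau(x)$ yields
\[
\Dm_\tau=f^{-1}\circ\Rt{g}\circ\Dm_\sigma\circ f .
\]

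We also need $\tau$ itself to be a bijection. This is immediate, since $\tau=f^{-1}\circ\Rt{g}\circ\sigma\circ f$ is a composite of bijections.

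Each of $\Dp_\tau$ and $\Dm_\tau$ is likewise a composite of four bijections: $f$, $\Dp_\sigma$ or $\Dm_\sigma$, $\Rt{g}$, and $f^{-1}$. Hence both are bijections, and $\tau\in\SCM(G)$.

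There is no real obstacle here; the only point needing care is the order of the factors. Right translation by $g$ must sit on the right of the product $y\sigma(y)$, and it does, because $g$ enters $\sigma(y)g$ on the right. It is also convenient to record the analogous identity
\[
\Dc_\tau=f^{-1}\circ\conjg{g}\circ\Dc_\sigma\circ f ,
\]
which follows from $(\sigma(y)g)^{-1}\,y\,\sigma(y)g=g^{-1}\Dc_\sigma(y)g$. This is the form quoted later in Remark~\ref{rem:hol-special}, and it gives the same closure statement for $\CB(G)$.
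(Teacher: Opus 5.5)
Your proposal is correct and follows the paper's own proof: you substitute $y=f(x)$ and use that $f^{-1}$ is a homomorphism to obtain $\Dp_\tau=f^{-1}\circ\Rt{g}\circ\Dp_\sigma\circ f$ and $\Dm_\tau=f^{-1}\circ\Rt{g}\circ\Dm_\sigma\circ f$, which are composites of bijections. Your additional identity $\Dc_\tau=f^{-1}\circ\conjg{g}\circ\Dc_\sigma\circ f$ is also correct; it is exactly what the paper proves next in Lemma~\ref{lem:hol_bc}.
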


\begin{proof}
Put $\tau=\sigma^{(g,f)}$. As a composition of bijections, $\tau$ is a
bijection.  Substitute $y=f(x)$ and use that $f^{-1}$ is a homomorphism:
\[
\Dp_\tau(x)=x\cdot\tau(x)
=f^{-1}(y)\cdot f^{-1}\bigl(\sigma(y)g\bigr)
=f^{-1}\bigl(y\,\sigma(y)\,g\bigr)
=f^{-1}\Bigl(\Rt{g}\bigl(\Dp_\sigma(y)\bigr)\Bigr),
\]
\[
\Dm_\tau(x)=x^{-1}\cdot\tau(x)
=f^{-1}(y^{-1})\cdot f^{-1}\bigl(\sigma(y)g\bigr)
=f^{-1}\bigl(y^{-1}\sigma(y)\,g\bigr)
=f^{-1}\Bigl(\Rt{g}\bigl(\Dm_\sigma(y)\bigr)\Bigr).
\]
Thus
$\Dp_\tau=f^{-1}\circ \Rt{g}\circ\Dp_\sigma\circ f$ and
$\Dm_\tau=f^{-1}\circ \Rt{g}\circ\Dm_\sigma\circ f$
are compositions of bijections.
\end{proof}

\begin{lem}[Holomorph action on colouring bijections]\label{lem:hol_bc}
If $\sigma\in\CB(G)$ and $(g,f)\in\Hol(G)$, then
$\sigma^{(g,f)}\in\CB(G)$.  In particular $\Hol(G)$ acts on $\CB(G)$.
\end{lem}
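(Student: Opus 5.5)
By Lemma~\ref{lem:hol_scm}, $\tau=\sigma^{(g,f)}$ is already a strong complete mapping, so it remains to show that $\Dc_\tau$ is a bijection. I will compute $\Dc_\tau$ in the same way as in Lemma~\ref{lem:hol_scm}, substituting $y=f(x)$ and using that $f^{-1}$ is a homomorphism:
\[
\Dc_\tau(x)=\tau(x)^{-1}\,x\,\tau(x)
=f^{-1}\bigl(\sigma(y)g\bigr)^{-1}\,f^{-1}(y)\,f^{-1}\bigl(\sigma(y)g\bigr)
=f^{-1}\bigl(g^{-1}\sigma(y)^{-1}\,y\,\sigma(y)\,g\bigr).
\]
The expression inside $f^{-1}$ is $\conjg{g}\bigl(\Dc_\sigma(y)\bigr)$. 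Hence
\[
\Dc_\tau=f^{-1}\circ\conjg{g}\circ\Dc_\sigma\circ f .
\]
Each of the four factors is a bijection: $f$ and $f^{-1}$ are automorphisms, $\conjg{g}$ is an inner automorphism, and $\Dc_\sigma$ is a bijection by hypothesis. So their composite is a bijection, and $\tau\in\CB(G)$.

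Finally, Lemma~\ref{lem:hol-is-action} says that \eqref{eq:hol-action} is a right action of $\Hol(G)$ on $\Sym(G)$. We have just shown that this action maps the subset $\CB(G)$ into itself. Therefore it restricts to an action of $\Hol(G)$ on $\CB(G)$.

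No step is a real obstacle. The only point needing care is the order of factors in the conjugate. The translation $g$ multiplies $\sigma(y)$ on the right, so $g$ ends up conjugating $\Dc_\sigma(y)$ as $g^{-1}(\cdot)g$, which is $\conjg{g}$ and not $\rho_g$. This also gives the formulas of Remark~\ref{rem:hol-special}(2) in the special case $f=\mathrm{id}$.
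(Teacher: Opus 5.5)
Your proof is correct and matches the paper's argument: you reduce to $\Dc_\tau$ via Lemma~\ref{lem:hol_scm}, substitute $y=f(x)$, and obtain $\Dc_\tau=f^{-1}\circ\conjg{g}\circ\Dc_\sigma\circ f$, a composite of bijections. Your closing observation that the right action of Lemma~\ref{lem:hol-is-action} restricts to $\CB(G)$ spells out the ``in particular'' clause, which the paper leaves implicit.
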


\begin{proof}
By Lemma~\ref{lem:hol_scm}, $\tau=\sigma^{(g,f)}\in\SCM(G)$, so only
$\Dc_\tau$ has to be checked.  With $y=f(x)$,
\begin{align*}
\Dc_\tau(x)
&=\tau(x)^{-1}\cdot x\cdot\tau(x)
=f^{-1}\bigl(g^{-1}\sigma(y)^{-1}\bigr)\cdot f^{-1}(y)\cdot
 f^{-1}\bigl(\sigma(y)g\bigr)\\
&=f^{-1}\bigl(g^{-1}\cdot\sigma(y)^{-1}y\,\sigma(y)\cdot g\bigr)
=f^{-1}\Bigl(\conjg{g}\bigl(\Dc_\sigma(y)\bigr)\Bigr).
\end{align*}
Hence $\Dc_\tau=f^{-1}\circ\conjg{g}\circ\Dc_\sigma\circ f$ is a
composition of bijections.
\end{proof}

\begin{cor}\label{cor:orbits}
Both $\SCM(G)$ and $\CB(G)$ are unions of $\Hol(G)$-orbits, and for every
$\sigma\in\CB(G)$
\[
|\Orb(\sigma)|=\frac{|\Hol(G)|}{|\Stab_{\Hol(G)}(\sigma)|},
\qquad
|\Hol(G)|=|G|\cdot|\Aut(G)| .
\]
In particular, a single colouring bijection yields an orbit of
$|\Hol(G)|/|\Stab(\sigma)|$ colouring bijections, hence
$|\Hol(G)|/|\Stab(\sigma)|-1$ further ones, with no additional
verification.
\end{cor}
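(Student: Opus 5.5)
The corollary follows from Lemma~\ref{lem:hol_scm}, Lemma~\ref{lem:hol_bc} and the orbit--stabiliser theorem, so the plan is short.

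First, Lemma~\ref{lem:hol-is-action} shows that \eqref{eq:hol-action} is a right action of $\Hol(G)$ on $\Sym(G)$. Lemmas~\ref{lem:hol_scm} and~\ref{lem:hol_bc} show that $\SCM(G)$ and $\CB(G)$ are each carried into themselves by every element of $\Hol(G)$. A subset of a $G$-set that is invariant under the action is a union of orbits: if $\sigma$ lies in the subset, then so does every $\sigma^{(g,f)}$, so the whole orbit $\Orb(\sigma)$ does. This gives the first assertion for both sets.

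Second, fix $\sigma\in\CB(G)$. The orbit--stabiliser theorem for the finite group $\Hol(G)$ acting on the finite set $\Sym(G)$ gives a bijection between the right cosets of $\Stab_{\Hol(G)}(\sigma)$ and $\Orb(\sigma)$, namely $\Stab(\sigma)\,(g,f)\mapsto\sigma^{(g,f)}$. It is well defined and injective because $\sigma^{h}=\sigma^{h'}$ holds exactly when $h'h^{-1}\in\Stab(\sigma)$. This uses the right-action law of Lemma~\ref{lem:hol-is-action}. Hence $|\Orb(\sigma)|=|\Hol(G)|/|\Stab(\sigma)|$. Since $\Hol(G)=G\rtimes\Aut(G)$ is the set $G\times\Aut(G)$ with a twisted product, $|\Hol(G)|=|G|\cdot|\Aut(G)|$.

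Finally, the orbit lies inside $\CB(G)$ by the first step and contains $\sigma$ itself. It therefore supplies $|\Hol(G)|/|\Stab(\sigma)|-1$ further colouring bijections. The only input needed for these is Lemma~\ref{lem:hol_bc}, so no separate verification of $\Dp$, $\Dm$, $\Dc$ is required.

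There is no real obstacle here. The one point needing care is that the action is a right action, so cosets must be taken on the correct side. This affects only the bookkeeping, not the cardinality.
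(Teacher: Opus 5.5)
Your proposal is correct and takes the route the paper intends: the paper gives the corollary no separate proof and treats it as immediate from Lemmas~\ref{lem:hol-is-action}, \ref{lem:hol_scm} and~\ref{lem:hol_bc} together with the orbit--stabiliser theorem, exactly as you do. Your bookkeeping for the right action is also right, since the map $\Stab(\sigma)\,h\mapsto\sigma^{h}$ on right cosets is well defined and injective.
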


\begin{rem}\label{rem:hol-special}
For the two special cases of Definition~\ref{def:hol} the formulas of
Lemmas~\ref{lem:hol_scm} and~\ref{lem:hol_bc} read:
\begin{enumerate}
\item $g=e$:\quad
      $\Delta^{\,\bullet}_\tau=f^{-1}\circ\Delta^{\,\bullet}_\sigma\circ f$
      for $\bullet\in\{+,-,c\}$;
\item $f=\mathrm{id}$:\quad
      $\Dp_\tau=\Rt{g}\circ\Dp_\sigma$, \
      $\Dm_\tau=\Rt{g}\circ\Dm_\sigma$, \
      $\Dc_\tau=\conjg{g}\circ\Dc_\sigma$.
\end{enumerate}
Case (1) is the conjugation action of $\Aut(G)$ on $\CB(G)$.
\end{rem}

\medskip

The computations above say more than that $\Hol(G)$ permutes $\CB(G)$: the holomorph acts
on the whole translation graph of Definition~\ref{def:perm-graph}.

\begin{lem}[Holomorph action on $\Perm(G)$]\label{lem:hol-orth}
Let $G$ be a finite group and let $(g,f)\in\Hol(G)$.  Then for all bijections
$\theta,\varphi$ of $G$,
\[
\theta\sim\varphi
\qquad\Longleftrightarrow\qquad
\theta^{(g,f)}\sim\varphi^{(g,f)} .
\]
Hence the holomorph acts on $\Perm(G)$ by graph automorphisms.  Since
\eqref{eq:hol-action} is a right action, the assignment
$\alpha\colon(g,f)\mapsto\bigl(\sigma\mapsto\sigma^{(g,f)}\bigr)$ satisfies
$\alpha_{uv}=\alpha_{v}\circ\alpha_{u}$. Composing it with inversion in $\Hol(G)$
turns it into a homomorphism
\[
\Hol(G)\longrightarrow\Aut\bigl(\Perm(G)\bigr),
\qquad
(g,f)\longmapsto\alpha_{(g,f)^{-1}} .
\]
\end{lem}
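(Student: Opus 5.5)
The plan is to compute the witness of the transformed pair directly, exactly as in the proofs of Lemmas~\ref{lem:hol_scm} and~\ref{lem:hol_bc}. Put $\theta'=\theta^{(g,f)}$ and $\varphi'=\varphi^{(g,f)}$, and substitute $y=f(x)$. Since $f^{-1}$ is a homomorphism,
\[
w(\theta',\varphi')(x)=\theta'(x)^{-1}\varphi'(x)
=f^{-1}\bigl(g^{-1}\theta(y)^{-1}\bigr)\,f^{-1}\bigl(\varphi(y)g\bigr)
=f^{-1}\bigl(g^{-1}\,\theta(y)^{-1}\varphi(y)\,g\bigr).
\]
This gives the identity
\[
w(\theta',\varphi')=f^{-1}\circ\conjg{g}\circ w(\theta,\varphi)\circ f .
\]
Because $f$, $f^{-1}$ and $\conjg{g}$ are bijections, $w(\theta',\varphi')$ is a bijection exactly when $w(\theta,\varphi)$ is one. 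By Definition~\ref{def:perm-graph} this is the claimed equivalence $\theta\sim\varphi\iff\theta'\sim\varphi'$.

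Next, each $\alpha_{(g,f)}$ is a bijection of the vertex set $\Sym(G)$: it is composition on both sides by fixed bijections, and by Lemma~\ref{lem:hol-is-action} its inverse is $\alpha_{(g,f)^{-1}}$. Together with the equivalence above, this makes $\alpha_{(g,f)}$ a graph automorphism of $\Perm(G)$.

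For the homomorphism statement we use that \eqref{eq:hol-action} is a right action (Lemma~\ref{lem:hol-is-action}). Written in terms of $\alpha$, the identity $\sigma^{uv}=(\sigma^{u})^{v}$ reads $\alpha_{uv}=\alpha_v\circ\alpha_u$. Setting $\beta_u=\alpha_{u^{-1}}$, we get
\[
\beta_{uv}=\alpha_{v^{-1}u^{-1}}=\alpha_{u^{-1}}\circ\alpha_{v^{-1}}=\beta_u\circ\beta_v .
\]
So $\beta$ is a homomorphism into $\Aut(\Perm(G))$.

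There is no real obstacle. The only point needing care is the order of factors in the witness computation: the translation $g$ ends up as a conjugation rather than a translation, because it appears inverted on the left and uninverted on the right.
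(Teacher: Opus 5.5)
Your proposal is correct and is essentially the paper's proof. Both rest on the witness identity $w\bigl(\theta^{(g,f)},\varphi^{(g,f)}\bigr)=f^{-1}\circ\conjg{g}\circ w(\theta,\varphi)\circ f$, with Lemma~\ref{lem:hol-is-action} showing that each $(g,f)$ permutes the vertex set, and your check that $\beta_u=\alpha_{u^{-1}}$ is a homomorphism spells out a step the paper only asserts.
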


\begin{proof}
Put $\tau=\theta^{(g,f)}$, $\tau'=\varphi^{(g,f)}$ and substitute $y=f(x)$.  Both are
bijections, and since $f^{-1}$ is a homomorphism,
\[
\tau(x)^{-1}\tau'(x)
=f^{-1}\bigl(g^{-1}\theta(y)^{-1}\bigr)\cdot f^{-1}\bigl(\varphi(y)g\bigr)
=f^{-1}\Bigl(\conjg{g}\bigl(\theta(y)^{-1}\varphi(y)\bigr)\Bigr),
\]
that is,
\[
\bigl(r\mapsto\tau(r)^{-1}\tau'(r)\bigr)
=f^{-1}\circ\conjg{g}\circ
 \bigl(r\mapsto\theta(r)^{-1}\varphi(r)\bigr)\circ f .
\]
As $f^{-1}$ and $\conjg{g}$ are bijections of $G$, one side is a bijection if and only if
the other is.  By Lemma~\ref{lem:hol-is-action} each $(g,f)$ acts as a permutation of
$\Sym(G)$, which is the vertex set of $\Perm(G)$, and the displayed equivalence says that
this permutation preserves edges and non-edges.
\end{proof}

The action does not fix the Cayley table, but it moves it only within a class of
indistinguishable vertices.

\begin{lem}\label{lem:hol-anchor}
Let $G$ be a finite group.
\begin{enumerate}
\item $\mathrm{id}^{(g,f)}=\Rt{h}$ with $h=f^{-1}(g)$, so the $\Hol(G)$-orbit of
$\mathrm{id}$ is the set $\{\Rt{h}: h\in G\}$ of right translations, and the stabiliser
of $\mathrm{id}$ is $\Aut(G)\le\Hol(G)$.
\item The right translations are pairwise nonadjacent and have a common neighbourhood:
$N(\Rt{h})=N(\mathrm{id})=\Orth(G)$ for every $h\in G$.
\item Consequently $\Hol(G)$ preserves $\Orth(G)$ setwise, the induced action being the
restriction of \eqref{eq:hol-action}.
\end{enumerate}
\end{lem}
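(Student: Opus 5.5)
The plan is to verify the three parts of Lemma~\ref{lem:hol-anchor} in order, each by a direct computation from \eqref{eq:hol-action} and Lemma~\ref{lem:perm-cayley}.

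\emph{Part (1).} Substitute $\sigma=\mathrm{id}$ into \eqref{eq:hol-action}. Since $f^{-1}$ is a homomorphism,
\[
\mathrm{id}^{(g,f)}(x)=f^{-1}\bigl(f(x)g\bigr)=x\,f^{-1}(g).
\]
Hence $\mathrm{id}^{(g,f)}=\Rt{h}$ with $h=f^{-1}(g)$. As $(g,f)$ runs over $\Hol(G)$, the element $h$ runs over all of $G$, so the orbit of $\mathrm{id}$ is $\{\Rt{h}:h\in G\}$. The equality $\Rt{h}=\mathrm{id}$ holds exactly when $h=e$, that is, when $g=e$. So the stabiliser consists of the pairs $(e,f)$, which is the subgroup $\Aut(G)$.

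\emph{Part (2).} By Lemma~\ref{lem:perm-cayley}(1), $\Rt{h}\sim\Rt{k}$ holds if and only if $\Rt{k}\circ\Rt{h}^{-1}=\Rt{h^{-1}k}$ is an orthomorphism. For a right translation $\Rt{u}$ the map $r\mapsto r^{-1}\Rt{u}(r)=u$ is constant. Since $|G|>1$, this map is not a bijection, and so no two right translations are adjacent. For the neighbourhoods, I compute the witness directly:
\[
w(\Rt{h},\varphi)(r)=h^{-1}r^{-1}\varphi(r)=h^{-1}\,w(\mathrm{id},\varphi)(r).
\]
This is a left translate of $w(\mathrm{id},\varphi)$, so one is a bijection exactly when the other is. Therefore $N(\Rt{h})=N(\mathrm{id})=\Orth(G)$.

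\emph{Part (3).} By Lemma~\ref{lem:hol-orth}, each element of $\Hol(G)$ acts as a graph automorphism of $\Perm(G)$. Such an automorphism maps $N(\mathrm{id})$ onto $N(\mathrm{id}^{(g,f)})=N(\Rt{h})$, and by (2) this set equals $\Orth(G)$. So $\Orth(G)$ is preserved setwise, and the induced action is the restriction of \eqref{eq:hol-action}.

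I do not expect a serious obstacle. The only care needed is with the order of composition in the stabiliser and orbit computation of (1), and with checking that $h=f^{-1}(g)$ really covers all of $G$.
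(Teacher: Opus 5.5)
Your proposal is correct and follows essentially the same route as the paper: the same computation $\mathrm{id}^{(g,f)}(x)=x\,f^{-1}(g)$ for (1), and the same automorphism argument via Lemma~\ref{lem:hol-orth} for (3). In (2) you use the same translation-of-witnesses idea with only cosmetic differences: you translate $w(\mathrm{id},\varphi)$ on the left, whereas the paper translates $\theta(r)^{-1}r$ on the right. You also route nonadjacency through Lemma~\ref{lem:perm-cayley}(1) instead of computing the constant witness of $\Rt{k}$ and $\Rt{h}$ directly.
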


\begin{proof}
(1) $\mathrm{id}^{(g,f)}(x)=f^{-1}(f(x)g)=x\,f^{-1}(g)$, which is $\mathrm{id}$ if and
only if $g=e$. As $g$ runs over $G$ so does $f^{-1}(g)$.

(2) For $\theta$ a bijection, $r\mapsto\theta(r)^{-1}\Rt{h}(r)=\theta(r)^{-1}rh$ is a
bijection if and only if $r\mapsto\theta(r)^{-1}r$ is one, since right translation by $h$
is a bijection. The latter says $\theta\sim\mathrm{id}$.  Taking $\theta=\Rt{k}$ gives
$r\mapsto (rk)^{-1}(rh)=k^{-1}h$, a constant map, so distinct right translations are nonadjacent
(for $|G|>1$).

(3) Immediate from (1), (2) and Lemma~\ref{lem:hol-orth}: an automorphism of $\Perm(G)$
carrying $\mathrm{id}$ to $\Rt{h}$ carries $N(\mathrm{id})$ onto $N(\Rt{h})=N(\mathrm{id})$.
Alternatively, and independently of (1) and (2): a bijection $\theta$ is an orthomorphism
precisely when $\Dm_\theta$ is a bijection, and the second computation in the proof of
Lemma~\ref{lem:hol_scm}, which uses nothing beyond the bijectivity of $\theta$, gives
$\Dm_{\theta^{(g,f)}}=f^{-1}\circ \Rt{g}\circ\Dm_\theta\circ f$.
\end{proof}

\begin{prop}\label{prop:hol-orth-faithful}
For every finite group $G$ the action of Lemma~\ref{lem:hol-orth} is faithful.
Consequently
\[
\Hol(G)\hookrightarrow\Aut\bigl(\Perm(G)\bigr),
\qquad\text{so}\qquad
\bigl|\Aut\bigl(\Perm(G)\bigr)\bigr|\ \ge\ |G|\cdot|\Aut(G)| .
\]
\end{prop}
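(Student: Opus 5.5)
The plan is to compute the kernel of the action directly and show it is trivial. Since $(g,f)\mapsto\alpha_{(g,f)^{-1}}$ differs from $\alpha$ only by inversion in $\Hol(G)$, its kernel is exactly the set of $(g,f)$ with $\sigma^{(g,f)}=\sigma$ for every $\sigma\in\Sym(G)$. So suppose $(g,f)$ fixes every vertex of $\Perm(G)$; we want $g=e$ and $f=\mathrm{id}$.

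\emph{Step 1: $g=e$.} Test on the vertex $\sigma=\mathrm{id}$. By Lemma~\ref{lem:hol-anchor}(1), $\mathrm{id}^{(g,f)}=\Rt{h}$ with $h=f^{-1}(g)$. Hence $\Rt{h}=\mathrm{id}$, which forces $h=e$, and then $g=f(h)=e$.

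\emph{Step 2: $f=\mathrm{id}$.} With $g=e$ the action is automorphism conjugation, and the hypothesis says $f^{-1}\circ\sigma\circ f=\sigma$ for all $\sigma\in\Sym(G)$. Thus $f$ lies in the centre of $\Sym(G)$, and it suffices to evaluate on transpositions.
\begin{itemize}
\item If $|G|\ge3$, the centre of $\Sym(G)$ is trivial, so $f=\mathrm{id}$. Concretely, for distinct $x,y$ the transposition $(x\ y)$ is sent to $(f^{-1}(x)\ f^{-1}(y))$. Equality for all pairs forces $f^{-1}$ to preserve every $2$-subset, and with at least three points this gives $f=\mathrm{id}$.
\item If $|G|\le2$, then $\Aut(G)$ is trivial, so $f=\mathrm{id}$ automatically.
\end{itemize}
This is the only point needing care, since $\Sym(G)$ has nontrivial centre when $|G|=2$.

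\emph{Step 3: the bound.} The kernel is trivial, so $\Hol(G)$ embeds in $\Aut(\Perm(G))$. The inequality then follows from $|\Hol(G)|=|G|\cdot|\Aut(G)|$ (Corollary~\ref{cor:orbits}).

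I expect no real obstacle. The only subtle points are remembering that the homomorphism is defined via inversion, which does not change the kernel, and handling the small orders separately in Step~2.
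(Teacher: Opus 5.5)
Your proof is correct, but its second step differs from the paper's. Step~1 is Lemma~\ref{lem:hol-anchor}(1), since $\mathrm{id}^{(g,f)}(x)=x\,f^{-1}(g)$. Step~2 correctly reduces to the triviality of the centre of $\Sym(G)$ for $|G|\ge3$, and you handle $|G|\le2$ correctly via $\Aut(G)=1$.

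The paper tests instead on the single family $\theta_a(x)=x^{-1}a$, $a\in G$, which is the $\Hol(G)$-orbit of $\iota$. Writing $y=f(x)$, it compares $\theta_a^{(g,f)}(x)=f^{-1}(y^{-1}ag)$ with $\theta_a(x)=f^{-1}(y^{-1}f(a))$. This gives the one identity $ag=f(a)$ for every $a$: taking $a=e$ gives $g=e$, and then $f=\mathrm{id}$. That route uses only $|G|$ test vertices and is uniform in $G$, with no case distinction.

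Your route separates the translation part from the automorphism part. In Step~2 it appeals to a general fact about $\Sym(G)$ rather than to the group structure, which is exactly why the small orders need separate treatment. You could have avoided the case split inside your own framework by testing on all right translations $R_a$, not just on $\mathrm{id}=R_e$. Indeed $R_a^{(g,f)}(x)=x\,f^{-1}(ag)$, so $R_a^{(g,f)}=R_a$ for all $a$ again yields $ag=f(a)$, exactly the paper's identity.
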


\begin{proof}
For $a\in G$ put $\theta_a(x)=x^{-1}a$. Each $\theta_a$ is a bijection, hence a vertex,
and no assumption on $|G|$ is needed for this.  Suppose that $(g,f)$ acts trivially, so
that $\theta_a^{(g,f)}=\theta_a$ for every $a\in G$.  Writing $y=f(x)$ we have
\[
\theta_a^{(g,f)}(x)=f^{-1}\bigl(y^{-1}a\,g\bigr),
\qquad
\theta_a(x)=x^{-1}a=f^{-1}\bigl(y^{-1}f(a)\bigr),
\]
so $\theta_a^{(g,f)}=\theta_a$ forces $a\,g=f(a)$ for every $a\in G$.  Taking $a=e$
yields $g=e$, and then $f=\mathrm{id}$.
\end{proof}

\begin{rem}\label{rem:hol-triangles}
By the displayed formulas in the proofs of Lemmas~\ref{lem:hol_scm}
and~\ref{lem:hol_bc} (see also Remark~\ref{rem:hol-special}), for every
$\sigma\in\CB(G)$ and $(g,f)\in\Hol(G)$,
\[
\Dp_{\sigma^{(g,f)}}=(\Dp_\sigma)^{(g,f)},
\qquad
\Dm_{\sigma^{(g,f)}}=(\Dm_\sigma)^{(g,f)} .
\]
The corresponding formula for $\Dc$ fails: by Lemma~\ref{lem:hol_bc} the translation
$\Rt{g}$ is there replaced by $\conjg{g}$, and since $\Dc_\sigma$ is onto,
$\Dc_{\sigma^{(g,f)}}=(\Dc_\sigma)^{(g,f)}$ would force $\Rt{g}=\conjg{g}$, that is
$g=e$.
Hence $\Hol(G)$ carries the triangle $\{\mathrm{id},\sigma,\Dp_\sigma\}$ of
Proposition~\ref{prop:mols} to the triangle
$\{\Rt{h},\sigma^{(g,f)},\Dp_{\sigma^{(g,f)}}\}$, $h=f^{-1}(g)$, and, for $|G|$ odd, the
triangle $\{\sigma,\Dp_\sigma,\Dm_\sigma\}$ of
Proposition~\ref{prop:odd-triangles} to the corresponding triangle of
$\sigma^{(g,f)}$: the whole five-vertex configuration of
Figure~\ref{fig:orth-local-odd} is $\Hol(G)$-equivariant, up to replacing $\mathrm{id}$
by $\Rt{h}$ and $\iota$ by $\Rt{h}\circ\iota$.  Indeed
$\iota^{(g,f)}(x)=f^{-1}\bigl(f(x)^{-1}g\bigr)=x^{-1}h$, so that, exactly as for
$\mathrm{id}$, the vertex $\iota$ is fixed precisely by the elements of $\Aut(G)$.  Thus one
colouring bijection produces an orbit of such configurations, of the size computed in
Corollary~\ref{cor:orbits}.
\end{rem}

\bigskip

\section{Colourability of groups of small order}\label{sec:order27}

By Theorem~\ref{3-group-abelian}, every noncyclic abelian $3$-group of
order at most $27$ is colourable, while by Remark~\ref{rem:cyclic-excluded}
no nontrivial cyclic $3$-group is.  Hence among the $3$-groups of order at
most $27$ the only ones whose colourability is not yet settled are the two
nonabelian groups of order $27$: the extraspecial group of exponent $3$,
\[
H_3=\langle x,y,z \mid x^{3}=y^{3}=z^{3}=e,\ [x,y]=z,\ [x,z]=[y,z]=e\rangle,
\]
and the modular group
\[
M_{27}=\langle a,b \mid a^{9}=b^{3}=e,\ bab^{-1}=a^{4}\rangle .
\]

\begin{lem}\label{lem:order27}
Both $H_3$ and $M_{27}$ are colourable.
\end{lem}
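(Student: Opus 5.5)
The plan is to exhibit one explicit colouring bijection for each group and verify it by finite computation, as the Introduction indicates (computer search). Exponent considerations rule out the power-map shortcut of Corollary~\ref{cor:coprime6}. In $H_3$ the map $x\mapsto x^2=x^{-1}$ has $\Dp$ constant. In $M_{27}$ the maps $x\mapsto x^{k}$ fail because $x\mapsto x^{3}$ is not injective. So a genuine search is needed.

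To cut down the search space, we first normalise using the holomorph. By Corollary~\ref{cor:orbits} and Remark~\ref{rem:hol-special}(2), replacing $\sigma$ by $x\mapsto\sigma(x)g$ preserves $\CB(G)$. Hence we may assume $\sigma(e)=e$. We then search for $\sigma$ by backtracking over the $27$ elements in a fixed order. At each step we assign $\sigma(x)$ so that four partial maps remain injective: $\sigma$ itself, $x\mapsto x\sigma(x)$, $x\mapsto x^{-1}\sigma(x)$, and $x\mapsto\sigma(x)^{-1}x\sigma(x)$.

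Proposition~\ref{prop:class-local} lets us organise the $\Dc$ constraint class by class. The centre has order $3$, and on central elements $\Dc$ is the identity, so no constraint arises there. Every noncentral element lies in a conjugacy class of size $3$. On such a class $K=\{r,rz,rz^2\}$ (with $z$ generating the centre and $K=rZ(G)$), the value $\Dc(r)$ is determined by the coset of $C_G(r)$ containing $\sigma(r)$. Here $C_G(r)$ has index $3$. So the $\Dc$ condition says: for the three elements of $K$, the values $\sigma(r)$ must fall in cosets of the respective centralisers that produce the three distinct conjugates.

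It is natural to try structured candidates first, since these make verification by hand plausible. For $H_3$ the candidate is an affine map on the $\F_3$-coordinates $x^iy^jz^k$, of the form $(i,j,k)\mapsto(\alpha i+\beta j,\gamma i+\delta j,q(i,j)+\epsilon k)$ with $q$ quadratic. The linear part should be an orthomorphism of $C_3^2$ that is also a complete mapping; for example, multiplication by an element of $\F_9$ avoiding $0,\pm1$ has this property. For such a $\sigma$, the maps $\Dp$ and $\Dm$ have triangular form over $G/Z(G)$. Their bijectivity then reduces to bijectivity on $G/Z(G)\cong C_3^2$ together with the $k$-coordinate coefficient being $\epsilon\pm1\neq0$ and $\epsilon\neq0$, which forces $\epsilon=1$. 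The quadratic term $q$ is then tuned to make $\Dc$ bijective.

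Tuning $q$ amounts to checking a commutator condition: modulo $Z(G)$, the commutator $[x,\sigma(x)]$, viewed as a function on $G/Z(G)$, must separate each class $rZ(G)$ correctly. Since $\Dc(x)=x[x,\sigma(x)]$, with the commutator central and equal to $z^{\det(x,\sigma(x))}$, the requirement is that $x\mapsto ix\cdot\det$ adjusts $k$ bijectively within each fibre. Because $\det((i,j),A(i,j))$ depends only on $(i,j)$, the fibre map is $k\mapsto k+\mathrm{const}$. Hence $\Dc$ is automatically bijective. So for $H_3$ a linear choice should work, and we expect to give an explicit table.

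For $M_{27}$ the analogous structure is less symmetric, since $\Omega$ and $\mho$ interact. We expect to rely on the computer search and display a table. The table is checked by confirming that the four listed columns $\sigma,\Dp,\Dm,\Dc$ are permutations of the $27$ elements. The main obstacle is $M_{27}$, where no clean affine form is apparent. There we first try $\sigma(a^ib^j)=a^{f(i,j)}b^{g(i,j)}$ with $g$ linear and $f$ affine mod $9$, and fall back to exhaustive search if that fails.
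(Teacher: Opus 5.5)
Your framework (normalise $\sigma(e)=e$ by a right translation, find $\sigma$ by search, and certify it by checking that the columns $\sigma,\Dp,\Dm,\Dc$ are permutations) is exactly the paper's proof via Table~\ref{tab:H3-L3-add-fixing-id}. The gap is the structured shortcut you predict for $H_3$, which cannot work.

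Take $\sigma(i,j,k)=(A(i,j),\,q(i,j)+\epsilon k)$ and $x=(i,j,k)$, and use $x^{-1}=(-i,-j,-k+ij)$. Then the third coordinates of $\Dp(x)$ and $\Dm(x)$ are $(1+\epsilon)k+(\cdots)$ and $(\epsilon-1)k+(\cdots)$, where $(\cdots)$ depends only on $(i,j)$. Together with $\epsilon\neq0$, needed for $\sigma$ itself, this excludes every $\epsilon\in\F_3$. Your conclusion ``forces $\epsilon=1$'' is a slip: $\epsilon=1$ makes $\Dm$ constant on each coset $xZ(H_3)$.

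This is not a defect of the particular ansatz. Let $N\le Z(G)$ and let a bijection $\sigma(ht)=\psi_t(h)\varphi(t)$ map $N$-cosets into $N$-cosets. Then $\Dp(ht)=h\psi_t(h)\cdot t\varphi(t)$ and $\Dm(ht)=h^{-1}\psi_t(h)\cdot t^{-1}\varphi(t)$ each map the coset $Nt$ into a single coset. So bijectivity of $\Dp$ and $\Dm$ forces $\psi_t\in\SCM(N)$ for every $t$; this is the computation behind Corollary~\ref{central}, read as a necessary condition. For $N=Z(G)\cong C_3$ that is impossible by Remark~\ref{rem:cyclic-excluded}. Hence \emph{no} strong complete mapping of $H_3$ or of $M_{27}$ respects the cosets of the centre.

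Your first attempt for $M_{27}$ fails for the same reason. Here $Z(M_{27})=\langle a^{3}\rangle$. With $f$ affine mod $9$ and $g$ linear mod $3$ one has $f(i+3m,j)\equiv f(i,j)$ and $g(i+3m,j)\equiv g(i,j)\pmod 3$. So $a^{i}b^{j}\mapsto a^{f(i,j)}b^{g(i,j)}$ sends $Z$-cosets into $Z$-cosets.

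Consequently any certificate must scatter the cosets of the centre, as the paper's maps do. In $H_3$ the elements $(0,0,0),(0,0,1),(0,0,2)$ go to $(0,0,0),(0,1,2),(0,2,0)$. In $M_{27}$ the elements $(0,0),(3,0),(6,0)$ go to $(0,0),(4,2),(2,1)$. Each time the three images lie in three different cosets.

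The only viable part of your plan is therefore the unstructured backtracking search. The lemma is proved only when its output is displayed and the four columns are checked. As written, your proposal exhibits no colouring bijection of either group, and the one it predicts for $H_3$ is not even a strong complete mapping.
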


\begin{proof}
Explicit colouring bijections were found by computer search and are displayed in
Table~\ref{tab:H3-L3-add-fixing-id}; the code is available at \cite{code}.  Elements of
$H_3$ are written as triples $(a,b,c)\leftrightarrow x^{a}y^{b}z^{c}$,
with multiplication
\[
(a,b,c)\cdot(a',b',c')=(a+a',\,b+b',\,c+c'+ab')\pmod 3 ,
\]
and elements of $M_{27}$ as pairs $(u,j)\leftrightarrow a^{u}b^{j}$, with
\[
(u,j)\cdot(u',j')=(u+4^{\,j}u' \bmod 9,\ j+j' \bmod 3).
\]
For each group the table lists $\sigma(x)$ together with
$\Dp(x)=x\sigma(x)$, $\Dm(x)=x^{-1}\sigma(x)$ and
$\Dc(x)=\sigma(x)^{-1}x\sigma(x)$. Direct inspection shows that all eight
columns are permutations, so $\sigma\in\CB(H_3)$ and
$\sigma\in\CB(M_{27})$ respectively.
\end{proof}

\begin{longtable}{ccccc|ccccc}
\caption{Colouring bijections of $H_3$ (left, elements written as triples
$(a,b,c)\leftrightarrow x^{a}y^{b}z^{c}$) and of $M_{27}$ (right, elements written
as pairs $(u,j)\leftrightarrow a^{u}b^{j}$), together with
$\Dp(x)=x\sigma(x)$, $\Dm(x)=x^{-1}\sigma(x)$ and $\Dc(x)=\sigma(x)^{-1}x\sigma(x)$.
Each of the eight columns is a permutation of the group.}
\label{tab:H3-L3-add-fixing-id}\\
\toprule
$x$ & $\sigma(x)$ & $\Dp(x)$ & $\Dm(x)$ & $\Dc(x)$ &
$x$ & $\sigma(x)$ & $\Dp(x)$ & $\Dm(x)$ & $\Dc(x)$\\
\midrule
\endfirsthead
\multicolumn{10}{c}{\textit{Table \ref{tab:H3-L3-add-fixing-id} continued}}\\
\toprule
$x$ & $\sigma(x)$ & $\Dp(x)$ & $\Dm(x)$ & $\Dc(x)$ &
$x$ & $\sigma(x)$ & $\Dp(x)$ & $\Dm(x)$ & $\Dc(x)$\\
\midrule
\endhead
\bottomrule
\endfoot
$(0,0,0)$ & $(0,0,0)$ & $(0,0,0)$ & $(0,0,0)$ & $(0,0,0)$
&
$(0,0)$ & $(0,0)$ & $(0,0)$ & $(0,0)$ & $(0,0)$\\
$(0,0,1)$ & $(0,1,2)$ & $(0,1,0)$ & $(0,1,1)$ & $(0,0,1)$
&
$(0,1)$ & $(3,1)$ & $(3,2)$ & $(3,0)$ & $(0,1)$\\
$(0,0,2)$ & $(0,2,0)$ & $(0,2,2)$ & $(0,2,1)$ & $(0,0,2)$
&
$(0,2)$ & $(6,2)$ & $(6,1)$ & $(6,0)$ & $(0,2)$\\
$(0,1,0)$ & $(1,0,2)$ & $(1,1,2)$ & $(1,2,2)$ & $(0,1,2)$
&
$(1,0)$ & $(5,0)$ & $(6,0)$ & $(4,0)$ & $(1,0)$\\
$(0,1,1)$ & $(1,1,0)$ & $(1,2,1)$ & $(1,0,2)$ & $(0,1,0)$
&
$(1,1)$ & $(3,0)$ & $(4,1)$ & $(5,2)$ & $(1,1)$\\
$(0,1,2)$ & $(1,2,0)$ & $(1,0,2)$ & $(1,1,1)$ & $(0,1,1)$
&
$(1,2)$ & $(1,1)$ & $(8,0)$ & $(0,2)$ & $(4,2)$\\
$(0,2,0)$ & $(2,1,0)$ & $(2,0,0)$ & $(2,2,0)$ & $(0,2,2)$
&
$(2,0)$ & $(0,1)$ & $(2,1)$ & $(7,1)$ & $(5,0)$\\
$(0,2,1)$ & $(2,0,2)$ & $(2,2,0)$ & $(2,1,1)$ & $(0,2,0)$
&
$(2,1)$ & $(6,1)$ & $(8,2)$ & $(1,0)$ & $(5,1)$\\
$(0,2,2)$ & $(2,2,1)$ & $(2,1,0)$ & $(2,0,2)$ & $(0,2,1)$
&
$(2,2)$ & $(4,1)$ & $(3,0)$ & $(8,2)$ & $(2,2)$\\
$(1,0,0)$ & $(1,0,1)$ & $(2,0,1)$ & $(0,0,1)$ & $(1,0,0)$
&
$(3,0)$ & $(4,2)$ & $(7,2)$ & $(1,2)$ & $(3,0)$\\
$(1,0,1)$ & $(0,1,1)$ & $(1,1,0)$ & $(2,1,2)$ & $(1,0,2)$
&
$(3,1)$ & $(1,0)$ & $(7,1)$ & $(4,2)$ & $(6,1)$\\
$(1,0,2)$ & $(1,2,1)$ & $(2,2,2)$ & $(0,2,0)$ & $(1,0,1)$
&
$(3,2)$ & $(8,0)$ & $(5,2)$ & $(2,1)$ & $(6,2)$\\
$(1,1,0)$ & $(2,2,0)$ & $(0,0,2)$ & $(1,1,2)$ & $(1,1,0)$
&
$(4,0)$ & $(2,2)$ & $(6,2)$ & $(7,2)$ & $(7,0)$\\
$(1,1,1)$ & $(2,0,0)$ & $(0,1,1)$ & $(1,2,0)$ & $(1,1,2)$
&
$(4,1)$ & $(0,2)$ & $(4,0)$ & $(8,1)$ & $(7,1)$\\
$(1,1,2)$ & $(1,0,0)$ & $(2,1,2)$ & $(0,2,2)$ & $(1,1,1)$
&
$(4,2)$ & $(2,0)$ & $(0,2)$ & $(1,1)$ & $(7,2)$\\
$(1,2,0)$ & $(2,0,1)$ & $(0,2,1)$ & $(1,1,0)$ & $(1,2,2)$
&
$(5,0)$ & $(8,2)$ & $(4,2)$ & $(3,2)$ & $(2,0)$\\
$(1,2,1)$ & $(0,0,2)$ & $(1,2,0)$ & $(2,1,0)$ & $(1,2,1)$
&
$(5,1)$ & $(4,0)$ & $(3,1)$ & $(2,2)$ & $(8,1)$\\
$(1,2,2)$ & $(0,1,0)$ & $(1,0,0)$ & $(2,2,2)$ & $(1,2,0)$
&
$(5,2)$ & $(7,2)$ & $(0,1)$ & $(8,0)$ & $(8,2)$\\
$(2,0,0)$ & $(2,1,2)$ & $(1,1,1)$ & $(0,1,0)$ & $(2,0,2)$
&
$(6,0)$ & $(2,1)$ & $(8,1)$ & $(5,1)$ & $(6,0)$\\
$(2,0,1)$ & $(0,0,1)$ & $(2,0,2)$ & $(1,0,0)$ & $(2,0,1)$
&
$(6,1)$ & $(8,1)$ & $(2,2)$ & $(5,0)$ & $(3,1)$\\
$(2,0,2)$ & $(0,2,1)$ & $(2,2,1)$ & $(1,2,1)$ & $(2,0,0)$
&
$(6,2)$ & $(7,0)$ & $(1,2)$ & $(4,1)$ & $(3,2)$\\
$(2,1,0)$ & $(1,1,1)$ & $(0,2,0)$ & $(2,0,1)$ & $(2,1,1)$
&
$(7,0)$ & $(7,1)$ & $(5,1)$ & $(0,1)$ & $(4,0)$\\
$(2,1,1)$ & $(2,2,2)$ & $(1,0,1)$ & $(0,1,2)$ & $(2,1,0)$
&
$(7,1)$ & $(1,2)$ & $(2,0)$ & $(3,1)$ & $(4,1)$\\
$(2,1,2)$ & $(2,1,1)$ & $(1,2,2)$ & $(0,0,2)$ & $(2,1,2)$
&
$(7,2)$ & $(3,2)$ & $(1,1)$ & $(2,0)$ & $(1,2)$\\
$(2,2,0)$ & $(1,1,2)$ & $(0,0,1)$ & $(2,2,1)$ & $(2,2,0)$
&
$(8,0)$ & $(6,0)$ & $(5,0)$ & $(7,0)$ & $(8,0)$\\
$(2,2,1)$ & $(0,2,2)$ & $(2,1,1)$ & $(1,0,1)$ & $(2,2,2)$
&
$(8,1)$ & $(5,2)$ & $(1,0)$ & $(6,1)$ & $(2,1)$\\
$(2,2,2)$ & $(1,2,2)$ & $(0,1,2)$ & $(2,0,0)$ & $(2,2,1)$
&
$(8,2)$ & $(5,1)$ & $(7,0)$ & $(6,2)$ & $(5,2)$\\
\end{longtable}

\begin{rem}\label{rem:27-orbits}
By Corollary~\ref{cor:orbits} each of the two bijections of
Lemma~\ref{lem:order27} generates a whole $\Hol$-orbit of colouring
bijections.  We have
\[
\Aut(H_3)\cong (C_3\times C_3)\rtimes\GL(2,\F_3),\qquad |\Aut(H_3)|=432,
\]
so that $|\Hol(H_3)|=27\cdot432=11664$.
\[
\Aut(M_{27})\cong (C_3\times C_3)\rtimes C_6,\qquad |\Aut(M_{27})|=54,
\qquad\text{so}\qquad |\Hol(M_{27})|=27\cdot54=1458 .
\]
A direct computation \cite{code} shows that in both cases the stabiliser of
$\sigma$ in $\Hol(G)$ is trivial.  Hence
\[
|\Orb(\sigma)|=11664 \quad\text{for } H_3,
\qquad
|\Orb(\sigma)|=1458 \quad\text{for } M_{27},
\]
so $|\CB(H_3)|\ge 11664$ and $|\CB(M_{27})|\ge 1458$.  (The automorphism
action alone would only give the orbits of size $432$ and $54$.)
\end{rem}

\section{Lifting colouring bijections}\label{sec:lift}

\subsection{Lifting machinery}

We first set up the framework: a colouring bijection of $G$ is assembled
from a colouring bijection of a quotient $G/H$ and a family of bijections
of $H$.

\begin{lem}[Layer criterion]\label{layer}
Let $G$ be a group, $H\trianglelefteq G$, and let
$G=\bigsqcup_{t\in T}Ht$ be the decomposition of $G$ into cosets of $H$
determined by a transversal $T$.  Let $\sigma\colon G\to G$ be a
bijection and let $\bullet\in\{+,-,c\}$.

Assume that for every $t\in T$ there are a bijection
$F^{(t)}_{\bullet}\colon H\to H$ and an element $t_{\bullet}\in T$ with
\[
\Delta^{\,\bullet}_{\sigma}(ht)=F^{(t)}_{\bullet}(h)\;t_{\bullet}
\qquad\text{for all }h\in H .
\]
If $t\mapsto t_{\bullet}$ is a bijection of $T$, then
$\Delta^{\,\bullet}_{\sigma}$ is a bijection of $G$.

In particular, if this holds for all three symbols
$\bullet\in\{+,-,c\}$, then $\sigma\in\CB(G)$.
\end{lem}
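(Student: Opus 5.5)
The argument is a direct counting and injectivity check. Since $G$ is finite, it suffices to show that $\Delta^{\,\bullet}_\sigma$ is injective, or alternatively surjective. I will prove surjectivity, because the layer description makes the image explicit.

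Fix $\bullet\in\{+,-,c\}$ and take an arbitrary $g\in G$. Because $G=\bigsqcup_{s\in T}Hs$, there are unique $s\in T$ and $k\in H$ with $g=ks$. Since $t\mapsto t_\bullet$ is a bijection of $T$, there is a unique $t\in T$ with $t_\bullet=s$. Since $F^{(t)}_\bullet$ is a bijection of $H$, there is $h\in H$ with $F^{(t)}_\bullet(h)=k$. Then the hypothesis gives
\[
\Delta^{\,\bullet}_\sigma(ht)=F^{(t)}_\bullet(h)\,t_\bullet=ks=g .
\]
So $\Delta^{\,\bullet}_\sigma$ is onto $G$, and as a self-map of a finite set it is a bijection.

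Injectivity can also be seen directly. Suppose $\Delta^{\,\bullet}_\sigma(ht)=\Delta^{\,\bullet}_\sigma(h't')$. Then the right cosets $Ht_\bullet$ and $Ht'_\bullet$ coincide, so $t_\bullet=t'_\bullet$ by uniqueness of coset representatives in $T$. Hence $t=t'$, and cancelling $t_\bullet$ leaves $F^{(t)}_\bullet(h)=F^{(t)}_\bullet(h')$, which gives $h=h'$. This needs only that $Ft_\bullet\in Ht_\bullet$, which holds because $F$ takes values in $H$.

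For the final sentence, $\sigma$ is a bijection by assumption. The three maps $\Dp_\sigma$, $\Dm_\sigma$ and $\Dc_\sigma$ are then bijections by the case just proved, so $\sigma\in\CB(G)$ by Definition~\ref{def:scm_bc}.

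There is no real obstacle. The only point requiring care is using the uniqueness of the decomposition $g=ks$, $k\in H$, $s\in T$. Normality of $H$ is not even needed for this step; it enters only later, when the hypothesis of the lemma is verified.
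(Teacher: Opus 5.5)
Your proof is correct and is essentially the paper's argument. The paper notes that $\Delta^{\,\bullet}_\sigma$ maps each coset $Ht$ bijectively onto $Ht_\bullet$ via $F^{(t)}_\bullet$, and that the target cosets $Ht_\bullet$ run through all cosets exactly once; your elementwise surjectivity and injectivity checks unpack exactly this, and your remark that normality of $H$ plays no role in this step is also accurate.
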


\begin{proof}
For a fixed $t\in T$ the map $\Delta^{\,\bullet}_{\sigma}$ sends $Ht$ into
$Ht_{\bullet}$ by $ht\mapsto F^{(t)}_{\bullet}(h)t_{\bullet}$, and this
restriction $Ht\to Ht_{\bullet}$ is a bijection because $F^{(t)}_{\bullet}$
is one.  Since $t\mapsto t_{\bullet}$ is a bijection of $T$, the cosets
$Ht_{\bullet}$ run exactly once through all cosets of $H$.  Hence
$\Delta^{\,\bullet}_{\sigma}$ is a bijection of $G$.  The last assertion is
Definition~\ref{def:scm_bc}.
\end{proof}

\begin{lem}\label{lem:transversal-scheme}
Let $G$ be a group, $H\trianglelefteq G$, let $T$ be a transversal of $H$
in $G$, and let $\theta\colon G/H\to T$ be the section associated with
$\pi$, so that $\pi|_{T}$ and $\theta$ are mutually inverse bijections.

Let $\Phi\in\CB(G/H)$ and put
\[
\varphi\colon T\to T,\qquad \varphi(t)=\theta\bigl(\Phi(\pi(t))\bigr).
\]
For $t\in T$ write $q=\pi(t)$ and
\[
t_{+}=\theta\bigl(q\,\Phi(q)\bigr),
\qquad
t_{-}=\theta\bigl(q^{-1}\Phi(q)\bigr),
\qquad
t_{c}=\theta\bigl(\Phi(q)^{-1}q\,\Phi(q)\bigr).
\]
Then:
\begin{enumerate}
\item $\varphi$ is a bijection of $T$;
\item for every $t\in T$ there exist uniquely determined
$\xi(t),\zeta(t),\omega(t)\in H$ with
\[
t\,\varphi(t)=\xi(t)\,t_{+},
\qquad
t^{-1}\varphi(t)=\zeta(t)\,t_{-},
\qquad
\varphi(t)^{-1}t\,\varphi(t)=\omega(t)\,t_{c};
\]
\item the maps $t\mapsto t_{+}$, $t\mapsto t_{-}$, $t\mapsto t_{c}$ are
bijections of $T$.
\end{enumerate}
\end{lem}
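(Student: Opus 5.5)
The lemma is essentially bookkeeping, and I will prove it by transporting everything through the mutually inverse bijections $\pi|_T\colon T\to G/H$ and $\theta\colon G/H\to T$.

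\emph{Step (1).} By definition $\varphi=\theta\circ\Phi\circ\pi|_T$. This is a composite of three bijections, since $\Phi\in\CB(G/H)$ is in particular a bijection of $G/H$. Hence $\varphi$ is a bijection of $T$.

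\emph{Step (3).} Write the three maps in the same way. We have
\[
t\mapsto t_{+}\;=\;\theta\circ\Dp_\Phi\circ\pi|_T .
\]
Likewise $t\mapsto t_-$ is $\theta\circ\Dm_\Phi\circ\pi|_T$ and $t\mapsto t_c$ is $\theta\circ\Dc_\Phi\circ\pi|_T$, where the three diagonal maps are computed in $G/H$. Because $\Phi\in\CB(G/H)$, the maps $\Dp_\Phi$, $\Dm_\Phi$, $\Dc_\Phi$ are bijections of $G/H$ by Definition~\ref{def:scm_bc}. Each composite is therefore a bijection of $T$.

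\emph{Step (2).} Since $\pi$ is a homomorphism and $\pi(\varphi(t))=\Phi(q)$, we get
\[
\pi\bigl(t\varphi(t)\bigr)=q\,\Phi(q)=\pi(t_+).
\]
So $t\varphi(t)$ and $t_+$ lie in the same coset of $H$. Normality of $H$ makes left and right cosets agree, so $t\varphi(t)\in Ht_+$. This gives $t\varphi(t)=\xi(t)t_+$ with $\xi(t)=t\varphi(t)t_+^{-1}\in H$. The element $\xi(t)$ is unique because right cancellation by $t_+$ determines it.

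The same computation handles the other two products:
\[
\pi\bigl(t^{-1}\varphi(t)\bigr)=q^{-1}\Phi(q),\qquad \pi\bigl(\varphi(t)^{-1}t\varphi(t)\bigr)=\Phi(q)^{-1}q\Phi(q).
\]
These produce $\zeta(t)$ and $\omega(t)$ in the same way.

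No step is a real obstacle. The only point needing care is that the identities in (2) use right cosets $Ht_\bullet$. This matches the form required by Lemma~\ref{layer}, and it holds because $H\trianglelefteq G$.
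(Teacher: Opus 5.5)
Your proposal is correct and follows the paper's proof essentially step for step. Parts (1) and (3) are the same observations: $\varphi$ and each map $t\mapsto t_\bullet$ are composites $\theta\circ(\cdot)\circ\pi|_T$ of bijections. Part (2) is the same comparison of images under $\pi$, with uniqueness of the $H$-part coming from right cancellation by $t_\bullet$.
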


\begin{proof}
(1)  $\varphi=\theta\circ\Phi\circ\pi|_{T}$ is a composition of
bijections.

(2)  Every $g\in G$ is uniquely of the form $g=ht$ with $h\in H$, $t\in T$,
and then $t=\theta(\pi(g))$.  Applying this to the elements
$t\varphi(t)$, $t^{-1}\varphi(t)$ and $\varphi(t)^{-1}t\varphi(t)$, whose
images under $\pi$ are $q\Phi(q)$, $q^{-1}\Phi(q)$ and
$\Phi(q)^{-1}q\Phi(q)$ respectively, we obtain exactly the stated
decompositions, with $t_{+},t_{-},t_{c}$ as defined and with uniquely
determined $H$-parts $\xi(t),\zeta(t),\omega(t)$.

(3)  By construction
$t_{\bullet}=\theta\bigl(\Delta^{\,\bullet}_{\Phi}(\pi(t))\bigr)$ for
$\bullet\in\{+,-,c\}$.  Since $\Phi\in\CB(G/H)$, each
$\Delta^{\,\bullet}_{\Phi}$ is a bijection of $G/H$, and $\theta$, $\pi|_T$
are mutually inverse bijections. Hence each $t\mapsto t_{\bullet}$ is a
bijection of $T$.
\end{proof}

The next lemma is the engine of the whole paper: it reduces the
construction of a colouring bijection of $G$ to the verification of three
explicit conditions inside $H$.

\begin{lem}[Twisted lift]\label{lem:twisted-lift}
Let $G$ be a group, $H\trianglelefteq G$, let $T$, $\theta$, $\Phi$,
$\varphi$, $\xi,\zeta,\omega$ and $t_{+},t_{-},t_{c}$ be as in
Lemma~\ref{lem:transversal-scheme}.

Suppose that for every $t\in T$ we are given a bijection
$\psi_t\colon H\to H$ such that the three maps
\[
\begin{aligned}
P_t(h)&=h\cdot\rho_t\bigl(\psi_t(h)\bigr),\\
M_t(h)&=h^{-1}\cdot\psi_t(h),\\
K_t(h)&=\psi_t(h)^{-1}\cdot h\cdot\rho_t\bigl(\psi_t(h)\bigr)
\end{aligned}
\]
are bijections of $H$.  Then
\[
\sigma\colon G\to G,
\qquad
\sigma(ht)=\psi_t(h)\,\varphi(t)
\qquad (h\in H,\ t\in T)
\]
is a colouring bijection of $G$.  In particular $G$ is colourable.
\end{lem}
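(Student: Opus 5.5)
The plan is to verify the hypotheses of the Layer criterion (Lemma~\ref{layer}) for each of the three symbols $\bullet\in\{+,-,c\}$, using the decompositions of Lemma~\ref{lem:transversal-scheme}(2) and the bijectivity of $t\mapsto t_\bullet$ from Lemma~\ref{lem:transversal-scheme}(3). The one tool needed repeatedly is the identity $tx=\rho_t(x)\,t$ for $x\in H$, together with the fact that each $\rho_g$ restricts to an automorphism of $H$ because $H\trianglelefteq G$.

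First, $\sigma$ is a bijection. For fixed $t$ it maps $Ht$ onto $H\varphi(t)$ via $ht\mapsto\psi_t(h)\varphi(t)$, which is bijective because $\psi_t$ is. Since $\varphi$ is a bijection of $T$ by Lemma~\ref{lem:transversal-scheme}(1), the cosets $H\varphi(t)$ are pairwise distinct and exhaust $G/H$.

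Next I compute the three diagonal maps on the coset $Ht$, moving every $H$-factor to the left of the transversal element. Moving $t$ past $\psi_t(h)$ and then using $t\varphi(t)=\xi(t)t_+$ gives
\[
\Dp_\sigma(ht)=h\,t\,\psi_t(h)\,\varphi(t)=h\,\rho_t\bigl(\psi_t(h)\bigr)\,t\,\varphi(t)=P_t(h)\,\xi(t)\,t_{+}.
\]
Conjugating the $H$-element $M_t(h)$ past $t^{-1}$ and then using $t^{-1}\varphi(t)=\zeta(t)t_-$ gives
\[
\Dm_\sigma(ht)=t^{-1}h^{-1}\psi_t(h)\,\varphi(t)=\rho_{t^{-1}}\bigl(M_t(h)\bigr)\,t^{-1}\varphi(t)=\rho_{t^{-1}}\bigl(M_t(h)\bigr)\,\zeta(t)\,t_{-}.
\]
For the conjugation map, I first rewrite $\psi_t(h)^{-1}\,h\,t\,\psi_t(h)=K_t(h)\,t$, then move $K_t(h)$ past $\varphi(t)^{-1}$ and use $\varphi(t)^{-1}t\varphi(t)=\omega(t)t_c$:
\[
\Dc_\sigma(ht)=\varphi(t)^{-1}\,K_t(h)\,t\,\varphi(t)=\rho_{\varphi(t)^{-1}}\bigl(K_t(h)\bigr)\,\omega(t)\,t_{c}.
\]

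In each case the $H$-part $F^{(t)}_\bullet$ is one of the hypothesised bijections $P_t$, $M_t$, $K_t$. It is possibly composed on the left with an automorphism of $H$, and then with right multiplication by the fixed element $\xi(t)$, $\zeta(t)$ or $\omega(t)$ of $H$. Each such composite is therefore a bijection of $H$. Combined with Lemma~\ref{lem:transversal-scheme}(3), Lemma~\ref{layer} then yields $\sigma\in\CB(G)$.

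The only delicate point is the bookkeeping in the $\Dc$ computation, namely recognising $K_t(h)$ inside the product. This works precisely because the factor $\rho_t(\psi_t(h))$ in $K_t$ was chosen to absorb the commutation of $t$ past $\psi_t(h)$. Everything else is routine.
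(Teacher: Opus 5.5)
Your proposal is correct and matches the paper's proof essentially step for step. You obtain the same three coset decompositions, with $H$-parts $P_t(h)\,\xi(t)$, $\rho_{t^{-1}}\bigl(M_t(h)\bigr)\,\zeta(t)$ and $\rho_{\varphi(t)^{-1}}\bigl(K_t(h)\bigr)\,\omega(t)$, and then conclude via Lemma~\ref{lem:transversal-scheme}(3) and the Layer criterion.
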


\begin{proof}
Since every $g\in G$ is uniquely of the form $ht$, and since each
$\psi_t$ and $\varphi$ are bijections, $\sigma$ is a bijection of $G$.

Let $g=ht$ with $h\in H$, $t\in T$.  Using
$t\,\psi_t(h)=\rho_t(\psi_t(h))\,t$ we obtain
\[
\Dp_\sigma(ht)
=(ht)\,\psi_t(h)\varphi(t)
=h\,\rho_t\bigl(\psi_t(h)\bigr)\,t\,\varphi(t)
=\Bigl(P_t(h)\,\xi(t)\Bigr)\,t_{+},
\]
\[
\begin{aligned}
\Dm_\sigma(ht)
&=(ht)^{-1}\psi_t(h)\varphi(t)
 =t^{-1}\bigl(h^{-1}\psi_t(h)\bigr)\varphi(t)\\
&=\rho_{t^{-1}}\bigl(M_t(h)\bigr)\,t^{-1}\varphi(t)
 =\Bigl(\rho_{t^{-1}}\bigl(M_t(h)\bigr)\,\zeta(t)\Bigr)\,t_{-} .
\end{aligned}
\]
For the third map, first note that
\[
\psi_t(h)^{-1}\,h\,t\,\psi_t(h)
=\psi_t(h)^{-1}\,h\,\rho_t\bigl(\psi_t(h)\bigr)\,t
=K_t(h)\,t ,
\]
whence
\[
\begin{aligned}
\Dc_\sigma(ht)
&=\bigl(\psi_t(h)\varphi(t)\bigr)^{-1}(ht)\bigl(\psi_t(h)\varphi(t)\bigr)
 =\varphi(t)^{-1}K_t(h)\,t\,\varphi(t)\\
&=\rho_{\varphi(t)^{-1}}\bigl(K_t(h)\bigr)\,\varphi(t)^{-1}t\,\varphi(t)
 =\Bigl(\rho_{\varphi(t)^{-1}}\bigl(K_t(h)\bigr)\,\omega(t)\Bigr)\,t_{c}.
\end{aligned}
\]
Thus $\Delta^{\,\bullet}_\sigma(ht)=F^{(t)}_{\bullet}(h)\,t_{\bullet}$ for
$\bullet\in\{+,-,c\}$, where
\[
F^{(t)}_{+}(h)=P_t(h)\,\xi(t),
\qquad
F^{(t)}_{-}(h)=\rho_{t^{-1}}\bigl(M_t(h)\bigr)\,\zeta(t),
\qquad
F^{(t)}_{c}(h)=\rho_{\varphi(t)^{-1}}\bigl(K_t(h)\bigr)\,\omega(t).
\]
As $H\trianglelefteq G$, the maps $\rho_{t^{-1}}$ and
$\rho_{\varphi(t)^{-1}}$ restrict to automorphisms of $H$. Right
multiplications by $\xi(t),\zeta(t),\omega(t)$ are bijections of $H$.
Hence, by hypothesis, all three maps $F^{(t)}_{\bullet}\colon H\to H$ are
bijections.  By Lemma~\ref{lem:transversal-scheme} the maps
$t\mapsto t_{\bullet}$ are bijections of $T$, so Lemma~\ref{layer} shows
that $\Dp_\sigma$, $\Dm_\sigma$ and $\Dc_\sigma$ are bijections of $G$.
Therefore $\sigma\in\CB(G)$.
\end{proof}

The central case is now immediate: there the third condition is vacuous.

\begin{cor}\label{central}
Let $G$ be a group and let $H\le Z(G)$.  If $G/H$ is colourable and $H$
is strongly admissible, then $G$ is colourable.
\end{cor}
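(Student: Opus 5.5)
We apply the Twisted lift (Lemma~\ref{lem:twisted-lift}) with the central subgroup $H$. Fix a transversal $T$ of $H$ in $G$ and a colouring bijection $\Phi\in\CB(G/H)$, which exists by hypothesis. Since $H$ is strongly admissible, choose $\tau\in\SCM(H)$, and take $\psi_t=\tau$ for every $t\in T$. It then suffices to check that the three maps $P_t$, $M_t$, $K_t$ of Lemma~\ref{lem:twisted-lift} are bijections of $H$.

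Because $H\le Z(G)$, every inner automorphism $\rho_t$ restricts to the identity on $H$. So for every $t\in T$ the three maps become
\[
P_t(h)=h\,\tau(h)=\Dp_\tau(h),\qquad
M_t(h)=h^{-1}\tau(h)=\Dm_\tau(h),\qquad
K_t(h)=\tau(h)^{-1}h\,\tau(h)=h .
\]
The first two are bijections since $\tau\in\SCM(H)$. The third is the identity, because $H$ is abelian; this is Remark~\ref{rem:abelian} for $H$. Lemma~\ref{lem:twisted-lift} therefore yields the colouring bijection $\sigma(ht)=\tau(h)\varphi(t)$ of $G$.

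There is no real obstacle here. The only point needing care is that centrality is used twice: once to kill the twist $\rho_t$ in $P_t$ and $K_t$, and once (through commutativity of $H$) to make the conjugation condition $K_t$ vacuous. This is why only strong admissibility of $H$, and not colourability, is required.
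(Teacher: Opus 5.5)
Your proposal is correct and follows essentially the same route as the paper. The paper also applies Lemma~\ref{lem:twisted-lift} with a constant choice $\psi_t=\psi\in\SCM(H)$, and uses centrality of $H$ to get $\rho_t|_H=\mathrm{id}_H$, which turns $P_t$, $M_t$, $K_t$ into $\Dp_\psi$, $\Dm_\psi$ and $\mathrm{id}_H$.
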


\begin{proof}
Choose $\Phi\in\CB(G/H)$ and $\psi\in\SCM(H)$, and apply
Lemma~\ref{lem:twisted-lift} with $\psi_t=\psi$ for all $t\in T$.  Since
$H\le Z(G)$, we have $\rho_t|_H=\mathrm{id}_H$ and $H$ is abelian, so
\[
P(h)=h\,\psi(h)=\Dp_\psi(h),
\qquad
M(h)=h^{-1}\psi(h)=\Dm_\psi(h),
\qquad
K(h)=\psi(h)^{-1}h\,\psi(h)=h .
\]
The first two are bijections because $\psi\in\SCM(H)$, and
$K=\mathrm{id}_H$.  Hence $G$ is colourable.
\end{proof}

\bigskip

\subsection{The two lifting theorems}

In this subsection we prove two lifting theorems: a colouring bijection of
$G/H$ can be lifted to a colouring bijection of $G$ when
$H\cong C_3\times C_3$, and when $H\cong C_9\times C_3$ contains a
central element of order $9$.  In both cases all the work is done by
Lemma~\ref{lem:twisted-lift}: it remains only to produce, for each
$t\in T$, a bijection $\psi_t\colon H\to H$ for which the three maps
\begin{equation}\label{eq:PMK}
P_t(h)=h\cdot\rho_t\bigl(\psi_t(h)\bigr),
\qquad
M_t(h)=h^{-1}\psi_t(h),
\qquad
K_t(h)=\psi_t(h)^{-1}\,h\,\rho_t\bigl(\psi_t(h)\bigr)
\end{equation}
are bijections of $H$.  Only the restriction of $\rho_t$ to $H$ enters
\eqref{eq:PMK}, and by Corollary~\ref{central} we may throughout assume
that $H\not\le Z(G)$.

We begin with a lemma describing this restriction.

\begin{lem}\label{lem:conjugation}
Let $G$ be a $3$-group and suppose
\[
H=\langle c\rangle\times\langle b\rangle\trianglelefteq G,
\qquad c\in Z(G),\qquad |b|=3^{k},\ k\ge1 .
\]
Then for every $t\in G$ there are integers
$m(t)\in\{0,1,\dots,3^{k-1}-1\}$ and $l(t)$ such that
\[
t\,b\,t^{-1}=b^{\,1+3m(t)}c^{\,l(t)} .
\]
\end{lem}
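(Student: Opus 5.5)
Since $H\trianglelefteq G$, conjugation $\rho_t$ restricts to an automorphism of $H$, and since $c\in Z(G)$ it fixes $c$. So the task is to work out which automorphisms of $H=\langle c\rangle\times\langle b\rangle$ fixing $c$ can come from a $3$-group. First, $tbt^{-1}\in H$ has the same order $3^{k}$ as $b$. Write $tbt^{-1}=b^{s}c^{l}$ with $s$ taken modulo $3^{k}$. Then $\rho_t$ induces an automorphism of the quotient $H/\langle c\rangle\cong\langle b\rangle\cong C_{3^k}$, namely $b\mapsto b^{s}$ there. This forces $s$ to be a unit modulo $3^k$, i.e. $3\nmid s$.

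The key step is to show $s\equiv1\pmod 3$. I will use the fact that $G$ is a $3$-group, so the image of $t$ in $\Aut(H/\langle c\rangle)\cong(\mathbb Z/3^k\mathbb Z)^\times$ has $3$-power order. The element $s$ then has $3$-power order in $(\mathbb Z/3^k\mathbb Z)^\times$, which has order $2\cdot3^{k-1}$. Its Sylow $3$-subgroup is exactly the kernel of reduction mod $3$, namely $\{s: s\equiv1\pmod3\}$. Hence $s\equiv 1\pmod 3$.

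Writing $s=1+3m$ and reducing $m$ modulo $3^{k-1}$ changes $b^{s}$ only by a power of $b^{3^k}=e$. So $m(t)$ can be chosen in $\{0,\dots,3^{k-1}-1\}$, and $l(t)$ is the exponent of $c$. For $k=1$ the range is just $\{0\}$, and the argument gives $tbt^{-1}=b\,c^{l}$ directly, since $s\equiv1\pmod3$ means $b^{s}=b$.

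The only subtle point is the quotient step. I must make sure that $\langle c\rangle$ is $t$-invariant, which holds because $c$ is central, and that the induced map on $H/\langle c\rangle$ is conjugation by $t$. Both are immediate. No other obstacle is expected.
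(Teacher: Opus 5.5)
Your proposal is correct and follows essentially the same route as the paper's proof. Both pass to the automorphism that conjugation by $t$ induces on $H/\langle c\rangle\cong C_{3^k}$, and then use that the image of a $3$-group in $(\mathbb Z/3^k\mathbb Z)^\times$ lies in its Sylow $3$-subgroup $\{1+3m\}$.
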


\begin{proof}
Fix $t\in G$.  As $H\trianglelefteq G$ we have $tbt^{-1}\in H$, and since
$c\in Z(G)$, conjugation by $t$ fixes $c$ and induces an automorphism of
$H/\langle c\rangle\cong\langle b\rangle\cong C_{3^{k}}$.  This yields a
homomorphism
$G\to\Aut(C_{3^{k}})\cong(\mathbb Z/3^{k}\mathbb Z)^{\times}$ whose image,
$G$ being a $3$-group, lies in the unique Sylow $3$-subgroup
\[
\{\,1+3m \bmod 3^{k}\ :\ m=0,1,\dots,3^{k-1}-1\,\}
\]
of $(\mathbb Z/3^{k}\mathbb Z)^{\times}$.  Hence
$tbt^{-1}\equiv b^{1+3m(t)}\pmod{\langle c\rangle}$ for some $m(t)$,
which is the assertion.
\end{proof}

\bigskip

\subsection[Case H = C3 x C3]{Case \texorpdfstring{$H=C_3\times C_3$}{H = C3 x C3}}

Here $H$ is regarded as a $2$-dimensional vector space over $\F_3$, and
automorphisms of $H$ are identified with matrices in $\GL(2,\F_3)$ with
respect to a fixed basis. Let $\Mat(f)$ denote the matrix of $f$.  Put
\[
X_1=\begin{pmatrix}0&1\\1&1\end{pmatrix},
\qquad
X_2=\begin{pmatrix}0&1\\2&1\end{pmatrix},
\qquad
C(\lambda)=\begin{pmatrix}1&\lambda\\0&1\end{pmatrix}\quad(\lambda\in\F_3).
\]

\begin{lem}\label{lem:pair-choice}
For every $\lambda\in\F_3$ there is $j\in\{1,2\}$ such that the three
matrices
\[
X_j-I,
\qquad
I+C(\lambda)X_j,
\qquad
I+\bigl(C(\lambda)-I\bigr)X_j
\]
are invertible. One may take $j=1$ for $\lambda\in\{0,1\}$ and $j=2$ for
$\lambda=2$.
\end{lem}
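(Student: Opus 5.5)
This is a finite computation over $\F_3$, and I would simply verify it case by case. A $2\times2$ matrix over $\F_3$ is invertible exactly when its determinant is nonzero. So for each of the three values of $\lambda$ and the prescribed $j$, I compute three determinants and check that none vanishes mod $3$.

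First, $X_j-I$. This matrix does not depend on $\lambda$, so it is handled once for each $j$.
\begin{itemize}
\item For $j=1$, $X_1-I=\begin{pmatrix}-1&1\\1&0\end{pmatrix}$ has determinant $-1\equiv2$.
\item For $j=2$, $X_2-I=\begin{pmatrix}-1&1\\2&0\end{pmatrix}$ has determinant $-2\equiv1$.
\end{itemize}
Both are invertible.

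Next, compute $C(\lambda)X_j=\begin{pmatrix}\lambda a_{21}&1+\lambda\\a_{21}&1\end{pmatrix}$, where $a_{21}\in\{1,2\}$ is the lower-left entry of $X_j$. Then
\[
I+C(\lambda)X_j=\begin{pmatrix}1+\lambda a_{21}&1+\lambda\\a_{21}&2\end{pmatrix},
\]
with determinant $2(1+\lambda a_{21})-a_{21}(1+\lambda)=2-a_{21}+\lambda a_{21}$. Also $(C(\lambda)-I)X_j=\begin{pmatrix}\lambda a_{21}&\lambda\\0&0\end{pmatrix}$, so $I+(C(\lambda)-I)X_j$ is upper triangular with determinant $1+\lambda a_{21}$.

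Now I substitute.
\begin{itemize}
\item For $j=1$ ($a_{21}=1$): the determinants are $1+\lambda$ and $1+\lambda$. These are nonzero for $\lambda\in\{0,1\}$ and vanish at $\lambda=2$, which is why $j=1$ fails there.
\item For $j=2$ ($a_{21}=2$): the determinants are $2\lambda$ and $1+2\lambda$. At $\lambda=2$ they equal $4\equiv1$ and $5\equiv2$, both nonzero.
\end{itemize}
This establishes the lemma with exactly the stated choices.

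There is no real obstacle here. The only care needed is the mod-$3$ arithmetic and getting the order of the matrix product $C(\lambda)X_j$ right, so I would double-check that multiplication.
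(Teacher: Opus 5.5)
Your proof is correct and is essentially the paper's argument: a direct determinant computation over $\F_3$, with your parametrisation by the lower-left entry $a_{21}$ reproducing exactly the paper's values $1+\lambda,\,1+\lambda$ for $X_1$ and $2\lambda,\,1+2\lambda$ for $X_2$. The paper also records $\det X_1=2$ and $\det X_2=1$; this is not part of the lemma, but it is what makes $\psi_t$ an automorphism in Theorem~\ref{thm:lift-C3xC3}.
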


\begin{proof}
Both $X_1,X_2$ lie in $\GL(2,\F_3)$, since $\det X_1=2$ and $\det X_2=1$.
A direct computation gives
\[
\det(X_1-I)=2,
\qquad
\det\bigl(I+C(\lambda)X_1\bigr)=1+\lambda,
\qquad
\det\bigl(I+(C(\lambda)-I)X_1\bigr)=1+\lambda,
\]
and
\[
\det(X_2-I)=1,
\qquad
\det\bigl(I+C(\lambda)X_2\bigr)=2\lambda,
\qquad
\det\bigl(I+(C(\lambda)-I)X_2\bigr)=1+2\lambda .
\]
Hence all three determinants are nonzero for $X_1$ precisely when
$\lambda\neq2$, and for $X_2$ precisely when $\lambda=2$.
\end{proof}

\begin{thm}\label{thm:lift-C3xC3}
Let $G$ be a $3$-group with a normal subgroup $H\cong C_3\times C_3$.
If $G/H$ is colourable, then $G$ is colourable.
\end{thm}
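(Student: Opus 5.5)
The plan is to apply Lemma~\ref{lem:twisted-lift} with linear maps $\psi_t$. Fix $\Phi\in\CB(G/H)$, a transversal $T$, and the bijection $\varphi$ of Lemma~\ref{lem:transversal-scheme}. Then it suffices to produce, for each $t\in T$, a bijection $\psi_t$ of $H$ for which the three maps $P_t$, $M_t$, $K_t$ of \eqref{eq:PMK} are bijections. By Corollary~\ref{central} we may assume $H\not\le Z(G)$.

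The first step is to choose a good basis of $H$. Since $G$ is a $3$-group and $H$ is a nontrivial normal subgroup, $H\cap Z(G)\neq 1$. Pick $c\in H\cap Z(G)$ of order $3$ and complete it to a basis $\{c,b\}$ of $H\cong\F_3^2$. Lemma~\ref{lem:conjugation} with $k=1$ gives $m(t)=0$, so $tbt^{-1}=b\,c^{l(t)}$ and $tct^{-1}=c$. Hence, in the basis $(c,b)$ with column vectors, $\Mat(\rho_t|_H)=C(\lambda_t)$ with $\lambda_t=l(t)\bmod 3$.

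Next, take $\psi_t$ to be the automorphism with matrix $X_{j(t)}$, where $j(t)$ is chosen by Lemma~\ref{lem:pair-choice} for $\lambda=\lambda_t$. Writing $H$ additively, the three maps of \eqref{eq:PMK} become linear. We have $P_t=I+C(\lambda_t)X_j$ and $M_t=X_j-I$. For $K_t$, note that $H$ is abelian, so
\[
K_t(h)=-\psi_t(h)+h+\rho_t(\psi_t(h)) = \bigl(I+(C(\lambda_t)-I)X_j\bigr)h .
\]
These are exactly the three matrices of Lemma~\ref{lem:pair-choice}, so all three maps are invertible. Each $\psi_t$ is a bijection since $X_j\in\GL(2,\F_3)$. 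Lemma~\ref{lem:twisted-lift} then yields $\sigma\in\CB(G)$.

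I expect no real obstacle, since Lemma~\ref{lem:pair-choice} was clearly designed for this step. The points to check carefully are the following.
\begin{itemize}
\item The matrix convention: the matrices must act on coordinate vectors with respect to $(c,b)$ in the same order in which $C(\lambda)$ was written. The upper triangular form must mean that $\rho_t$ fixes the first basis vector $c$ and sends $b$ to $b+\lambda c$. If the convention comes out transposed, swap the order of the basis.
\item The existence of a central element of order $3$ inside $H$, which follows from the standard fact that a nontrivial normal subgroup of a $p$-group meets the centre nontrivially.
\end{itemize}
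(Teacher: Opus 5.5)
Your proposal is correct and follows the paper's proof essentially step for step. It uses the same basis of $H$ built from a central element of order $3$, the identification $\Mat(\rho_t)=C(\lambda_t)$ via Lemma~\ref{lem:conjugation} with $k=1$, the choice $\Mat(\psi_t)=X_{j(\lambda_t)}$ from Lemma~\ref{lem:pair-choice}, and the matrices $I+R\Psi$, $\Psi-I$, $I+(R-I)\Psi$ fed into Lemma~\ref{lem:twisted-lift}. The reduction to $H\not\le Z(G)$ tacitly uses that $C_3\times C_3$ is strongly admissible, but it is not actually needed, since your construction works verbatim with $\lambda_t=0$.
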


\begin{proof}
The group $C_3\times C_3$ is strongly admissible by
Theorem~\ref{3-group-abelian}, so Corollary~\ref{central} settles the
case $H\le Z(G)$.  Assume therefore $H\not\le Z(G)$.

Since $G$ is a $3$-group and $H\trianglelefteq G$ is nontrivial, we have
$H\cap Z(G)\neq\{e\}$.  As $H\not\le Z(G)$ and $|H|=9$, this forces
$|H\cap Z(G)|=3$.  Choose
$z\in H\cap Z(G)$ of order $3$ and $b\in H\setminus\langle z\rangle$.  As
$H$ has exponent $3$,
\[
H=\langle z\rangle\times\langle b\rangle,
\qquad z\in Z(G),\quad b\notin Z(G).
\]
By Lemma~\ref{lem:conjugation} (applied with $c=z$ and $k=1$), for every
$t\in G$ there is $k(t)\in\{0,1,2\}$ with
\[
\rho_t(z)=z,
\qquad
\rho_t(b)=z^{\,k(t)}b .
\]
Writing every element of $H$ uniquely as $z^{x}b^{y}$ we identify
$H\cong\F_3^{2}$ via $z^{x}b^{y}\leftrightarrow(x,y)$, and in this basis
\begin{equation}\label{eq:rho-matrix}
\Mat(\rho_t)=C\bigl(k(t)\bigr),\qquad t\in G .
\end{equation}

Fix $\Phi\in\CB(G/H)$ and let $T,\varphi,\xi,\zeta,\omega,
t_{+},t_{-},t_{c}$ be as in Lemma~\ref{lem:transversal-scheme}.  For
$t\in T$ set
\[
\lambda(t):=k(t)\in\F_3,
\]
and let $\psi_t\in\Aut(H)$ be the linear automorphism with
\[
\Mat(\psi_t)=X_j,
\qquad
j=j\bigl(\lambda(t)\bigr)\in\{1,2\}
\text{ chosen as in Lemma~\ref{lem:pair-choice}} .
\]

We verify the three conditions of Lemma~\ref{lem:twisted-lift}.  Write
$H$ additively and abbreviate $\Psi=\Mat(\psi_t)$ and
$R=\Mat(\rho_t)=C(\lambda(t))$.  Then \eqref{eq:PMK} reads
\[
P_t(h)=h+R\,\Psi h,
\qquad
M_t(h)=-h+\Psi h,
\qquad
K_t(h)=-\Psi h+h+R\,\Psi h,
\]
so these maps are linear with matrices
\[
\Mat(P_t)=I+R\,\Psi,
\qquad
\Mat(M_t)=\Psi-I,
\qquad
\Mat(K_t)=I+(R-I)\Psi .
\]
By Lemma~\ref{lem:pair-choice} the matrices $I+C(\lambda(t))X_j$,
$X_j-I$ and $I+(C(\lambda(t))-I)X_j$ are invertible, so $P_t$, $M_t$ and
$K_t$ are bijections of $H$.

By Lemma~\ref{lem:twisted-lift}, $\sigma(ht)=\psi_t(h)\varphi(t)$ is a
colouring bijection of $G$. Hence $G$ is colourable.
\end{proof}

\bigskip

\subsection[Case H = C9 x C3]{Case \texorpdfstring{$H=C_9\times C_3$}{H = C9 x C3}}

We now treat a normal subgroup $H\cong C_9\times C_3$ that admits a
decomposition
\[
H=\langle c\rangle\times\langle b\rangle,
\qquad c\in Z(G),\qquad |c|=9,\quad |b|=3 .
\tag{$\dagger$}\label{eq:dagger}
\]
This is exactly the configuration produced by the proof of the main
theorem (Subcase~2b of Theorem~\ref{thm:main}), and it is the only one we
shall need.

Assume \eqref{eq:dagger} and set $z=c^{3}$, so that $|z|=3$ and
$z\in Z(G)$.  By Lemma~\ref{lem:conjugation}, $\rho_t(c)=c$ and
$\rho_t(b)=z^{\,k(t)}b$ with $k(t)\in\{0,1,2\}$, so for
$h=c^{u}b^{j}\in H$,
\[
\rho_t(h)=c^{u}\bigl(z^{k(t)}b\bigr)^{j}=h\cdot z^{\,k(t)j} .
\]
Thus
\begin{equation}\label{eq:rho-Q}
\rho_t(h)=h\cdot p_{\lambda}(h),
\qquad \lambda=k(t),
\qquad
p_{\lambda}\bigl(c^{u}b^{j}\bigr)=z^{\,\lambda j},
\end{equation}
where $p_{\lambda}\colon H\to\langle z\rangle$ is an endomorphism of $H$.
Since $H$ is abelian, substituting \eqref{eq:rho-Q} into \eqref{eq:PMK}
gives, for any bijection $\psi_t\colon H\to H$,
\begin{equation}\label{eq:PMK-abelian}
P_t(h)=h\,\psi_t(h)\,p_{\lambda}\bigl(\psi_t(h)\bigr),
\qquad
M_t(h)=h^{-1}\psi_t(h),
\qquad
K_t(h)=h\,p_{\lambda}\bigl(\psi_t(h)\bigr) .
\end{equation}
Note that $p_{\lambda}$ is evaluated at $\psi_t(h)$, not at $h$, and that
$K_t$ arises from $P_t$ by deleting the factor $\psi_t(h)$.

\begin{lem}\label{alpha}
Let $H=\langle c\rangle\times\langle b\rangle\cong C_9\times C_3$ with
$|c|=9$, $|b|=3$, and $z=c^{3}$.  For every $\lambda\in\{0,1,2\}$ there
exists a bijection $\alpha_{\lambda}\colon H\to H$ such that the three
maps
\[
A_{\lambda}(h)=h\,\alpha_{\lambda}(h)\,p_{\lambda}\bigl(\alpha_{\lambda}(h)\bigr),
\qquad
B_{\lambda}(h)=h^{-1}\alpha_{\lambda}(h),
\qquad
C_{\lambda}(h)=h\,p_{\lambda}\bigl(\alpha_{\lambda}(h)\bigr)
\]
are bijections of $H$.
\end{lem}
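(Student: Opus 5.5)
The plan is to dispose of $\lambda=0$ by an earlier theorem, reduce $\lambda=2$ to $\lambda=1$ by an automorphism of $H$, and settle the single remaining case $\lambda=1$ by an explicit, machine-verified map.

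\emph{The case $\lambda=0$.} Here $p_0$ is trivial, so $C_0=\mathrm{id}_H$. The conditions on $A_0$ and $B_0$ say exactly that $\alpha_0\in\SCM(H)$. Since $H\cong C_9\times C_3$ is a noncyclic abelian $3$-group, Theorem~\ref{3-group-abelian} supplies such an $\alpha_0$.

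\emph{Reduction of $\lambda=2$ to $\lambda=1$.} Let $f\in\Aut(H)$ be given by $f(c)=c$ and $f(b)=b^{2}$. Then $f$ fixes $z$, so $f\circ p_\mu=p_\mu$. Moreover $p_\mu\circ f=p_{2\mu}$. Suppose $\alpha_1$ works for $\lambda=1$, and put $\alpha_2=f^{-1}\alpha_1 f$. A direct substitution gives
\[
A_2=f^{-1}\circ A_1\circ f,\qquad B_2=f^{-1}\circ B_1\circ f,\qquad C_2=f^{-1}\circ C_1\circ f .
\]
The only point to check is $p_2(\alpha_2(h))=p_{4}(\alpha_1(fh))=p_1(\alpha_1(fh))$, and this uses $4\equiv1\pmod 3$. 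Hence all three maps for $\lambda=2$ are bijections, and only $\lambda=1$ remains.

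\emph{Why the case $\lambda=1$ needs a nonlinear map.} I would first look for a structured $\alpha_1$, and it is instructive to see why automorphisms of $H$ cannot work. For any endomorphism $\alpha$, the map $N=p_1\circ\alpha$ takes values in $\langle z\rangle\subseteq 3H$. Also $p_1$ kills $3H$, so $N^2=0$ and $C_1=I+N$ is automatically bijective. However, an endomorphism of $H$ is bijective iff it is bijective on $H/3H\cong\F_3^2$. On that quotient every endomorphism of $C_9\times C_3$ is triangular, because $\bar b$ must map into $\langle\bar b\rangle$, and $p_1$ induces $0$.

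Bijectivity of $B_1=\alpha-I$ would then force both diagonal entries to equal $2$. But then $A_1=I+(I+p_1)\alpha$ reduces to $I+\bar\alpha$, which is singular. So no linear $\alpha_1$ exists, and this is the main obstacle.

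\emph{Constructing $\alpha_1$ and verifying it.} I would look for $\alpha_1$ of the form
\[
\alpha_1(h)=\beta(h)\,s\bigl(h\langle z\rangle\bigr),
\]
where $\beta$ is an endomorphism of $H$ and $s$ is a correction constant on the cosets of $\langle z\rangle$. The three conditions then decouple by the layer criterion (Lemma~\ref{layer}) applied to $\langle z\rangle\trianglelefteq H$: each of $A_1,B_1,C_1$ must permute the nine cosets and act bijectively within each coset. This reduces the problem to a small search over $\F_3^2$-data together with coset shifts. If the ansatz fails, I would fall back on an unrestricted backtracking search over the $27!$ bijections, pruning by the three injectivity constraints simultaneously.

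The resulting $\alpha_1$ is recorded as an explicit table, and its correctness is certified by checking that the four columns $\alpha_1$, $A_1$, $B_1$, $C_1$ are each permutations of $H$, exactly as was done in Lemma~\ref{lem:order27}. This is the approach the paper follows: its appendix lists the auxiliary maps found by computer search.
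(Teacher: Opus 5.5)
Your case $\lambda=0$ is the paper's argument. Your reduction of $\lambda=2$ to $\lambda=1$ via $f(c)=c$, $f(b)=b^{2}$ is correct: it needs only that $f$ fixes $\langle z\rangle$ pointwise and that $p_\mu\circ f=p_{2\mu}$, and it would let one table replace the paper's two. Your argument that no automorphism can serve as $\alpha_1$ is also right.

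The gap is that $\lambda=1$, the real content of the lemma, is never settled. You describe how you would search for $\alpha_1$ but do not exhibit one, and the structured search you propose is provably empty. For $\alpha_1(h)=\beta(h)\,s(h\langle z\rangle)$ one has $\beta(z)=\beta(c)^{3}\in\langle z\rangle$, say $\beta(z)=z^{u}$. Since $p_1$ kills $\langle z\rangle$,
\[
\alpha_1(hz')=\alpha_1(h)\,z'^{\,u},\qquad B_1(hz')=B_1(h)\,z'^{\,u-1},\qquad A_1(hz')=A_1(h)\,z'^{\,u+1}\qquad(z'\in\langle z\rangle).
\]
Each of these maps sends every coset of $\langle z\rangle$ into a single coset. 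So bijectivity inside each coset is necessary, not merely sufficient. It requires $u\not\equiv0,1,2\pmod 3$ simultaneously, which is impossible, and no choice of $s$ helps.

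The same computation rules out every $\alpha_1$ that maps $\langle z\rangle$-cosets to $\langle z\rangle$-cosets. Writing $\alpha_1(hz')=\alpha_1(h)\,g_h(z')$, each $g_h$ would have to be a strong complete mapping of $\langle z\rangle\cong C_3$, and there is none (Remark~\ref{rem:cyclic-excluded}). A valid $\alpha_1$ must therefore scramble these cosets. The one in Table~\ref{Tab:3-add} sends $\{(0,0),(3,0),(6,0)\}$ to $\{(0,0),(7,1),(8,2)\}$.

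So the only route left in your proposal is the unrestricted search, and you stop before producing its output. To complete the proof you must write down a specific $\alpha_1$ and check that $\alpha_1$, $A_1$, $B_1$, $C_1$ are permutations of $H$. Taking the $\lambda=1$ column of Table~\ref{Tab:3-add} does this. Your automorphism then produces $\alpha_2$, and the argument is the paper's with one table fewer. As written, existence for $\lambda=1$, and hence for $\lambda=2$, is asserted rather than proved.
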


\begin{proof}
Identify $H\cong\mathbb Z_9\times\mathbb Z_3$ additively via
$c^{u}b^{j}\leftrightarrow(u,j)$. Then $z=(3,0)$ and
$p_{\lambda}(u,j)=(3\lambda j,\,0)$, and the three maps read
\[
A_{\lambda}(h)=h+\alpha_{\lambda}(h)+p_{\lambda}(\alpha_{\lambda}(h)),
\qquad
B_{\lambda}(h)=-h+\alpha_{\lambda}(h),
\qquad
C_{\lambda}(h)=h+p_{\lambda}(\alpha_{\lambda}(h)).
\]

For $\lambda=0$ we have $p_{0}\equiv0$, so $C_{0}=\mathrm{id}_H$, while the
conditions on $A_{0}$ and $B_{0}$ say precisely that
$\alpha_{0}\in\SCM(H)$.  Such an $\alpha_{0}$ exists by
Theorem~\ref{3-group-abelian}, because $H$ is a noncyclic abelian
$3$-group.

For $\lambda=1,2$ the bijections $\alpha_{\lambda}$ are listed in
Table~\ref{Tab:3-add} of Appendix~\ref{app:C9C3-tables}, together with
$A_{\lambda}$, $B_{\lambda}$ and $C_{\lambda}$. Direct inspection of the
table shows that each of the eight displayed columns is a permutation of
$H$.
\end{proof}

\begin{thm}\label{thm:lift-C9xC3}
Let $G$ be a finite $3$-group and let $H\trianglelefteq G$ satisfy
\eqref{eq:dagger}, that is,
\[
H=\langle c\rangle\times\langle b\rangle\cong C_9\times C_3,
\qquad c\in Z(G),\qquad |c|=9,\quad |b|=3 .
\]
If $G/H$ is colourable, then $G$ is colourable.
\end{thm}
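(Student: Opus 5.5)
The proof will follow the pattern of Theorem~\ref{thm:lift-C3xC3}, with Lemma~\ref{alpha} taking the place of the matrix Lemma~\ref{lem:pair-choice}. If $H\le Z(G)$, we are done by Corollary~\ref{central}, because $H\cong C_9\times C_3$ is a noncyclic abelian $3$-group and hence strongly admissible by Theorem~\ref{3-group-abelian}. In general no case split is actually needed, since the case $\lambda=0$ of Lemma~\ref{alpha} already covers central behaviour.

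Fix $\Phi\in\CB(G/H)$ and take $T,\theta,\varphi,\xi,\zeta,\omega,t_{+},t_{-},t_{c}$ as in Lemma~\ref{lem:transversal-scheme}. Apply Lemma~\ref{lem:conjugation} with $c$ central of order $9$ and $b$ of order $3$, so $k=1$ and $m(t)=0$. This gives $\rho_t(c)=c$ and $\rho_t(b)=b\,c^{l(t)}$. Because $\rho_t(b)$ must have order $3$, $c^{l(t)}$ lies in $\langle z\rangle$, i.e.\ $\rho_t(b)=z^{k(t)}b$ with $k(t)\in\{0,1,2\}$. This is exactly the situation recorded in \eqref{eq:rho-Q}. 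For each $t\in T$ put $\lambda=k(t)$ and choose $\psi_t:=\alpha_{\lambda}$ from Lemma~\ref{alpha}.

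Next we check the hypotheses of Lemma~\ref{lem:twisted-lift}. By \eqref{eq:PMK-abelian}, which holds for any bijection $\psi_t$ because $H$ is abelian and $\rho_t(h)=h\,p_\lambda(h)$, we get
\[
P_t=A_{\lambda},\qquad M_t=B_{\lambda},\qquad K_t=C_{\lambda}.
\]
The point to watch is the derivation of $K_t$. Since $H$ is abelian,
\[
K_t(h)=\psi_t(h)^{-1}h\,\psi_t(h)\,p_\lambda(\psi_t(h))=h\,p_\lambda(\psi_t(h)),
\]
which is precisely $C_\lambda$. All three maps are therefore bijections by Lemma~\ref{alpha}. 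Lemma~\ref{lem:twisted-lift} then shows that $\sigma(ht)=\psi_t(h)\varphi(t)$ lies in $\CB(G)$.

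I expect the main obstacle to be the step confirming that every $\rho_t|_H$ has the form $h\mapsto h\,p_\lambda(h)$ for a single $\lambda\in\{0,1,2\}$ per $t$. The only other possibility would be $\rho_t(b)=b\,c^{l}$ with $c^{l}$ of order $9$. The order argument above rules this out, and it is this that lets the finitely many computer-found maps of Lemma~\ref{alpha} cover all $t$. Everything else is bookkeeping already done in Lemma~\ref{lem:twisted-lift}.
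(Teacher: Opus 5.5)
Your proposal is correct and follows the paper's proof essentially verbatim. You choose $\psi_t=\alpha_{k(t)}$, identify $P_t,M_t,K_t$ with $A_{\lambda},B_{\lambda},C_{\lambda}$ via \eqref{eq:PMK-abelian}, and conclude by Lemma~\ref{lem:twisted-lift}. Your order argument showing $c^{l(t)}\in\langle z\rangle$ (because $(bc^{l})^{3}=c^{3l}$ must be trivial) spells out a step the paper skips when it deduces $\rho_t(b)=z^{k(t)}b$ from Lemma~\ref{lem:conjugation}.
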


\begin{proof}
Fix $\Phi\in\CB(G/H)$ and let $T,\varphi,\xi,\zeta,\omega,
t_{+},t_{-},t_{c}$ be as in Lemma~\ref{lem:transversal-scheme}.  For
$t\in T$ put $\lambda(t)=k(t)$, as in \eqref{eq:rho-Q}, and
\[
\psi_t=\alpha_{\lambda(t)},
\]
with $\alpha_{\lambda}$ given by Lemma~\ref{alpha}.  Each $\psi_t$ is a
bijection of $H$, and by \eqref{eq:PMK-abelian}
\[
P_t=A_{\lambda(t)},
\qquad
M_t=B_{\lambda(t)},
\qquad
K_t=C_{\lambda(t)},
\]
which are bijections of $H$ by Lemma~\ref{alpha}.  The hypotheses of
Lemma~\ref{lem:twisted-lift} are therefore satisfied, and
$\sigma(ht)=\psi_t(h)\varphi(t)$ is a colouring bijection of $G$.
\end{proof}

\section{Main theorem}\label{sec:Main}

The aim of this section is to prove the main theorem. A crucial role is
played by the following structural result describing $3$-groups with a
cyclic maximal subgroup.

\begin{thm}[{cf.~\cite[Theorem~5.3.4]{R}}]\label{max:cyclic}
Let $G$ be a $3$-group of order $3^r$, $r\ge3$, having a cyclic maximal
subgroup.  Then
\[
G\cong C_{3^{r}},
\qquad
G\cong C_{3^{r-1}}\times C_3
\qquad\text{or}\qquad
G\cong M_{3^{r}} .
\]
In particular, if $G$ is noncyclic, then $G\cong C_{3^{r-1}}\times C_3$ or
$G\cong M_{3^{r}}$.
\end{thm}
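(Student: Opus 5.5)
The statement is the classical classification of $p$-groups with a cyclic maximal subgroup, specialised to $p=3$. I would follow the standard argument.

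Let $A=\langle a\rangle$ be a cyclic maximal subgroup of order $3^{r-1}$. Then $A\trianglelefteq G$, since maximal subgroups of $p$-groups are normal, and $|G:A|=3$. If $G$ is cyclic there is nothing to prove, so assume otherwise and pick $b\in G\setminus A$. Then $G=\langle a,b\rangle$ and $b^{3}\in A$.

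\emph{Action of $b$ on $A$.} Conjugation by $b$ induces an automorphism of $A$ whose order divides $3$. Write $bab^{-1}=a^{s}$. The Sylow $3$-subgroup of $(\mathbb Z/3^{r-1}\mathbb Z)^{\times}$ is cyclic of order $3^{r-2}$, generated by $1+3$. Its unique subgroup of order $3$ is generated by $1+3^{r-2}$. Hence either $s\equiv1$ and $G$ is abelian, or, replacing $b$ by $b^{-1}$ if necessary, $s\equiv 1+3^{r-2}\pmod{3^{r-1}}$. This is where $r\ge3$ and oddness of $3$ enter: for $p=2$ there would be further involutive automorphisms.

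\emph{Abelian case.} $G$ is abelian of order $3^r$ with a cyclic subgroup of index $3$ and is noncyclic. By the structure theorem it is $C_{3^{r-1}}\times C_3$.

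\emph{Nonabelian case.} I need to adjust $b$ so that $b^3=e$. Since $b^3\in A$ commutes with $b$, the element $b^3=a^{m}$ is fixed by conjugation: $a^{m s}=a^m$, so $3^{r-2}m\equiv0$ and $3\mid m$. Thus $b^3=a^{3n}$.

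Now compute $(a^{j}b)^3$. Using $ba^{j}b^{-1}=a^{js}$, one gets
\[
(a^jb)^3=a^{j(1+s+s^2)}b^3 .
\]
Since $s=1+3^{r-2}$, we have $1+s+s^2\equiv 3+3\cdot3^{r-2}\equiv 3\pmod{3^{r-1}}$ when $r\ge3$; here $2(r-2)\ge r-1$ is needed, which holds for $r\ge3$. So $(a^jb)^3=a^{3j+3n}$, and choosing $j=-n$ gives an element $b'=a^{-n}b\notin A$ with $b'^3=e$. The element $b'$ still acts by $s$, since $a^{-n}$ commutes with $a$.

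Therefore $G=\langle a,b'\mid a^{3^{r-1}}=b'^3=e,\ b'ab'^{-1}=a^{1+3^{r-2}}\rangle$, which has order $3^r$ because $A\cap\langle b'\rangle=e$ and $G=A\rtimes\langle b'\rangle$. This is $M_{3^r}$. For $r=3$ this is the group $M_{27}$ of the paper, with $4=1+3$.

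The main obstacle is the verification $1+s+s^2\equiv3\pmod{3^{r-1}}$, and with it the splitting of the extension. This is the only place where the arithmetic of the prime $3$ and the hypothesis $r\ge3$ are genuinely used. The final "in particular" statement is immediate: it simply discards the cyclic alternative.
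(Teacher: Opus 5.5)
Your proof is correct. It is the standard argument for classifying $p$-groups with a cyclic maximal subgroup, which the paper does not reprove and only cites from Robinson's Theorem~5.3.4. Your key steps are: the action of $b$ is the order-$3$ automorphism $a\mapsto a^{1+3^{r-2}}$ (after replacing $b$ by $b^{-1}$ if needed), and the congruence $1+s+s^2\equiv 3\pmod{3^{r-1}}$ for $r\ge3$ makes $a^{-n}b$ an element of order $3$ outside $\langle a\rangle$, so the extension splits. These are exactly the ingredients of that classical proof.
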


The following elementary properties of the groups $M_{3^{k}}$ will be used
repeatedly in the proof of the main theorem.  Throughout, $\Omega_1(G)$
denotes the subgroup $\langle x\in G\mid x^{3}=e\rangle$.

\begin{lem}\label{lem:centre-L}
Let $k\ge3$ and
$M_{3^{k}}=\langle a,b\mid a^{3^{k-1}}=b^{3}=e,\ bab^{-1}=a^{1+3^{k-2}}\rangle$.
Then
\[
Z(M_{3^{k}})=\langle a^{3}\rangle\cong C_{3^{\,k-2}} .
\]
In particular $Z(M_{3^{k}})$ is cyclic, its unique subgroup of order $3$ is
\[
\Omega_1\bigl(Z(M_{3^{k}})\bigr)=\langle a^{3^{\,k-2}}\rangle ,
\]
and
\[
M_{3^{k}}\big/\Omega_1\bigl(Z(M_{3^{k}})\bigr)\cong C_{3^{\,k-2}}\times C_3 .
\]
Moreover
\[
\Omega_1\bigl(M_{3^{k}}\bigr)=\langle a^{3^{\,k-2}}\rangle\times\langle b\rangle
\cong C_3\times C_3 .
\]
\end{lem}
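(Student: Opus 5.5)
The plan is a direct computation from the defining relation, organised around the power formula in $M_{3^{k}}$. Write $n=3^{k-2}$, so $bab^{-1}=a^{1+n}$ and $|a|=3n$. Since $(1+n)^{3}\equiv1\pmod{3n}$ for $k\ge3$, conjugation by $b$ is an automorphism of order dividing $3$ of $\langle a\rangle$, and every element has the unique normal form $a^{u}b^{j}$ with $u\in\mathbb Z/3n$ and $j\in\{0,1,2\}$.

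\emph{Centre.} An element $a^{u}b^{j}$ commutes with $a$ iff $b^{j}ab^{-j}=a^{(1+n)^{j}}=a$. We have $(1+n)^{j}\equiv1+jn\pmod{3n}$, because $n^{2}\equiv0\pmod{3n}$ when $k\ge3$. So the condition is $jn\equiv0\pmod{3n}$, i.e.\ $j=0$. The element $a^{u}$ commutes with $b$ iff $a^{u(1+n)}=a^{u}$, i.e.\ $un\equiv0\pmod{3n}$, i.e.\ $3\mid u$. Hence
\[
Z(M_{3^{k}})=\langle a^{3}\rangle,
\]
which is cyclic of order $n=3^{k-2}$. Its unique subgroup of order $3$ is $\langle a^{3^{k-2}}\rangle=\langle a^{n}\rangle$.

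\emph{Quotient.} Put $N=\langle a^{n}\rangle$. In $M/N$ we have $[b,a]=a^{n}\in N$, so $M/N$ is abelian and generated by $\bar a$ and $\bar b$. Here $|\bar a|=n$ and $\bar b\notin\langle\bar a\rangle$, since $b\notin\langle a\rangle$. The quotient has order $3n$, so
\[
M/N=\langle\bar a\rangle\times\langle\bar b\rangle\cong C_{3^{k-2}}\times C_3 .
\]

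\emph{$\Omega_1$.} This is the only step needing care. I expect to prove the power formula
\[
(a^{u}b^{j})^{3}=a^{\,u(1+(1+n)^{j}+(1+n)^{2j})}\,b^{3j}.
\]
For $j\ne0$ the exponent sum is $3+3jn+(\text{terms divisible by }n^{2})\equiv3\pmod{3n}$, using $3n\mid n^{2}$ for $k\ge3$ and $3\mid 3jn$. So $(a^{u}b^{j})^{3}=a^{3u}$. For $j=0$ we get $a^{3u}$ trivially. Hence $x^{3}=e$ iff $a^{3u}=e$ iff $n\mid u$. The elements of order dividing $3$ are therefore exactly $a^{un}b^{j}$, which form the subgroup $\langle a^{n}\rangle\times\langle b\rangle$. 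This is a direct product because $a^{n}$ is central and $\langle a^{n}\rangle\cap\langle b\rangle=1$, and it is $\cong C_3\times C_3$.

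The main obstacle is the exponent bookkeeping in the cube formula, in particular checking that $n^{2}\equiv0\pmod{3n}$ needs $k\ge3$. For $k=3$ this reads $n=3$ and $9\equiv0\pmod 9$, which holds, so the case $M_{27}$ is included.
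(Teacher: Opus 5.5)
Your proof is correct and follows the paper's argument for the centre and for the quotient $M_{3^{k}}/\langle a^{3^{k-2}}\rangle$. For $\Omega_1$ you compute $(a^{u}b^{j})^{3}=a^{3u}$ directly from the normal form, whereas the paper notes that $M_{3^{k}}$ has class $2$ with derived subgroup of order $3$. In the paper, cubing is therefore an endomorphism with image $\langle a^{3}\rangle$, and its kernel has order $9$; both arguments establish the same fact, and yours also lists the elements of order dividing $3$ explicitly.
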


\begin{proof}
Every element of $M_{3^{k}}$ is uniquely of the form $a^{t}b^{s}$ with
$0\le t<3^{k-1}$, $0\le s<3$, and
$b^{s}ab^{-s}=a^{(1+3^{k-2})^{s}}$.  Since
$(1+3^{k-2})^{s}\equiv1+s\,3^{k-2}\pmod{3^{k-1}}$, an element $a^{t}b^{s}$
commutes with $a$ if and only if $3\mid s$, i.e.\ $s=0$. Similarly, $a^{t}$
commutes with $b$ if and only if $t\,3^{k-2}\equiv0\pmod{3^{k-1}}$, i.e.\
$3\mid t$.  Hence $Z(M_{3^{k}})=\langle a^{3}\rangle$, a cyclic group of order
$3^{\,k-2}$, whose unique subgroup of order $3$ is
$\langle a^{3^{\,k-2}}\rangle$.

Put $Q=M_{3^{k}}/\langle a^{3^{\,k-2}}\rangle$, a group of order $3^{\,k-1}$.
In $Q$ the defining relation becomes $\bar b\bar a\bar b^{-1}=\bar a$, so
$Q$ is abelian. Moreover $|\bar a|=3^{\,k-2}$ and $|\bar b|$ divides $3$,
whence $|Q|\le3^{\,k-1}$ forces $|\bar b|=3$ and
$\langle\bar a\rangle\cap\langle\bar b\rangle=\{e\}$.  Therefore
$Q\cong C_{3^{\,k-2}}\times C_3$.

Finally, the defining relation gives $bab^{-1}a^{-1}=a^{3^{\,k-2}}$, an
element of $\langle a^{3}\rangle=Z(M_{3^{k}})$.  As $M_{3^{k}}=\langle
a,b\rangle$ and the commutator of the two generators is central, the derived
subgroup of $M_{3^{k}}$ equals $\langle a^{3^{\,k-2}}\rangle$, of order $3$.
In particular $M_{3^{k}}$ has class $2$, so
$(xy)^{3}=x^{3}y^{3}[y,x]^{3}=x^{3}y^{3}$ and cubing is an endomorphism of
$M_{3^{k}}$.  Its image is generated by $a^{3}$ and
$b^{3}=e$, hence equals $\langle a^{3}\rangle$, of order $3^{\,k-2}$.
Consequently its kernel $\Omega_1(M_{3^{k}})$ has order $9$.  As
$a^{3^{\,k-2}}$ and $b$ are commuting elements of order $3$ generating a
subgroup of order $9$, we get
$\Omega_1(M_{3^{k}})=\langle a^{3^{\,k-2}}\rangle\times\langle b\rangle
\cong C_3\times C_3$.
\end{proof}

The following auxiliary lemma will be used later.  We first isolate the
consequence of Kolchin's theorem on which its proof rests.  Let a $p$-group
$P$ act by automorphisms on an elementary abelian $p$-group $V\neq\{e\}$.
Viewing $V$ as a vector space over $\F_p$ and $P$ as a subgroup of
$\GL(V)$, every $g\in P$ is unipotent: if $g$ has order $p^{m}$, then
$(g-1)^{p^{m}}=g^{p^{m}}-1=0$, since $\operatorname{End}(V)$ has
characteristic $p$.  By Kolchin's theorem \cite{K}, $P$ therefore
stabilises a complete flag
\begin{equation}\label{eq:flag}
\{e\}=V_0<V_1<\cdots<V_n=V,
\qquad
|V_i|=p^{\,i} .
\end{equation}

\begin{lem}\label{lem:normal-pp}
Let $G$ be a noncyclic $3$-group.  Then $G$ contains a normal subgroup
isomorphic to $C_3\times C_3$.
\end{lem}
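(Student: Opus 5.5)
The plan is the classical argument via $\Omega_1$ of a suitable normal subgroup, using the Kolchin flag \eqref{eq:flag} to find normal subgroups inside elementary abelian normal subgroups. First, I reduce to showing that $G$ has a normal elementary abelian subgroup of order at least $9$. Indeed, suppose $V\trianglelefteq G$ is elementary abelian with $|V|\ge 9$. Then $G$ acts on $V$ by conjugation, and by Kolchin's theorem it stabilises a complete flag $V_0<V_1<V_2<\cdots$. Each $V_i$ is $G$-invariant, hence normal in $G$. So $V_2$ is a normal subgroup of order $9$ and exponent $3$, that is, $V_2\cong C_3\times C_3$.

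To produce $V$, I take a maximal normal abelian subgroup $A$ of $G$. It is standard that $A=C_G(A)$: if $C_G(A)\ne A$, the normal subgroup $C_G(A)/A$ of $G/A$ meets $Z(G/A)$ nontrivially, and the preimage of an element of order $3$ there gives a larger normal abelian subgroup. Then I put $V=\Omega_1(A)$, which is characteristic in $A$ and hence normal in $G$, and elementary abelian.

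\emph{Case $A$ noncyclic.} Then $\Omega_1(A)$ has order at least $9$, and the reduction applies.

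\emph{Case $A$ cyclic.} Then $G/A=G/C_G(A)$ embeds in $\Aut(A)$, which is cyclic since $3$ is odd. I expect this case to be the main obstacle. I would handle it with the list of $3$-groups having a cyclic normal subgroup with cyclic quotient, which are metacyclic. More concretely, I would argue directly as follows.
\begin{itemize}
\item If $G=A$, then $G$ is cyclic, which is excluded.
\item Otherwise pick $x\in G\setminus A$ with $x^3\in A$ and $\langle x\rangle A/A$ of order $3$, chosen in the unique subgroup of order $3$ of the cyclic group $G/A$.
\item Let $B=\langle A,x\rangle\trianglelefteq G$. This is a group with a cyclic maximal subgroup $A$, noncyclic because $A=C_G(A)$ is maximal abelian.
\item By Theorem~\ref{max:cyclic}, $B\cong C_{3^{s-1}}\times C_3$ or $B\cong M_{3^{s}}$; for $|B|=9$, $B\cong C_3\times C_3$ directly.
\item In every case $\Omega_1(B)\cong C_3\times C_3$, by Lemma~\ref{lem:centre-L} in the modular case, with the $H_3$-type case at order $27$ excluded because $B$ has a cyclic maximal subgroup of order $9$, so $B\cong M_{27}$ or $C_9\times C_3$.
\item Since $\Omega_1(B)$ is characteristic in $B\trianglelefteq G$, it is normal in $G$, which finishes this case.
\end{itemize}

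The point to check carefully is that $B\ne A$ is noncyclic. This holds because a cyclic $B$ would be abelian and properly contain $A$, contradicting $C_G(A)=A$. I should also confirm that Theorem~\ref{max:cyclic} requires order at least $27$, so the case $|B|=9$ is handled separately, as above.
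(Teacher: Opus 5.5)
Your proof is correct and is essentially the paper's argument. It takes a maximal abelian normal subgroup $A$ with $C_G(A)=A$, applies the Kolchin flag to $\Omega_1(A)$ when $A$ is noncyclic, and when $A$ is cyclic uses $\Omega_1$ of a normal overgroup $B$ with $|B:A|=3$, via Theorem~\ref{max:cyclic} and Lemma~\ref{lem:centre-L}. The differences are cosmetic. The paper gets normality of $B$ from an order-$3$ subgroup of $Z(G/A)$ rather than from cyclicity of $G/A\hookrightarrow\Aut(A)$. It also observes that $B$ is nonabelian, which your own argument already shows since $x\notin C_G(A)$, so $B\cong M_{3^r}$ directly and your abelian subcases, including $|B|=9$, never occur.
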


\begin{proof}
Let $A$ be an abelian normal subgroup of $G$ of largest possible order.

We claim that $C_G(A)=A$.  Since $A$ is abelian, $A\le C:=C_G(A)$, and
$C\trianglelefteq G$ because $A\trianglelefteq G$.  If $A<C$, then $C/A$ is a
nontrivial normal subgroup of the $3$-group $G/A$ and therefore meets
$Z(G/A)$ nontrivially. Choose $x\in C\setminus A$ with $xA\in Z(G/A)$.  Then
$B=\langle A,x\rangle$ is abelian, because $x$ centralises $A$, and
$B\trianglelefteq G$, because $A\trianglelefteq G$ and $x^{g}\in xA\subseteq B$
for every $g\in G$.  As $|B|>|A|$, this contradicts the choice of $A$.

Assume first that $A$ is noncyclic.  Then $\Omega_1(A)$ is elementary
abelian of rank at least $2$ and characteristic in $A$, hence normal in
$G$.  Applying \eqref{eq:flag} to the conjugation action of $G$ on
$\Omega_1(A)$, we obtain a $G$-invariant subgroup $V_2$ of order $9$. It is
normal in $G$ and elementary abelian, so $V_2\cong C_3\times C_3$.

Assume now that $A$ is cyclic.  Then $A\neq G$, since $G$ is noncyclic, so
$G/A$ is a nontrivial $3$-group and $Z(G/A)$ contains a subgroup of order
$3$. Let $H$ be its preimage in $G$, so that $A\le H\trianglelefteq G$ and
$|H:A|=3$.  If $H$ were abelian, then $H\le C_G(A)=A$, which is false.
Hence $H$ is nonabelian and consequently $|H|=3^{r}$ with $r\ge3$.  Thus
$H$ is a nonabelian $3$-group of order $3^{r}$, $r\ge3$, with cyclic
maximal subgroup $A$, so $H\cong M_{3^{r}}$ by Theorem~\ref{max:cyclic}.
By Lemma~\ref{lem:centre-L} the subgroup $\Omega_1(H)\cong C_3\times C_3$ is
characteristic in $H$, and $H\trianglelefteq G$. Hence $\Omega_1(H)$ is normal
in $G$.
\end{proof}

\begin{lem}\label{lem:normal-C3xC3}
Let $G$ be a $3$-group of order $3^{r}>27$, all of whose maximal subgroups
are noncyclic.  Then $G$ contains a normal subgroup $H\cong C_3\times C_3$
such that $G/H$ is not cyclic.
\end{lem}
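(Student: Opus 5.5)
The plan is to start from the normal subgroup $V\cong C_3\times C_3$ supplied by Lemma~\ref{lem:normal-pp}. Note that $G$ is noncyclic, since a cyclic group has a cyclic maximal subgroup. If $G/V$ is already noncyclic, take $H=V$ and stop. So assume $G/V$ is cyclic, of order $|G|/9\ge 9$. Then I will use the hypothesis on maximal subgroups to pin down the structure of $G$, and build a different $H$.

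\emph{Step 1: a cyclic complement.} Choose $x\in G$ with $xV$ generating $G/V$. Then $G=\langle x\rangle V$. Since $V$ has exponent $3$, the intersection $\langle x\rangle\cap V$ has order $1$ or $3$, so $|\langle x\rangle|=|G|/9$ or $|G|/3$. In the second case $\langle x\rangle$ would be a cyclic maximal subgroup, which is excluded. Hence $\langle x\rangle\cap V=\{e\}$, $G=V\rtimes\langle x\rangle$, and $|x|=|G|/9\ge 9$ because $|G|\ge 81$.

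\emph{Step 2: central elements.} The image of $G$ in $\Aut(V)\cong\GL(2,\F_3)$ is a $3$-group. The Sylow $3$-subgroups of $\GL(2,\F_3)$ have order $3$, so $x^3$ centralises $V$. It also commutes with $x$, so $x^3\in Z(G)$. Let $w$ be the element of order $3$ in $\langle x^3\rangle$; this is possible because $|x^3|=|x|/3\ge3$, and then $w\in Z(G)\setminus V$. On the other hand, $V$ is a nontrivial normal subgroup of a $3$-group, so $V\cap Z(G)$ contains some $z$ of order $3$. Put
\[
H=\langle w\rangle\times\langle z\rangle\cong C_3\times C_3 .
\]
This subgroup is central, hence normal. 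It is elementary abelian of rank $2$ because $w\notin V\ni z$.

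\emph{Step 3: the quotient is noncyclic.} Choose $b\in V\setminus\langle z\rangle$, so that $V=\langle z,b\rangle$ and $G/H$ is generated by $xH$ and $bH$. The commutator $[x,b]$ lies in $V$, and modulo $\langle z\rangle$ it is trivial. The reason is that conjugation by $x$ acts unipotently on $V$ and fixes $z$, so it acts trivially on $V/\langle z\rangle$; if $x$ centralises $V$ this is immediate. Hence $G/H$ is abelian. Its order is $|G|/9=|x|$. Since $\langle x\rangle\cap V=\{e\}$, we get $\langle x\rangle\cap H=\langle w\rangle$, so $xH$ has order $|x|/3$, while $bH$ has order $3$. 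An abelian group of order $3n$ generated by an element of order $n$ and an element of order $3$ is $C_n\times C_3$. With $n=|x|/3\ge3$ this group is noncyclic, which is what the statement asks for.

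The step I expect to need the most care is Step 3. One has to check the counts, namely $|\langle x\rangle\cap H|=3$ and the identification of $G/H$ as $C_{|x|/3}\times C_3$, and it is exactly here that the hypothesis $|G|>27$ enters, through $|x|\ge9$. The remaining steps are routine once Lemma~\ref{lem:normal-pp} and the non-cyclicity of the maximal subgroups are used to force $\langle x\rangle\cap V=\{e\}$.
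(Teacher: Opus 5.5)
Your proof is correct and follows the paper's argument: you take the split decomposition $G=V\rtimes\langle x\rangle$ forced by the absence of cyclic maximal subgroups, and you replace $V$ by the central subgroup $\langle z\rangle\times\langle w\rangle$, where $z\in V\cap Z(G)$ and $w$ is the element of order $3$ in $\langle x^{3}\rangle$. The only difference is in the last step: you show $G/H$ is abelian and identify it as $C_{|x|/3}\times C_3$, whereas the paper exhibits two distinct subgroups of order $3$ in $G/H^{*}$; both are fine.
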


\begin{proof}
Note first that $r\ge4$ and that $G$ is noncyclic, since a cyclic group
has a cyclic maximal subgroup.  By Lemma~\ref{lem:normal-pp} there is a
normal subgroup $H\trianglelefteq G$ with $H\cong C_3\times C_3$.  If
$G/H$ is noncyclic, we are done, so assume from now on that
\[
G/H \text{ is cyclic.}
\]

Choose $x\in G$ such that $xH$ generates $G/H$. Then $G=\langle x\rangle H$
and
\[
3^{r}=|G|=\frac{|\langle x\rangle|\cdot|H|}{|\langle x\rangle\cap H|}
       =\frac{9\,|\langle x\rangle|}{|\langle x\rangle\cap H|} .
\]
Now $|\langle x\rangle\cap H|=9$ is impossible, since $H$ is not cyclic.  Next,
$|\langle x\rangle\cap H|=3$ would give $|\langle x\rangle|=3^{r-1}$,
i.e.\ a cyclic maximal subgroup of $G$, contrary to the hypothesis.
Hence
\begin{equation}\label{eq:split}
\langle x\rangle\cap H=\{e\},
\qquad
|\langle x\rangle|=3^{\,r-2}\ge 9,
\qquad
G=H\rtimes\langle x\rangle .
\end{equation}

Conjugation by $x$ induces on $H\cong\F_3^{2}$ an automorphism of
$3$-power order, hence a unipotent element $U\in\GL(2,\F_3)$.
Consequently:
\begin{enumerate}
\item[(i)] $U$ has a nonzero fixed vector, so there is $z\in H$ of order
$3$ with $xzx^{-1}=z$;
\item[(ii)] $U^{3}=I$, because $U=I+N$ with $N^{2}=0$ and
$\operatorname{char}\F_3=3$. Hence $x^{3}\in C_G(H)$.
\end{enumerate}

Put
\[
w=x^{\,3^{\,r-3}} .
\]
Since $r\ge4$ we have $w\in\langle x^{3}\rangle\le C_G(H)$, and
$|w|=3$ because $|x|=3^{\,r-2}$ by \eqref{eq:split}.

The element $z$ is centralised by $H$ (which is abelian) and by $x$, and
$G=H\langle x\rangle$, so $z\in Z(G)$.  Likewise $w$ centralises $H$ and
$\langle x\rangle$, so $w\in Z(G)$.  Moreover
$\langle z\rangle\cap\langle w\rangle\le H\cap\langle x\rangle=\{e\}$ by
\eqref{eq:split}.  Therefore
\[
H^{*}:=\langle z\rangle\times\langle w\rangle\cong C_3\times C_3,
\qquad H^{*}\le Z(G),
\]
and in particular $H^{*}\trianglelefteq G$.

It remains to show that $G/H^{*}$ is not cyclic.  If $z^{a}w^{b}\in H$,
then $w^{b}\in H\cap\langle x\rangle=\{e\}$, so $H\cap H^{*}=\langle z\rangle$ and
symmetrically $\langle x\rangle\cap H^{*}=\langle w\rangle$.  Hence
\[
HH^{*}/H^{*}\cong H/\langle z\rangle\cong C_3,
\qquad
\langle x\rangle H^{*}/H^{*}
\cong\langle x\rangle/\langle w\rangle\cong C_{3^{\,r-3}} .
\]
Since $G=H\langle x\rangle$, the group $G/H^{*}$ is the product of these
two subgroups, and
\[
\bigl|G/H^{*}\bigr|=3^{\,r-2}=3\cdot 3^{\,r-3},
\]
so they intersect trivially.  Because $r\ge4$, the subgroup
$\langle x\rangle H^{*}/H^{*}\cong C_{3^{\,r-3}}$ is nontrivial and
therefore contains a subgroup of order $3$, which is distinct from
$HH^{*}/H^{*}$.  Thus $G/H^{*}$ has two distinct subgroups of order $3$
and is not cyclic.  Taking $H^{*}$ in place of $H$ completes the proof.
\end{proof}

\begin{rem}\label{rem:maxcyclic-sharp}
The hypothesis on maximal subgroups cannot be omitted, and for both
exceptional families the conclusion genuinely fails.

For $G=C_{3^{\,r-1}}\times C_3$ ($r\ge3$) the elements of order dividing
$3$ form the subgroup $\Omega_1(G)\cong C_3\times C_3$. Hence this is the
only subgroup of $G$ isomorphic to $C_3\times C_3$, and
$G/\Omega_1(G)\cong C_{3^{\,r-2}}$ is cyclic.

For $G=M_{3^{r}}$ ($r\ge3$) we have
$\Omega_1(G)=\langle a^{3^{\,r-2}}\rangle\times\langle b\rangle\cong C_3\times C_3$
by Lemma~\ref{lem:centre-L}.
Every subgroup isomorphic to $C_3\times C_3$ consists of elements of
order dividing $3$ and therefore equals $\Omega_1(M_{3^{r}})$, while
$M_{3^{r}}/\Omega_1(M_{3^{r}})$ is generated by the image of $a$ and is thus cyclic
of order $3^{\,r-2}$.

By Theorem~\ref{max:cyclic} these two families are, apart from the cyclic
groups, exactly the $3$-groups possessing a cyclic maximal subgroup.
\end{rem}

\medskip

We now prove the main theorem.

\begin{thm}\label{thm:main}
Every noncyclic $3$-group $G$ not isomorphic to $M_{3^{r}}$ for $r\ge4$ is
colourable.
\end{thm}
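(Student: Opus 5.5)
The plan is induction on $|G|=3^{r}$. For $r\le3$ everything is already settled. A noncyclic abelian group is colourable by Theorem~\ref{3-group-abelian}. The two nonabelian groups of order $27$ are colourable by Lemma~\ref{lem:order27}. For $r\ge4$ I split according to whether $G$ has a cyclic maximal subgroup. If it does, Theorem~\ref{max:cyclic} gives $G\cong C_{3^{r-1}}\times C_3$ or $G\cong M_{3^r}$. The first is abelian and noncyclic, hence colourable by Theorem~\ref{3-group-abelian}, and the second is excluded by hypothesis. So I may assume that all maximal subgroups of $G$ are noncyclic.

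In that case Lemma~\ref{lem:normal-C3xC3} supplies $H\trianglelefteq G$ with $H\cong C_3\times C_3$ and $G/H$ noncyclic. \emph{Subcase 1:} $G/H\not\cong M_{3^{s}}$ for every $s\ge4$. Then $G/H$ is colourable by the induction hypothesis. Theorem~\ref{thm:lift-C3xC3} lifts a colouring bijection of $G/H$ to one of $G$, and this subcase is done.

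\emph{Subcase 2:} $G/H\cong M_{3^{s}}$ with $s=r-2\ge4$. This is where the real work lies, and I expect it to be the main obstacle. The first attempt is to change the normal subgroup. In the proof of Lemma~\ref{lem:normal-C3xC3}, the subgroup finally produced was often central. More generally, I would look among the normal subgroups of $G$ isomorphic to $C_3\times C_3$ for one whose quotient is not modular; for instance, try $\langle z\rangle\times\langle w\rangle$ with $z$ and $w$ central of order $3$ chosen differently. Where that works, Subcase~1 applies.

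When every choice leads to a modular quotient, I would use the preimages from the structure of $\overline G=G/H\cong M_{3^s}$ described in Lemma~\ref{lem:centre-L}. $Z(\overline G)$ is cyclic and $\overline G/\Omega_1(Z(\overline G))\cong C_{3^{s-2}}\times C_3$, which is abelian and noncyclic. Let $K$ be the preimage of $\Omega_1(Z(\overline G))$, a normal subgroup of order $27$ containing $H$. More usefully, I would work with preimages of small subgroups of $\langle \bar a^{3}\rangle=Z(\overline G)$ together with central elements of $G$ lying in $H$. The aim is a normal subgroup $H'=\langle c\rangle\times\langle b\rangle\cong C_9\times C_3$ with $c\in Z(G)$ of order $9$, that is, configuration \eqref{eq:dagger}. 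I would choose $c$ so that $c^3$ is a central element of $H$, and $b$ of order $3$ in $H$. The quotient $G/H'$ should then be a quotient of $C_{3^{s-2}}\times C_3$ by a cyclic subgroup, or of order at most $27$ and noncyclic. Either way it is colourable, by Theorem~\ref{3-group-abelian}, Lemma~\ref{lem:order27} or induction. Theorem~\ref{thm:lift-C9xC3} then finishes the proof.

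The delicate verifications in Subcase~2 are the following. First, a central element of order $9$ really exists: I would argue that $\bar a^{3}$ lifts to a central element whose cube is a central element of $H$, using that conjugation on $H$ is unipotent and $x^3$ centralises $H$. Second, $H'$ has the required direct decomposition. Third, $G/H'$ is noncyclic and not itself modular of order at least $81$. If the central order-$9$ element fails to exist, the fallback is to show that some other normal $C_3\times C_3$ then has a non-modular quotient, returning to Subcase~1. Finally, the consequence $\chi(\mathscr{G}_3(G))=|G|$ follows from Proposition~\ref{chromatic}.
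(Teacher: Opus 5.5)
Your outline matches the paper's architecture up to the modular case $G/H\cong M_{3^{s}}$, $s=r-2\ge4$. Your target there is also the right one: the preimage $K$ of $\Omega_1(Z(G/H))$, with $G/K\cong C_{3^{s-2}}\times C_3$, fed into Theorem~\ref{thm:lift-C9xC3}. But that case is the whole difficulty, and it is announced rather than proved. The justification you sketch also misses the decisive point.

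Write $H=\langle z\rangle\times\langle b\rangle$ with $z\in Z(G)$, let $x,y$ lift the standard generators of $G/H$, and put $c=x^{3^{r-4}}$. This is a lift of the generator of $\Omega_1(Z(G/H))$, not of $\bar a^{3}$. For $s\ge4$ the cube of a lift of $\bar a^{3}$ lies over $\bar a^{9}\neq e$, so it is not in $H$.

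Your unipotence remark gives $tbt^{-1}=z^{k(t)}b$, hence $[b,t^{3}]=1$ for all $t$. So $c$, being a cube since $r\ge6$, centralises $H$, and it clearly commutes with $x$. Nothing in your sketch shows $[c,y]=1$, and that is exactly what centrality requires. The paper obtains it in three steps:
\begin{itemize}
\item it lifts $\bar y\bar x\bar y^{-1}=\bar c\,\bar x$ to $yxy^{-1}=z^{i_0}b^{j_0}cx$;
\item it cubes, using $[b,c]=1$ and $x^{\nu}bx^{-\nu}=z^{\nu k(x)}b$, to get $yx^{3}y^{-1}=c^{3}x^{3}$;
\item it raises this to the power $3^{r-5}$, giving $ycy^{-1}=c^{3^{r-4}}x^{3^{r-4}}=c$, since $c^{9}=e$ and $3^{r-4}\ge9$.
\end{itemize}
The same identity shows that $x^{3}$ is central only when $c^{3}=e$.

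You also give no argument for the case $|c|=3$, which does occur. There $K=\langle c\rangle\times H\cong C_3^{3}$ need not be central and is not of type \eqref{eq:dagger}. So neither lifting theorem applies, and your ``fallback'' to another normal $C_3\times C_3$ has no argument behind it. The paper's Case~1 supplies one:
\begin{itemize}
\item $S=\langle c,z\rangle\le Z(G)$ is isomorphic to $C_3\times C_3$.
\item Since now $yx^{3}y^{-1}=x^{3}$, the element $d=x^{3^{r-5}}$ is central.
\item $\langle bS\rangle$ and $\langle dS\rangle$ are distinct subgroups of order $3$ in $Z(G/S)$, because $dH$ has order $9$ in $G/H$.
\item Every $M_{3^{k}}$ has cyclic centre, so $G/S$ is not modular. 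Hence it is colourable by induction, and Corollary~\ref{central} lifts this to $G$.
\end{itemize}

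For $|c|=9$ your uniform plan does work once centrality is known. Choose $b'\in H\setminus\langle c^{3}\rangle$. Then $\langle c\rangle\cap\langle b'\rangle=\{e\}$, and $\langle c,b'\rangle=\langle c\rangle H$ is normal, satisfies \eqref{eq:dagger}, and has quotient $C_{3^{s-2}}\times C_3$. This even spares the paper's split into Subcases~2a and~2b. But without the centrality computation and the $|c|=3$ argument, the modular case, and hence the theorem, remains unproved.
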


\begin{proof}
Suppose that the theorem is false, and let $G$ be a counterexample of minimal
order. Write $|G|=3^r$.  Every noncyclic $3$-group of order at most $27$
is colourable: for the abelian ones this is
Theorem~\ref{3-group-abelian}, and the two nonabelian groups of order
$27$ are covered by Lemma~\ref{lem:order27}.  Hence $r\ge4$.

Assume first that $G$ has a cyclic maximal subgroup. Since $G$ is
noncyclic, Theorem~\ref{max:cyclic} gives
\[
G\cong C_{3^{r-1}}\times C_3
\quad\text{or}\quad
G\cong M_{3^{r}} .
\]
In the first case, $G$ is colourable by
Theorem~\ref{3-group-abelian}, while the second contradicts the
hypothesis of the theorem (recall that $r\ge4$). Hence $G$ has no cyclic
maximal subgroup.

By Lemma~\ref{lem:normal-C3xC3}, the group $G$ contains a normal subgroup $N\cong C_3\times C_3$ such that $G/N$ is not cyclic.

The quotient $G/N$ is noncyclic of order $3^{\,r-2}<|G|$, so by the
minimality of $G$ it is either colourable or isomorphic to $M_{3^{k}}$ for some
$k\ge4$. Comparing orders, the only possibility is $k=r-2$.  If $G/N$ is
colourable, then Theorem~\ref{thm:lift-C3xC3} implies that $G$ is
colourable, a contradiction.  Hence
\[
G/N\cong M_{3^{r-2}},\qquad r-2\ge4,
\]
so that $r\ge6$.  (Equivalently: for $r\le5$ the group $G/N$ would be
noncyclic of order at most $27$, hence colourable by the first
paragraph.)

Since $N\trianglelefteq G$ is nontrivial and $G$ is a $3$-group, we have
$N\cap Z(G)\neq\{e\}$.  Choose $z\in N\cap Z(G)$ of order $3$ and
$b\in N\setminus\langle z\rangle$. As $N\cong C_3\times C_3$ has exponent
$3$, this gives
\[
N=\langle z\rangle\times\langle b\rangle,
\qquad z\in Z(G), \qquad |z|=|b|=3.
\]
Let $\bar x,\bar y$ be generators of $G/N\cong M_{3^{r-2}}$ satisfying
\[
\bar x^{3^{r-3}}=\bar y^3=e,
\qquad
\bar y\bar x\bar y^{-1}=\bar x^{1+3^{r-4}}.
\]
Choose lifts $x,y\in G$ with $xN=\bar x$ and $yN=\bar y$. Then
\[
G=\langle x,y,z,b\rangle .
\]
By Lemma~\ref{lem:conjugation}, for each $t\in G$ there exists
$k(t)\in\{0,1,2\}$ such that
\[
tbt^{-1}=z^{k(t)}b.
\]
Since $z$ is central and $|z|=3$, we obtain
\[
b^{-1}tb=z^{k(t)}t,
\qquad
b^{-1}t^3b=t^3,
\]
so
\begin{equation}\label{eq:b-cube}
[b,t^{3}]=1\qquad\text{for all }t\in G .
\end{equation}
Set
\[
c=x^{3^{r-4}}.
\]
Since $\bar x$ has order $3^{\,r-3}$ in $G/N\cong M_{3^{r-2}}$, the element
$cN=\bar x^{\,3^{\,r-4}}$ has order $3$ in $G/N$ and, by
Lemma~\ref{lem:centre-L}, generates $\Omega_1\bigl(Z(G/N)\bigr)$, the
unique subgroup of order $3$ of the centre of $G/N$.  (Note that this says
nothing yet about the order of $c$ itself.  Both $|c|=3$ and $|c|=9$ will
occur below.)  In particular
\begin{equation}\label{eq:c-basic}
c\notin N,\qquad c^{3}\in N,\qquad\text{hence}\qquad c^{9}=1,
\end{equation}
the last equality because $N\cong C_3\times C_3$ has exponent $3$.  Being a
power of $x$, the element $c$ commutes with $x$.  Since $r\ge6$ we may
write $c=\bigl(x^{3^{\,r-5}}\bigr)^{3}$, so $[b,c]=1$ by
\eqref{eq:b-cube}.

Moreover, from
\[
\bar{y}\bar{x}\bar{y}^{-1}=\bar{x}^{1+3^{r-4}}=\bar{c}\,\bar{x}
\]
we obtain
\begin{equation}\label{eq:yxy}
yxy^{-1}=z^{i_{0}} b^{j_{0}} c\, x
\end{equation}
for some $i_{0},j_{0}\in\{0,1,2\}$.
Because $z$ is central, $[b,c]=1$, and $c$ commutes with $x$, we have
\begin{align*}
yx^3y^{-1}
&=(yxy^{-1})^3
=(z^{i_{0}} b^{j_{0}} c x)^3
= z^{3i_{0}}\,(b^{j_{0}} c x)^3 \\
&= (b^{j_{0}} c)(x b^{j_{0}} c x^{-1})(x^2 b^{j_{0}} c x^{-2})x^3.
\end{align*}
Now $x b x^{-1}=z^{k(x)}b$, hence
\[
x^\nu b^{j_{0}} x^{-\nu}=z^{\nu j_{0} k(x)}b^{j_{0}},
\quad  (\nu=0,1,2).
\]
Using also $z^3=1$ and $(b^{j_{0}})^3=1$, we obtain
\begin{equation}\label{eq:yx3y}
yx^3y^{-1}=c^3x^3.
\end{equation}
Since $r\ge6$ we have $3^{\,r-4}\ge9$, so $c^{3^{\,r-4}}=1$ by
\eqref{eq:c-basic}.  Therefore
\[
ycy^{-1}=yx^{3^{r-4}}y^{-1}=(yx^3y^{-1})^{3^{r-5}}= (c^3x^3)^{3^{r-5}}=c^{3^{r-4}}x^{3^{r-4}}=c.
\]
Thus $c$ commutes with $x$ (a power of $x$), with $z$ (which is central),
with $b$ (because $[b,c]=1$) and with $y$ (by the last display). Since
$G=\langle x,y,z,b\rangle$, we conclude
\[
c\in Z(G),\qquad c\neq e,
\]
the latter by \eqref{eq:c-basic}.  Now let
\[
S=\langle c,z\rangle\le Z(G).
\]

\medskip
\noindent
{\bf Case 1.} $|c|=3$.
Since $c\notin N$ and $\langle z\rangle\le N$, we have
$c\notin\langle z\rangle$, so
\[
S=\langle c\rangle\times\langle z\rangle\cong C_3\times C_3 .
\]
From $c^{3}=e$ and \eqref{eq:yx3y} we get
\[
yx^3y^{-1}=x^3,
\]
and since $x^{3}$ commutes with $x$, with $z$ and, by \eqref{eq:b-cube},
with $b$, it follows that $x^3\in Z(G)$.
Put
\[
d=x^{3^{r-5}}.
\]
Since $r\ge6$, we have $d\in\langle x^{3}\rangle\le Z(G)$ and $d^3=c$.
Moreover, for every $t\in G$ we have
\[
tbt^{-1}=z^{k(t)}b\equiv b \pmod S,
\]
so $bS\in Z(G/S)$. Also $dS\in Z(G/S)$ because $d\in Z(G)$.

We claim that $\langle bS\rangle$ and $\langle dS\rangle$ are two distinct
subgroups of order $3$ of $Z(G/S)$.

First, $b\notin S$: otherwise $b=c^{u}z^{v}$ for some $u,v$, and reducing
modulo $N$ gives $c^{u}N=N$, so $3\mid u$ and hence $c^{u}=e$. Thus
$b\in\langle z\rangle$, contradicting
$N=\langle z\rangle\times\langle b\rangle$.  As $b^{3}=e$, the element
$bS$ has order $3$.

Second, $dN$ has order $9$ in $G/N$, because $\bar x$ has order
$3^{\,r-3}$ and $d=x^{3^{\,r-5}}$.  Since $SN/N=\langle cN\rangle$ has
order $3$, this forces $d\notin S$. As $d^{3}=c\in S$, the element $dS$
also has order $3$.

Finally, if $\langle dS\rangle=\langle bS\rangle$, then
$dS=b^\varepsilon S$ for some $\varepsilon\in\{1,2\}$, and therefore
\[
d=b^\varepsilon c^u z^v
\]
for suitable integers $u,v$.  Passing modulo $N=\langle z,b\rangle$, we
obtain $dN=c^uN$, which is impossible because $dN$ has order $9$ whereas
$c^{u}N$ has order at most $3$.  This proves the claim.

Hence $Z(G/S)$ contains two distinct subgroups of order $3$, so $Z(G/S)$
is not cyclic. In particular $G/S$ is not cyclic.  By
Lemma~\ref{lem:centre-L} the centre of $M_{3^{k}}$ is cyclic for every $k\ge3$,
and $|G/S|=3^{\,r-2}$, so $M_{3^{r-2}}$ is the only group $M_{3^{k}}$ of that order.
Consequently
\[
G/S\not\cong M_{3^{k}}\qquad\text{for every }k,
\]
and by the minimality of $G$ the quotient $G/S$ is colourable.  Moreover
$S\cong C_3\times C_3$ is strongly admissible by
Theorem~\ref{3-group-abelian}, and $S\le Z(G)$. Applying
Corollary~\ref{central}, we conclude that $G$ is colourable, a
contradiction.

\medskip
\noindent
{\bf Case 2.} $|c|=9$. Then $c^3\in N$ by \eqref{eq:c-basic}, so we may
write
\[
c^3=z^{i_{1}} b^{j_{1}}
\]
with $i_{1},j_{1}\in\{0,1,2\}$. These exponents are unrelated to
$i_{0},j_{0}$ in \eqref{eq:yxy}.

\smallskip
\noindent
{\it Subcase 2a: $j_{1}\neq 0$.}
Then $b^{j_{1}}=z^{-i_{1}}c^{3}\in\langle c,z\rangle$ and, $j_{1}$ being
invertible modulo $3$, also $b\in\langle c,z\rangle$. Hence
$N=\langle z,b\rangle\le S$.  Furthermore $z\notin\langle c\rangle$:
otherwise $z$, being of order $3$, would lie in $\langle c^{3}\rangle$, so
$z=c^{\pm3}=(z^{i_{1}}b^{j_{1}})^{\pm1}$, which would give
$b^{\pm j_{1}}\in\langle z\rangle$ and hence $b\in\langle z\rangle$, a
contradiction.  Therefore
\[
S=\langle c\rangle\times\langle z\rangle \cong C_9\times C_3,
\qquad |S|=27,
\qquad N \le S\le Z(G).
\]
Moreover, by \eqref{eq:yxy},
\[
yxy^{-1}=z^{i_{0}} b^{j_{0}} c\,x \equiv x \pmod S,
\]
so $G/S$ is abelian.

Since $N\le S$, we have
\[
G/S \cong (G/N)/(S/N).
\]
Now $G/N\cong M_{3^{r-2}}$, and $S/N$ is the subgroup of order $3$ generated by
$cN=x^{3^{r-4}}N$, that is, $S/N=\Omega_1\bigl(Z(G/N)\bigr)$.  Therefore,
by Lemma~\ref{lem:centre-L},
\[
G/S \cong M_{3^{r-2}}\big/\Omega_1\bigl(Z(M_{3^{r-2}})\bigr)
      \cong C_{3^{\,r-4}}\times C_3 .
\]
In particular, $G/S$ is abelian and noncyclic (recall $r\ge6$).  Hence
$G/S$ is colourable by Theorem~\ref{3-group-abelian}.  Moreover
$S\cong C_9\times C_3$ is strongly admissible by
Theorem~\ref{3-group-abelian}, and $S\le Z(G)$. Now Corollary~\ref{central}
implies that $G$ is colourable, a contradiction.

\smallskip
\noindent
{\it Subcase 2b: $j_{1}=0$.}
Then $c^{3}=z^{i_{1}}$ with $i_{1}\in\{1,2\}$, because $|c|=9$. In particular
\begin{equation}\label{eq:z-in-c}
z\in\langle c\rangle
\qquad\text{and}\qquad
\langle c^{3}\rangle=\langle z\rangle .
\end{equation}
Put
\[
H=\langle c,b\rangle .
\]
We check that $H$ satisfies hypothesis \eqref{eq:dagger} of
Theorem~\ref{thm:lift-C9xC3}.

First, $|b|=3$, since $b\in N$ and $N\cong C_3\times C_3$ has exponent
$3$; and $|c|=9$ with $c\in Z(G)$, as established above.  Next,
$\langle c\rangle\cap N$ is a subgroup of the cyclic group
$\langle c\rangle$ of order $9$, and it has exponent at most $3$ because
$N$ has exponent $3$. Hence
\[
\langle c\rangle\cap N\le\langle c^{3}\rangle=\langle z\rangle .
\]
As $b\notin\langle z\rangle$, we get $\langle c\rangle\cap\langle b\rangle=\{e\}$.
Since $[b,c]=1$ and $c\in Z(G)$, the subgroup $H$ is abelian and
\[
H=\langle c\rangle\times\langle b\rangle\cong C_9\times C_3 ,
\qquad c\in Z(G),\quad |c|=9,\quad |b|=3 .
\]
Finally, $H\trianglelefteq G$: by \eqref{eq:z-in-c} we have $N=\langle z,b\rangle\le H$,
and for every $t\in G$
\[
t\,c\,t^{-1}=c,
\qquad
t\,b\,t^{-1}=z^{\,k(t)}b\in N\le H .
\]

It remains to see that $G/H$ is colourable.  Since $N\le H$,
\[
G/H\cong (G/N)\big/(H/N),
\]
and $H/N$ is the subgroup of order $3$ generated by $cN=x^{3^{r-4}}N$,
i.e.\ $H/N=\Omega_1\bigl(Z(G/N)\bigr)$.  Hence, by
Lemma~\ref{lem:centre-L},
\[
G/H\cong M_{3^{r-2}}\big/\Omega_1\bigl(Z(M_{3^{r-2}})\bigr)\cong C_{3^{\,r-4}}\times C_3 ,
\]
which is abelian and noncyclic (recall $r\ge6$), so $G/H$ is colourable
by Theorem~\ref{3-group-abelian}.

By Theorem~\ref{thm:lift-C9xC3}, the group $G$ is colourable --- a
contradiction.

\smallskip
All cases lead to a contradiction, completing the proof.
\end{proof}

\section{\texorpdfstring{$\omega(\Orth(G))$ for groups of exponent $3$}%
{omega(Orth(G)) for groups of exponent 3}}\label{sec:omega}

By Corollary~\ref{cor:exp3} every group $G$ of exponent $3$ with $|G|>3$ is colourable,
and every $\sigma\in\CB(G)$ gives five mutually orthogonal Latin squares
$L_{\mathrm{id}},L_\iota,L_\sigma,L_{\Dp},L_{\Dm}$ of order $|G|$.  Equivalently, the four
vertices $\iota,\sigma,\Dp,\Dm$ form a $K_4$ in $\Orth(G)$.  Hence
\[
\omega\bigl(\Orth(G)\bigr)\ \ge\ 4
\qquad\text{for every group of exponent $3$ with $|G|>3$.}
\]
We show here that for all such groups but one considerably more is true, and we isolate
the single group for which this bound is the best one we know.

The bounds below do not use the Main Theorem itself, only one of its ingredients,
Lemma~\ref{lem:normal-C3xC3}, together with the quotient construction of Quinn \cite{Q}:
\begin{equation}\label{eq:quinn}
\omega\bigl(\Orth(G)\bigr)\ \ge\
\min\Bigl(\omega\bigl(\Orth(H)\bigr),\ \omega\bigl(\Orth(G/H)\bigr)\Bigr)
\qquad\text{for } H\trianglelefteq G,\ \{e\}\neq H\neq G,
\end{equation}
and on the classical value for elementary abelian groups. By \cite[Thm.~25]{E4}, if
$E\cong C_p^{\,s}$ with $s\ge1$, then $\omega(\Orth(E))=p^{s}-2$. A maximum clique
is given by the $p^{s}-2$ linear orthomorphisms $x\mapsto ax$ with
$a\in\mathrm{GF}(p^{s})$, $a\neq0,1$.  Equivalently, the translation squares based on $E$
contain a complete set of $p^{s}-1$ mutually orthogonal Latin squares.

\begin{lem}\label{lem:series-bound}
Let $p$ be a prime and let $G$ be a nontrivial finite $p$-group possessing a chain
\[
\{e\}=N_0<N_1<\dots<N_k=G,
\qquad N_i\trianglelefteq G,\qquad k\ge1,
\]
in which every factor $N_i/N_{i-1}$ is elementary abelian of rank at least $r\ge1$.
Then
\[
\omega\bigl(\Orth(G)\bigr)\ \ge\ p^{\,r}-2 .
\]
\end{lem}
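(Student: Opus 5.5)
We argue by induction on $k$, the length of the chain, combining Quinn's quotient bound \eqref{eq:quinn} with the value $\omega(\Orth(E))=p^{s}-2$ for elementary abelian $E\cong C_p^{\,s}$ from \cite[Thm.~25]{E4}. First we record a monotonicity remark: for $s\ge r$ we have $p^{s}-2\ge p^{r}-2$. Hence every factor $N_i/N_{i-1}$, being elementary abelian of rank $s_i\ge r$, satisfies $\omega(\Orth(N_i/N_{i-1}))\ge p^{r}-2$.

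\emph{Base case $k=1$.} Here $G=N_1$ is itself elementary abelian of rank at least $r$. The bound is then immediate from \cite[Thm.~25]{E4}. No appeal to \eqref{eq:quinn} is needed, and none is possible, since that inequality requires a proper nontrivial normal subgroup.

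\emph{Inductive step, $k\ge2$.} We apply \eqref{eq:quinn} with $H=N_{k-1}$. This $H$ is normal in $G$ by hypothesis, nontrivial because $N_1>N_0$, and proper because $N_k>N_{k-1}$. The quotient $G/N_{k-1}=N_k/N_{k-1}$ is elementary abelian of rank at least $r$, so $\omega(\Orth(G/N_{k-1}))\ge p^{r}-2$.

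For the subgroup $N_{k-1}$ itself, the truncated chain $\{e\}=N_0<\dots<N_{k-1}$ consists of subgroups normal in $G$, hence normal in $N_{k-1}$. Its factors are unchanged, and its length is $k-1\ge1$. The induction hypothesis, applied to the $p$-group $N_{k-1}$, therefore gives $\omega(\Orth(N_{k-1}))\ge p^{r}-2$. Taking the minimum in \eqref{eq:quinn} yields the claim.

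The only point needing care is the bookkeeping of the hypotheses in \eqref{eq:quinn}. The subgroup must be proper and nontrivial, which is why the base case is handled separately. Normality must pass to the truncated chain, which holds because normality in $G$ implies normality in any subgroup containing it. Nothing about colouring bijections is used, and the hypothesis that $G$ is a $p$-group plays no role beyond guaranteeing that the factors are $p$-groups. Thus we expect no genuine obstacle; the lemma is a formal consequence of \eqref{eq:quinn} and \cite[Thm.~25]{E4}.
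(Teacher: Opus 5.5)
Your proof is correct, and it uses the same induction on $k$ and the same two ingredients as the paper: Quinn's bound \eqref{eq:quinn} and Evans' value $\omega(\Orth(C_p^{\,s}))=p^{s}-2$ from \cite[Thm.~25]{E4}. The difference is the end of the chain you peel off.

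The paper takes $H=N_1$. It applies Evans' value to the bottom factor $N_1$ and applies the induction hypothesis to the quotient $G/N_1$, using the chain $N_i/N_1$. You take $H=N_{k-1}$. You apply Evans' value to the top factor $G/N_{k-1}$ and apply the induction hypothesis to the subgroup $N_{k-1}$, using the truncated chain.

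The two arguments are equally short, and you handle the hypotheses of \eqref{eq:quinn} correctly ($N_{k-1}$ is nontrivial and proper when $k\ge2$). Your version is also slightly more general. The paper's route needs every $N_i$ to be normal in $G$, so that $N_i/N_1\trianglelefteq G/N_1$ at each stage of the recursion. Your route only ever uses $N_{k-1}\trianglelefteq N_k$, and then the same condition one level down. So it proves the bound for any subnormal chain $N_{i-1}\trianglelefteq N_i$ whose factors are elementary abelian of rank at least $r$. The paper's applications do not need this extra generality.
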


\begin{proof}
Induction on $k$.  If $k=1$ then $G=N_1$ is elementary abelian of some rank $s\ge r$, so
$\omega(\Orth(G))=p^{s}-2\ge p^{r}-2$.

Let $k\ge2$ and put $H=N_1$, which is normal in $G$ and elementary abelian of rank
$s\ge r$, whence $\omega(\Orth(H))=p^{s}-2\ge p^{r}-2$.  The subgroups $N_i/H$ for
$1\le i\le k$ are normal in $G/H$, form a chain from the trivial subgroup to $G/H$, and
their successive factors are $(N_i/H)\big/(N_{i-1}/H)\cong N_i/N_{i-1}$, again elementary
abelian of rank at least $r$.  By the inductive hypothesis
$\omega(\Orth(G/H))\ge p^{r}-2$, and \eqref{eq:quinn} gives
$\omega(\Orth(G))\ge\min(p^{s}-2,\,p^{r}-2)=p^{r}-2$.
\end{proof}

\begin{cor}\label{cor:exp3-even}
Let $G$ be a group of exponent $3$ with $|G|=3^{2n}$, $n\ge1$.  Then $G$ has a chain of
normal subgroups with all factors isomorphic to $C_3\times C_3$, and consequently
\[
\omega\bigl(\Orth(G)\bigr)\ \ge\ 7 .
\]
\end{cor}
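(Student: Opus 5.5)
We argue by induction on $n$: we build the chain of normal subgroups and then apply Lemma~\ref{lem:series-bound} with $p=3$ and $r=2$. That lemma gives $\omega(\Orth(G))\ge 3^{2}-2=7$ once such a chain exists.

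\emph{Case $n=1$.} Here $|G|=9$. Every group of order $9$ is abelian, and a group of exponent $3$ and order $9$ is $C_3\times C_3$. So the chain $\{e\}<G$ works.

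\emph{Case $n\ge2$.} Here $|G|=3^{2n}\ge 81$ and $G$ has exponent $3$. Hence $G$ is noncyclic and all its subgroups have exponent dividing $3$.
\begin{itemize}
\item In particular no maximal subgroup, of order $3^{2n-1}\ge 27$, is cyclic.
\item So Lemma~\ref{lem:normal-C3xC3} applies and gives a normal subgroup $N_1\cong C_3\times C_3$. Alternatively, Lemma~\ref{lem:normal-pp} suffices, since we do not need the quotient to be noncyclic.
\item The quotient $G/N_1$ again has exponent $3$, since every element of it is the image of an element of order dividing $3$, and $|G/N_1|=3^{2(n-1)}$.
\item By induction $G/N_1$ has a chain $\{e\}=\bar N_1<\bar N_2<\dots<\bar N_n=G/N_1$ of normal subgroups with all factors $\cong C_3\times C_3$.
\item Pulling back along $\pi\colon G\to G/N_1$ gives subgroups $N_i=\pi^{-1}(\bar N_i)$. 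These are normal in $G$, and $N_i/N_{i-1}\cong\bar N_i/\bar N_{i-1}\cong C_3\times C_3$ by the third isomorphism theorem.
\end{itemize}
Prepending $\{e\}<N_1$ yields the required chain in $G$.

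The induction hypothesis is invoked only for groups of exponent $3$ of even $3$-power order, so it is self-consistent. With the chain in hand, Lemma~\ref{lem:series-bound} finishes the proof.

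The only step needing care is the existence of a normal $C_3\times C_3$ in $G$, and this is supplied directly by Lemma~\ref{lem:normal-pp}, since $G$ is a noncyclic $3$-group. The exponent-$3$ hypothesis is what guarantees that every successive quotient is again a group of the same kind, with even $3$-exponent in its order. I expect no real obstacle beyond this bookkeeping.
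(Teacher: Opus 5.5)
Your proposal is correct and follows the paper's proof essentially step for step. Both argue by induction on $n$, find a normal $C_3\times C_3$ in $G$, pass to the exponent-$3$ quotient of order $3^{2n-2}$, pull the chain back along preimages, and finish with Lemma~\ref{lem:series-bound} for $p=3$, $r=2$. Your remark that Lemma~\ref{lem:normal-pp} already suffices is a valid minor simplification. The paper instead invokes Lemma~\ref{lem:normal-C3xC3}, which first requires checking that no maximal subgroup is cyclic; only a normal $C_3\times C_3$ is needed here, not a noncyclic quotient.
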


\begin{proof}
We construct the chain by induction on $n$.  For $n=1$ the group $G$ has order $9$ and
exponent $3$, hence $G\cong C_3\times C_3$ and the chain $\{e\}\le G$ will do.

Let $n\ge2$, so that $|G|=3^{2n}\ge81>27$.  Every element of $G$ has order at most $3$,
while a cyclic maximal subgroup would have order $3^{2n-1}\ge27$ and would therefore
contain an element of order greater than $3$. Hence all maximal subgroups of $G$ are
noncyclic and Lemma~\ref{lem:normal-C3xC3} applies.  It yields a normal subgroup
$N\trianglelefteq G$ with $N\cong C_3\times C_3$.  The quotient $G/N$ has exponent $3$,
being a quotient of $G$, and order $3^{2n-2}$, so by the inductive hypothesis it has a
chain of normal subgroups with all factors isomorphic to $C_3\times C_3$.  Taking
preimages in $G$ and prefixing $\{e\}\le N$ produces the required chain for $G$, since
preimages of subgroups normal in $G/N$ are normal in $G$ and the corresponding factors are
isomorphic.  The bound now follows from Lemma~\ref{lem:series-bound} with $p=3$ and $r=2$.
\end{proof}

The restriction to even powers of $3$ is only apparent. The odd ones reduce to it in a
single step, once the groups of order $27$ are set aside.

\begin{cor}\label{cor:exp3-all}
Let $G$ be a group of exponent $3$ with $|G|>27$.  Then
\[
\omega\bigl(\Orth(G)\bigr)\ \ge\ 7 .
\]
\end{cor}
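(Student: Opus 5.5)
Since $\omega(\Orth(G))\ge7$ is already Corollary~\ref{cor:exp3-even} when $|G|$ is an even power of $3$, only the case $|G|=3^{2n+1}$ with $n\ge2$ remains, that is $|G|\ge3^{5}$. The plan is to peel off a single normal subgroup of order $27$ whose orthomorphism graph has clique number at least $7$. The quotient then has even exponent and exponent $3$, so Corollary~\ref{cor:exp3-even} applies to it. Quinn's bound \eqref{eq:quinn} then finishes the argument.

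\textbf{Choosing the normal subgroup.} Because $G$ has exponent $3$ and $|G|\ge3^5$, all maximal subgroups of $G$ are noncyclic. Lemma~\ref{lem:normal-C3xC3} therefore gives $N\trianglelefteq G$ with $N\cong C_3\times C_3$. Next pass to $G/N$, which has exponent $3$ and order $3^{2n-1}\ge27$. In the nontrivial $3$-group $G/N$ the normal subgroup $N\cdot Z(G)/N$, or simply $Z(G/N)$, is nontrivial, so we can pick $\bar M\le Z(G/N)$ of order $3$. Its preimage $M$ is normal in $G$ with $|M|=27$. Being of exponent $3$, $M$ is either $C_3^3$ or $H_3$.

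\textbf{Applying the bounds.} The quotient $G/M$ has exponent $3$ and order $3^{2n-2}$, which is an even power of $3$ and nontrivial since $n\ge2$. Corollary~\ref{cor:exp3-even} thus gives $\omega(\Orth(G/M))\ge7$. For $M$ itself there are two cases:
\begin{itemize}
\item[(a)] If $M\cong C_3^3$, then $\omega(\Orth(M))=25\ge7$ by \cite[Thm.~25]{E4}.
\item[(b)] If $M\cong H_3$, we cannot use Lemma~\ref{lem:series-bound} with $r=2$, since $H_3$ has no chain with rank-$2$ factors.
\end{itemize}

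\textbf{The main obstacle: $M\cong H_3$.} Quinn's bound in $H_3$ only yields rank-$1$ factors. I would first avoid this case by choosing $M$ elementary abelian. The idea is to take $M=N\cdot\langle w\rangle$ with $w$ chosen so that $w$ centralises $N$ and $wN$ is central in $G/N$. This requires $C_G(N)/N$, which is normal in $G/N$ and nontrivial because $|G:C_G(N)|\le3$ while $|G/N|\ge27$, to meet $Z(G/N)$ nontrivially. That holds for any nontrivial normal subgroup of a $p$-group. With such a $w$, the group $M=\langle N,w\rangle$ is abelian of exponent $3$ and order $27$, so $M\cong C_3^3$. It is normal in $G$ because its image in $G/N$ is central. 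Case (a) then applies, and \eqref{eq:quinn} gives $\omega(\Orth(G))\ge\min(25,7)=7$.

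The key point to verify carefully is that $C_G(N)/N$ is a nontrivial normal subgroup of $G/N$, which follows from the index bound above.
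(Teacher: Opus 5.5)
Your proof is correct, but it takes the opposite route from the paper's. The paper puts the rank-$3$ elementary abelian group on top. A $2$-generator group of exponent $3$ is a quotient of the Burnside group $B(2,3)$, which has order $27$. So for $|G|=3^{n}$ with $n\ge5$ odd, the quotient $G/\Phi(G)=G/G'$ has rank at least $3$, and there is $H\trianglelefteq G$ with $G/H\cong C_3^{3}$. Corollary~\ref{cor:exp3-even} is then applied to the \emph{subgroup} $H$ of order $3^{n-3}$, and \eqref{eq:quinn} gives $\min(7,25)=7$.

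You instead put $C_3^{3}$ at the bottom, as a normal subgroup $M$, and apply Corollary~\ref{cor:exp3-even} to the \emph{quotient} $G/M$, which also has order $3^{n-3}$. Since \eqref{eq:quinn} is symmetric in $H$ and $G/H$, this works equally well.

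Your construction of $M$ is sound. The group $G/C_G(N)$ embeds in a Sylow $3$-subgroup of $\GL(2,\F_3)$, which has order $3$. Hence $C_G(N)/N$ is a nontrivial normal subgroup of $G/N$ and meets $Z(G/N)$. Choose $w$ with $wN\neq N$ in that intersection. Then $M=N\langle w\rangle$ is abelian of exponent $3$ and order $27$, and $M/N$ is central in $G/N$. So $M\cong C_3^{3}$ and $M\trianglelefteq G$.

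Your route stays entirely inside the paper's own tools: Lemma~\ref{lem:normal-pp} suffices in place of Lemma~\ref{lem:normal-C3xC3}, together with elementary $p$-group facts. It avoids Burnside's theorem on $B(2,3)$ altogether. The paper's route is shorter: once $d(G)\ge3$ is known, $H$ is any subgroup of codimension $3$ above $\Phi(G)$, and its normality is automatic.

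Two minor remarks. Your case analysis of $M\cong H_3$ is a dead end that the final argument does not need. Also, ``even exponent'' in your first paragraph should read ``order an even power of $3$''.
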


\begin{proof}
Write $|G|=3^{n}$, so that $n\ge4$.  For $n$ even this is
Corollary~\ref{cor:exp3-even}, so assume $n$ is odd, $n\ge5$.

We first observe that $G$ cannot be generated by two elements.  Indeed, a $2$-generator
group of exponent $3$ is a quotient of the free Burnside group $B(2,3)$, which is finite
of order $27$ by Burnside's theorem \cite[\S14.2]{R}. As $|G|>27$, at least three
generators are needed.  Since $\exp(G)=3$ we have $\Phi(G)=G'$, and the elementary
abelian group $G/\Phi(G)$ therefore has rank $d\ge3$.

Choose a subgroup $H$ with $\Phi(G)\le H\le G$ whose image in $G/\Phi(G)$ has
codimension $3$. This is possible because $d\ge3$.  Every subgroup containing $\Phi(G)$
is normal in $G$, the quotient $G/\Phi(G)$ being abelian, so $H\trianglelefteq G$ and
$G/H\cong C_3^{\,3}$, whence $\omega\bigl(\Orth(G/H)\bigr)=3^{3}-2=25$.  On the other
hand $H$ has exponent $3$ and order $3^{\,n-3}$ with $n-3$ even and positive, so
Corollary~\ref{cor:exp3-even} gives $\omega\bigl(\Orth(H)\bigr)\ge7$.  Now
\eqref{eq:quinn} yields $\omega\bigl(\Orth(G)\bigr)\ge\min(7,25)=7$.
\end{proof}

\begin{cor}\label{cor:extraspecial-omega}
Let $G$ be an extraspecial group of order $3^{1+2m}$, $m\ge1$, of either exponent $3$ or
exponent $9$.  Then
\[
\omega\bigl(\Orth(G)\bigr)\ \ge\ 3^{m}-2 .
\]
\end{cor}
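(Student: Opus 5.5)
The plan is to apply Lemma~\ref{lem:series-bound} with $p=3$ and $r=m$. This requires a chain of normal subgroups of $G$ whose factors are elementary abelian of rank at least $m$. For an extraspecial group $G$ of order $3^{1+2m}$ we have $Z(G)=G'=\Phi(G)\cong C_3$, and $G/Z(G)\cong C_3^{\,2m}$ carries the nondegenerate alternating commutator form. The natural choice is a chain
\[
\{e\}<N_1<N_2=G
\]
in which $N_1$ is an elementary abelian normal subgroup of order $3^{1+m}$ containing $Z(G)$. The factors are then $N_1\cong C_3^{\,m+1}$, of rank $m+1\ge m$, and $G/N_1$, which is a quotient of $G/Z(G)$ and hence elementary abelian of rank $m$. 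Lemma~\ref{lem:series-bound} then gives $\omega(\Orth(G))\ge 3^{m}-2$. Any subgroup containing $Z(G)=\Phi(G)$ is automatically normal, since $G/\Phi(G)$ is abelian, so normality costs nothing.

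The key step is to produce such an $N_1$, namely the preimage of a Lagrangian (maximal totally isotropic) subspace $W\le G/Z(G)$ of dimension $m$, and to check that this preimage is elementary abelian in both exponent cases. It is abelian because $W$ is totally isotropic for the commutator form.

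\emph{Exponent $3$.} Every element has order dividing $3$, so $N_1$ is elementary abelian for any Lagrangian $W$.

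\emph{Exponent $9$.} Here I would use the standard description $G\cong M_{27}*H_3*\cdots*H_3$, a central product with $m-1$ copies of the exponent-$3$ group $H_3$, which the paper does not prove. Alternatively, I would use that for $p$ odd the cubing map $G\to Z(G)$ is a homomorphism, since $G$ has class $2$ and $(xy)^3=x^3y^3[y,x]^3$. It factors through a linear functional $f$ on $V=G/Z(G)$. The requirement is then a Lagrangian $W\subseteq\ker f$. Since $\ker f$ is a hyperplane, its radical with respect to the form is a line $\ell$ contained in it. A Lagrangian of the symplectic space $\ell^{\perp}/\ell=\ker f/\ell$, which has dimension $2m-2$, pulled back and joined with $\ell$, gives a Lagrangian $W\subseteq\ker f$ of dimension $m$. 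Elements of its preimage have cube equal to $e$, so $N_1$ is abelian of exponent $3$, that is, elementary abelian of order $3^{m+1}$.

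I expect the exponent-$9$ case to be the main obstacle: one has to verify that cubing is a homomorphism with kernel containing a Lagrangian preimage, as above. Once $N_1$ is in hand, the bound is immediate from Lemma~\ref{lem:series-bound}. (Equivalently, one can apply \eqref{eq:quinn} directly, with $\omega(\Orth(N_1))=3^{m+1}-2$ and $\omega(\Orth(G/N_1))=3^{m}-2$, both by \cite[Thm.~25]{E4}.)
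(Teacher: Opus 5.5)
Your proposal is correct and follows essentially the paper's proof: the same chain $\{e\}<N_1<G$ through the preimage of a Lagrangian $W\le G/Z(G)$, with the cubing map inducing a linear form whose kernel must contain $W$, and then Lemma~\ref{lem:series-bound}. The only difference is in how $W\subseteq\ker f$ is secured. The paper writes the form as $\beta(u,\cdot)$, treats both exponents at once (with $u=0$ in exponent $3$), and takes a Lagrangian through $u$, so that $W=W^{\perp}\subseteq u^{\perp}$. Your construction via $\ell^{\perp}/\ell$ with $\ell=\langle u\rangle$ is the same argument made explicit.
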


\begin{proof}
Write $Z=Z(G)=G'$, a group of order $3$, which we identify once and for all with the
additive group of $\F_3$, and put $V=G/Z$, an $\F_3$-vector space of dimension
$2m$.  The commutator induces a map
\[
\beta\colon V\times V\to Z\cong\F_3,\qquad \beta(xZ,yZ)=[x,y],
\]
which is well defined and $\F_3$-bilinear because $G$ has class $2$ and $G'=Z$ has
exponent $3$, and which is alternating since $[x,x]=e$.  It is nondegenerate: if
$\beta(xZ,\cdot)=0$ then $x$ centralises $G$, that is $x\in Z$.

Since $G$ has class $2$, one has $(xy)^{3}=x^{3}y^{3}[y,x]^{\binom{3}{2}}=x^{3}y^{3}$,
the last equality because  $\exp(G')=3$.  Hence $x\mapsto x^{3}$ is a
homomorphism $G\to Z$, and it kills $G'=Z$, so it induces a linear form
\[
\varphi\colon V\to Z\cong\F_3,\qquad \varphi(xZ)=x^{3},
\]
which vanishes identically if $\exp(G)=3$ and is nonzero if $\exp(G)=9$.  By
nondegeneracy of $\beta$ there is a unique $u\in V$ with $\varphi=\beta(u,\cdot)$, so that
$\ker\varphi=u^{\perp}$. In the exponent $3$ case $u=0$.

Choose a totally isotropic subspace $W\le V$ of dimension $m$ containing $u$. Such a
subspace exists, since $u$ spans a totally isotropic subspace and every totally isotropic
subspace of a nondegenerate alternating form is contained in one of dimension $m$.  As
$W$ is totally isotropic we have $W^{\perp}=W$, whence $u\in W$ gives
$W\subseteq u^{\perp}=\ker\varphi$.

Let $H\le G$ be the preimage of $W$, so that $|H|=3^{m+1}$.  It is abelian, because
$[x,y]=\beta(xZ,yZ)=0$ for $x,y\in H$, and of exponent $3$, because
$x^{3}=\varphi(xZ)=0$, that is $x^{3}=e$, for $x\in H$. Hence $H\cong C_3^{\,m+1}$.  Moreover $H\supseteq Z=G'$,
so $H$ is normal in $G$ and $G/H\cong V/W$ is elementary abelian of order $3^{m}$.
Lemma~\ref{lem:series-bound} applied to the chain $\{e\}\le H\le G$, whose factors are
elementary abelian of ranks $m+1$ and $m$, gives $\omega(\Orth(G))\ge3^{m}-2$.
\end{proof}

\begin{rem}\label{rem:exp3-sharpness}
Corollary~\ref{cor:exp3-all} improves the bound $\omega(\Orth(G))\ge4$ of
Corollary~\ref{cor:exp3} to $\omega(\Orth(G))\ge7$ for every group of exponent $3$ of
order greater than $27$.  For the extraspecial groups of exponent $3$ and order
$3^{1+2m}$, Corollary~\ref{cor:extraspecial-omega} improves this further as soon as
$m\ge3$, where $3^{m}-2\ge25$. For $m=2$ the two bounds agree.
Among the groups of exponent $3$ with $|G|>3$ this leaves only the two of order $27$.
For $C_3\times C_3\times C_3$ the value $\omega(\Orth(G))=25$ is classical
\cite[Thm.~25]{E4}.  For $H_3$ we know nothing beyond $\omega(\Orth(H_3))\ge4$.  Indeed
$m=1$ there, so Corollary~\ref{cor:extraspecial-omega} returns only the trivial bound
$1$; and every nontrivial proper normal subgroup of $H_3$ is either $Z(H_3)\cong C_3$ or
a subgroup of index $3$ with quotient $C_3$, so that, $\omega(\Orth(C_3))$ being $1$, the
right-hand side of \eqref{eq:quinn} equals $1$ for every admissible choice of $H$.  Hence
$H_3$ is the unique group of exponent $3$ with $|G|>3$ for which Corollary~\ref{cor:exp3}
remains the best bound available to us.
\end{rem}

\begin{rem}\label{rem:exp9-extraspecial}
For the extraspecial groups of exponent $9$ the comparison with
Corollary~\ref{cor:exp3} does not arise, that corollary not applying to them.  For
$m\ge2$ Corollary~\ref{cor:extraspecial-omega} is then the only bound we know beyond
$\omega(\Orth(G))\ge2$, which holds for every colourable group by
Corollary~\ref{cor:mols-3groups}.  For $m=1$, that is for $M_{27}$, it returns $1$ and so
says less than that, and $\omega(\Orth(M_{27}))\ge2$ is all we have.
\end{rem}

\section{Concluding remarks}

\begin{enumerate}
\item
Every finite nilpotent group is a direct product of its Sylow subgroups. Hence,
if $G$ has odd order and its Sylow $3$-subgroup is noncyclic and not
isomorphic to any $M_{3^{r}}$ ($r\ge 4$), then $G$ is colourable. In particular:

\begin{cor}
Let $G$ be a nilpotent group of odd order whose Sylow $3$-subgroup is
neither cyclic of order greater than $1$ nor isomorphic to any group
$M_{3^{r}}$ ($r\ge 4$). Then
\[
\chi(\mathscr{G}_3(G)) = |G|.
\]
\end{cor}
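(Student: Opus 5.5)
Since $G$ is nilpotent, I will write $G=G_3\times K$, where $G_3$ is the Sylow $3$-subgroup and $K$ is the direct product of the remaining Sylow subgroups. The hypothesis that $|G|$ is odd gives $\gcd(|K|,6)=1$. The plan has three steps: produce a colouring bijection on each factor, show that colouring bijections multiply across direct products, and finish with Proposition~\ref{chromatic}.

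\emph{Colouring bijections on the factors.} For $K$, Corollary~\ref{cor:coprime6} gives $\tau(k)=k^{2}\in\CB(K)$; this covers the trivial case $K=\{e\}$ as well. For $G_3$, the hypothesis says it is either trivial, or noncyclic and not isomorphic to any $M_{3^{r}}$ with $r\ge4$.
\begin{enumerate}
\item If $G_3$ is trivial, Corollary~\ref{cor:coprime6} applies to $G$ directly and we are done.
\item Otherwise Theorem~\ref{thm:main} supplies some $\sigma_3\in\CB(G_3)$. The theorem also covers the orders $9$ and $27$, including $M_{27}$, which is not excluded by the hypothesis.
\end{enumerate}

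\emph{Product lemma.} I will show that if $\sigma_i\in\CB(G_i)$ for $i=1,2$, then $\sigma=\sigma_1\times\sigma_2$ lies in $\CB(G_1\times G_2)$. All three maps $\Dp,\Dm,\Dc$ are computed coordinatewise. For example,
\[
\Dc_\sigma(x_1,x_2)=\bigl(\sigma_1(x_1)^{-1}x_1\sigma_1(x_1),\ \sigma_2(x_2)^{-1}x_2\sigma_2(x_2)\bigr)=\bigl(\Dc_{\sigma_1}(x_1),\Dc_{\sigma_2}(x_2)\bigr),
\]
and the same holds for $\Dp$ and $\Dm$. So each of $\Dp_\sigma$, $\Dm_\sigma$, $\Dc_\sigma$ is a product of two bijections, hence a bijection, and $\sigma$ itself is a bijection. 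Applying this to $\sigma_3\times\tau$ gives $G$ colourable.

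\emph{Conclusion.} Proposition~\ref{chromatic} then yields $\chi(\mathscr{G}_3(G))=|G|$.

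No step is a real obstacle. The only points needing care are noticing that the product lemma is needed at all, and checking the degenerate cases: $G_3$ trivial is handled by Corollary~\ref{cor:coprime6}, and the small orders $9$ and $27$ are covered by Theorem~\ref{thm:main}.
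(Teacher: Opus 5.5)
Your proposal is correct and follows the paper's argument: split the nilpotent group as its Sylow $3$-subgroup times the product $K$ of the remaining Sylow subgroups (of order coprime to $6$), get a colouring bijection on the first factor from Theorem~\ref{thm:main} and on $K$ from Corollary~\ref{cor:coprime6}, and conclude with Proposition~\ref{chromatic}. You also spell out two points the paper's one-line justification leaves tacit: the coordinatewise product lemma for colouring bijections, and the case of a trivial Sylow $3$-subgroup, which the hypothesis allows and which Corollary~\ref{cor:coprime6} settles directly.
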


\item
The colourability of the groups $M_{3^{r}}$ for $r\ge 4$ remains open.  

\item  If $G$ and $H$ are nontrivial $3$-groups, then the direct product $G\times H$ is colourable.
Indeed, its centre satisfies
\[
Z(G\times H)=Z(G)\times Z(H),
\]
which is noncyclic as a direct product of nontrivial abelian $3$-groups.  Hence $G\times H$
is itself noncyclic, and it is not isomorphic to any group $M_{3^{s}}$, whose centre is
cyclic by Lemma~\ref{lem:centre-L}.  Theorem~\ref{thm:main} therefore applies.

\item
By Theorem~\ref{thm:main}, every noncyclic $3$-group that is not isomorphic to $M_{3^{r}}$ ($r\ge4$) is colourable.
A natural direction for further study is the case of $2$-groups. Recent results of Akhtar, Charboneau and Gagola \cite{ACG} on strong complete mappings 
for $2$-groups suggest that an analogous phenomenon may occur in this setting. In particular, it is of interest to determine 
which strongly admissible $2$-groups are colourable.

\item
Another natural direction is to study the colourability of solvable groups, particularly those of order $2^m3^n$.

\end{enumerate}

\section*{Data availability}

The code verifying all computational statements of this paper is openly available
at \cite{code} under the MIT licence.  It is written in Python, has no third-party
dependencies, and reproduces both tables of the appendix.

\section*{Declaration of generative AI and AI-assisted technologies in the writing
process}

During the preparation of this work the author used Claude (Anthropic) in order to
check the language and editorial consistency of the manuscript, and to assist in
preparing the verification code and its documentation cited as \cite{code}.  After
using this tool, the author reviewed and edited the content as needed and takes full
responsibility for the content of the publication.

\appendix

\section{\texorpdfstring{Bijections for $\mathbb{Z}_9\times\mathbb{Z}_3$}{Bijections for Z9 x Z3}}
\label{app:C9C3-tables}

All computational statements of this paper were obtained by exhaustive computer search:
the colouring bijections of $H_3$ and $M_{27}$ in Table~\ref{tab:H3-L3-add-fixing-id}, the
triviality of the holomorph stabilisers in Remark~\ref{rem:27-orbits}, the enumeration of
$\CB(M_{16})$ in Remark~\ref{rem:scm-not-cb}, the pair of matrices of
Lemma~\ref{lem:pair-choice}, and the bijections $\alpha_1,\alpha_2$ below.
The entries of Tables~\ref{tab:H3-L3-add-fixing-id} and~\ref{Tab:3-add} were in addition
verified cell by cell against the group laws: for every element the printed values of
$\Dp$, $\Dm$, $\Dc$ (respectively $A_\lambda$, $B_\lambda$, $C_\lambda$) were recomputed,
and each column was confirmed to be a permutation.  The code that carries out all of this
is written in Python, requires no third-party libraries, and is archived at
\cite{code}; it also rediscovers each of the objects above by an independent search that
does not consult the tables.

Elements of $H\cong C_9\times C_3$ are written additively as
pairs $(u,j)\in\mathbb Z_9\times\mathbb Z_3$ corresponding to $c^{u}b^{j}$,
where $|c|=9$, $|b|=3$ and $z=c^{3}=(3,0)$. Recall that
$p_{\lambda}(u,j)=(3\lambda j,\,0)$.  For $\lambda=0$ one has
$p_{0}\equiv 0$, so no table is needed: the conditions of
Lemma~\ref{alpha} reduce to $\alpha_{0}\in\SCM(H)$, and such a map exists
by Theorem~\ref{3-group-abelian}.

\begin{longtable}{c cccc cccc}
\caption{The bijections $\alpha_\lambda$ of Lemma~\ref{alpha} for $\lambda=1,2$, in the additive
coordinates $(u,j)\leftrightarrow c^{u}b^{j}$ of $H\cong\mathbb Z_9\times\mathbb Z_3$,
together with $A_\lambda(h)=h+\alpha_\lambda(h)+p_\lambda(\alpha_\lambda(h))$,
$B_\lambda(h)=-h+\alpha_\lambda(h)$ and $C_\lambda(h)=h+p_\lambda(\alpha_\lambda(h))$.
Each column is a permutation of $H$.}
\label{Tab:3-add}\\
\toprule
& \multicolumn{4}{c}{$\lambda=1$} & \multicolumn{4}{c}{$\lambda=2$}\\
\cmidrule(lr){2-5}\cmidrule(lr){6-9}
$h$ & $\alpha_1(h)$ & $A_1(h)$ & $B_1(h)$ & $C_1(h)$
    & $\alpha_2(h)$ & $A_2(h)$ & $B_2(h)$ & $C_2(h)$\\
\midrule
\endfirsthead
\multicolumn{9}{c}{\textit{Table \ref{Tab:3-add} continued}}\\
\toprule
& \multicolumn{4}{c}{$\lambda=1$} & \multicolumn{4}{c}{$\lambda=2$}\\
\cmidrule(lr){2-5}\cmidrule(lr){6-9}
$h$ & $\alpha_1(h)$ & $A_1(h)$ & $B_1(h)$ & $C_1(h)$
    & $\alpha_2(h)$ & $A_2(h)$ & $B_2(h)$ & $C_2(h)$\\
\midrule
\endhead
\bottomrule
\endfoot
$(0,0)$ & $(0,0)$ & $(0,0)$ & $(0,0)$ & $(0,0)$ & $(0,0)$ & $(0,0)$ & $(0,0)$ & $(0,0)$\\
$(0,1)$ & $(1,0)$ & $(1,1)$ & $(1,2)$ & $(0,1)$ & $(5,2)$ & $(8,0)$ & $(5,1)$ & $(3,1)$\\
$(0,2)$ & $(7,0)$ & $(7,2)$ & $(7,1)$ & $(0,2)$ & $(5,1)$ & $(2,0)$ & $(5,2)$ & $(6,2)$\\
$(1,0)$ & $(3,1)$ & $(7,1)$ & $(2,1)$ & $(4,0)$ & $(0,2)$ & $(4,2)$ & $(8,2)$ & $(4,0)$\\
$(1,1)$ & $(6,1)$ & $(1,2)$ & $(5,0)$ & $(4,1)$ & $(8,1)$ & $(6,2)$ & $(7,0)$ & $(7,1)$\\
$(1,2)$ & $(0,1)$ & $(4,0)$ & $(8,2)$ & $(4,2)$ & $(4,2)$ & $(8,1)$ & $(3,0)$ & $(4,2)$\\
$(2,0)$ & $(1,1)$ & $(6,1)$ & $(8,1)$ & $(5,0)$ & $(6,1)$ & $(5,1)$ & $(4,1)$ & $(8,0)$\\
$(2,1)$ & $(4,1)$ & $(0,2)$ & $(2,0)$ & $(5,1)$ & $(4,0)$ & $(6,1)$ & $(2,2)$ & $(2,1)$\\
$(2,2)$ & $(5,1)$ & $(1,0)$ & $(3,2)$ & $(5,2)$ & $(1,0)$ & $(3,2)$ & $(8,1)$ & $(2,2)$\\
$(3,0)$ & $(7,1)$ & $(4,1)$ & $(4,1)$ & $(6,0)$ & $(2,0)$ & $(5,0)$ & $(8,0)$ & $(3,0)$\\
$(3,1)$ & $(2,1)$ & $(8,2)$ & $(8,0)$ & $(6,1)$ & $(1,2)$ & $(7,0)$ & $(7,1)$ & $(6,1)$\\
$(3,2)$ & $(8,1)$ & $(5,0)$ & $(5,2)$ & $(6,2)$ & $(1,1)$ & $(1,0)$ & $(7,2)$ & $(0,2)$\\
$(4,0)$ & $(4,2)$ & $(5,2)$ & $(0,2)$ & $(1,0)$ & $(7,2)$ & $(5,2)$ & $(3,2)$ & $(7,0)$\\
$(4,1)$ & $(7,2)$ & $(8,0)$ & $(3,1)$ & $(1,1)$ & $(0,1)$ & $(1,2)$ & $(5,0)$ & $(1,1)$\\
$(4,2)$ & $(1,2)$ & $(2,1)$ & $(6,0)$ & $(1,2)$ & $(6,2)$ & $(4,1)$ & $(2,0)$ & $(7,2)$\\
$(5,0)$ & $(2,2)$ & $(4,2)$ & $(6,2)$ & $(2,0)$ & $(7,1)$ & $(0,1)$ & $(2,1)$ & $(2,0)$\\
$(5,1)$ & $(5,2)$ & $(7,0)$ & $(0,1)$ & $(2,1)$ & $(5,0)$ & $(1,1)$ & $(0,2)$ & $(5,1)$\\
$(5,2)$ & $(6,2)$ & $(8,1)$ & $(1,0)$ & $(2,2)$ & $(6,0)$ & $(2,2)$ & $(1,1)$ & $(5,2)$\\
$(6,0)$ & $(8,2)$ & $(2,2)$ & $(2,2)$ & $(3,0)$ & $(7,0)$ & $(4,0)$ & $(1,0)$ & $(6,0)$\\
$(6,1)$ & $(3,2)$ & $(6,0)$ & $(6,1)$ & $(3,1)$ & $(3,2)$ & $(3,0)$ & $(6,1)$ & $(0,1)$\\
$(6,2)$ & $(0,2)$ & $(3,1)$ & $(3,0)$ & $(3,2)$ & $(3,1)$ & $(6,0)$ & $(6,2)$ & $(3,2)$\\
$(7,0)$ & $(5,0)$ & $(3,0)$ & $(7,0)$ & $(7,0)$ & $(8,2)$ & $(0,2)$ & $(1,2)$ & $(1,0)$\\
$(7,1)$ & $(2,0)$ & $(0,1)$ & $(4,2)$ & $(7,1)$ & $(4,1)$ & $(8,2)$ & $(6,0)$ & $(4,1)$\\
$(7,2)$ & $(8,0)$ & $(6,2)$ & $(1,1)$ & $(7,2)$ & $(2,2)$ & $(3,1)$ & $(4,0)$ & $(1,2)$\\
$(8,0)$ & $(3,0)$ & $(2,0)$ & $(4,0)$ & $(8,0)$ & $(2,1)$ & $(7,1)$ & $(3,1)$ & $(5,0)$\\
$(8,1)$ & $(6,0)$ & $(5,1)$ & $(7,2)$ & $(8,1)$ & $(3,0)$ & $(2,1)$ & $(4,2)$ & $(8,1)$\\
$(8,2)$ & $(4,0)$ & $(3,2)$ & $(5,1)$ & $(8,2)$ & $(8,0)$ & $(7,2)$ & $(0,1)$ & $(8,2)$\\
\end{longtable}

\end{document}